\numberwithin{equation}{section}
 \newtheorem{theorem}{Theorem}[section]
 \newtheorem{lemma}[theorem]{Lemma}
 \newtheorem{proposition}[theorem]{Proposition}
 \theoremstyle{assumption}
   \newtheorem{assumption}[theorem]{Assumption}
\theoremstyle{definition}
\newtheorem{definition}[theorem]{Definition}
 \theoremstyle{remark}
\newtheorem{remark}[theorem]{Remark}
 \newcommand{\eps}{\varepsilon}
\newcommand{\norm}[1]{\Vert#1\Vert}
 \newcommand{\abs}[1]{\left\vert#1\right\vert}
 \newcommand{\inner}[1]{\left(#1\right)}
\newcommand{\comi}[1]{\left<#1\right>}
 \newcommand{\normm}[1]{{ \vert\kern-0.25ex \vert\kern-0.25ex \vert #1 
     \vert\kern-0.25ex \vert\kern-0.25ex \vert}}
\def\@startsection#1#2#3#4#5#6{%
 \if@noskipsec \leavevmode \fi
 \par \@tempskipa #4\relax
 \@afterindentfalse
 \ifdim \@tempskipa <\z@ \@tempskipa -\@tempskipa \@afterindentfalse\fi
 \if@nobreak \everypar{}\else
     \addpenalty\@secpenalty\addvspace\@tempskipa\fi
 \@ifstar{\@dblarg{\@sect{#1}{\@m}{#3}{#4}{#5}{#6}}}%
         {\@dblarg{\@sect{#1}{#2}{#3}{#4}{#5}{#6}}}%
}
\def\@settitle{%
  \bgroup
  \centering
  \vglue1cm
  \fontsize{12}{15}\fontseries{b}\selectfont
  \uppercasenonmath\@title
  \@title
  \vskip20pt plus 6pt minus 8pt
  \egroup
}
\def\@setauthors{%
  \begingroup
  \trivlist
  \centering \bfseries
 \normalsize\@topsep30\p@\relax
  \advance\@topsep by -\baselineskip
  \item\relax
  \andify\authors
 {\rmfamily\authors}%
  \endtrivlist
  \endgroup
}
\def\@setaddresses{\par
  \nobreak \begingroup
\normalsize
  \def\author##1{\nobreak\addvspace\bigskipamount}%
  \def\\{\unskip, \ignorespaces}%
  \interlinepenalty\@M
  \def\address##1##2{\begingroup
    \par\addvspace\bigskipamount\noindent
    \@ifnotempty{##1}{(\ignorespaces##1\unskip) }%
    {\ignorespaces##2}\par\endgroup}%
  \def\curraddr##1##2{\begingroup
    \@ifnotempty{##2}{\nobreak\indent{\itshape Current address}%
      \@ifnotempty{##1}{, \ignorespaces##1\unskip}\/:\space
      ##2\par}\endgroup}%
  \def\email##1##2{\begingroup
    \@ifnotempty{##2}{\nobreak\noindent{\itshape E-mail address}%
      \@ifnotempty{##1}{, \ignorespaces##1\unskip}\/: 
       ##2\par}\endgroup}%
   \def\urladdr##1##2{\begingroup
    \@ifnotempty{##2}{\nobreak\indent{\itshape URL}%
      \@ifnotempty{##1}{, \ignorespaces##1\unskip}\/:\space
      \ttfamily##2\par}\endgroup}%
  \addresses
  \endgroup
}
 \renewcommand\section{\@startsection{section}{1}{\z@}%
{27pt plus 6pt minus 8pt}{14pt plus 6pt minus 8pt}
{\center\normalfont\large\bfseries}}
\renewcommand\subsection{\@startsection{subsection}{2}{\z@}%
{27pt plus 6pt minus 8pt}{14pt plus 6pt minus 8pt}
{ \center
\normalfont\bfseries}}
\def\subsubsection{\@startsection{subsubsection}{3}%
  \z@{.5\linespacing\@plus.7\linespacing}{-.5em}%
  {\normalfont\itshape}}
 \def\@mainsize{10}\def\@ptsize{0}%
  \def\@typesizes{%
    \or{5}{6}\or{6}{7}\or{7}{8}\or{8}{9}\or{9}{11}%
    \or{10}{13}
    \or{\@xipt}{13}\or{\@xiipt}{14}\or{\@xivpt}{17}%
    \or{\@xviipt}{20}\or{\@xxpt}{24}}%
\begin{document}

\title[Well-posedness  of MHD Boundary layer]{Well-posedness of the MHD boundary layer system in Gevrey function space without   Structural Assumption}

\author[W.-X. Li and T. Yang]{ Wei-Xi Li \and Tong Yang}

\date{}

\address[W.-X. Li]{ School of Mathematics and Statistics, Wuhan University, Wuhan 430072, China \& Hubei Key Laboratory of Computational Science, Wuhan University, Wuhan 430072, China
  }

\email{
wei-xi.li@whu.edu.cn}

\address[T.Yang]{
Department of Mathematics, City University of Hong Kong, Hong Kong
  }

\email{
matyang@cityu.edu.hk}

\begin{abstract}
 We establish the well-posedness of  the MHD boundary layer system in Gevrey function space  without any structural assumption.  Compared   to the classical Prandtl equation,    the loss of  tangential derivative comes
  from both the velocity and magnetic fields that are coupled with each other.  By observing 
   a new type of  cancellation mechanism in the system for  overcoming the loss derivative degeneracy, we show
   that the MHD boundary layer system is well-posed with Gevrey index up to  $3/2$ in both two and three dimensional
   spaces.      
\end{abstract}

\subjclass[2010]{35Q35, 76W05}
\keywords{MHD boundary layer, non-structual  assumption, cancellation, well-posedness theory,  Gevrey class}

 \maketitle


\section{Introduction}

Magnetohydrodynamic (MHD)  is concerned with the motion of conducting fluid under the influence of the self-induced
magentic field. In the incompressible framework, the governing equations are
\begin{equation}\label{mhdsys}
\left\{
	\begin{aligned}
		&\partial_t \bm  u +(\bm  u \cdot\nabla) \bm u-(\bm B \cdot\nabla) \bm B +\nabla   P -\frac{1}{\mathrm {Re}} \Delta \bm u  =0,\\
		&\partial_t \bm  B  -\nabla\times(\bm u\times\bm B) -\frac{1}{\mathrm{Rm}} \Delta \bm B=0,\\
		&\nabla\cdot \bm u=\nabla\cdot \bm B=0,\\
		&{\bm u}|_{t=0}={\bm u_0},\quad {\bm B}|_{t=0}={\bm B_0},
	\end{aligned}
\right.
\end{equation}
where  Re and Rm stand for the hydrodynamic and magnetic Reynolds numbers, respectively.     The MHD system is well-explored when the fluid region is the whole space, seeing for instance the survey paper \cite{lin} and the references therein.      Here we assume that the fluid  is  in the half-space 
  $\mathbb R_+^d=\big\{(x_1,\cdots,x_d)\in\mathbb R^d \ | \ x_d>0\big\}$ with   $d=2$ or $d=3$,
and  the system \eqref{mhdsys} is equipped with  the no-slip boundary condition
    on the velocity  field and  perfectly conducting boundary condition   on the magnetic field, that is,
 \begin{eqnarray*}
 	\bm u|_{x_d=0}=\bm 0, \quad \bm (\partial_{x_d}\bm B_h, \bm B_{x_d}) |_{x_d=0}=\bm 0, 
 \end{eqnarray*}
 where $\bm B_h, \bm B_{x_d}$ represent the tangential and normal components of $\bm B$, respectively. 
In this work we will investigate the  well-posedness  of the MHD  boundary layer system derived
 from the   high Reynolds numbers limit of the MHD system \eqref{mhdsys}.   More precisely, when the hydrodynamic and magnetic Reynolds numbers  are of  the same order,  i.e.,  1/Re $=\nu\eps$ and  1/Rm $=\mu\eps$ for $\eps \ll 1$,   the following MHD boundary layer system  was derived in \cite{MR3882222} (cf. the work \cite{MR3657241} for the  derivation with the  insulating boundary condition     on magnetic field):
 \begin{equation}\label{dimmhd}
	\left\{
	\begin{aligned}
		& \big(\partial_t+ \vec{u} \cdot \nabla    -\nu\partial_z^2\big) u_h-(\vec{f} \cdot \nabla)f_h+\nabla_h p =0,\\
		& \partial_t \vec f -\nabla\times (\vec u \times \vec f\,)      -\mu\partial_z^2\vec  f =0, \\
		&\textrm{div}\ \vec u=\textrm{div}\ \vec f=0,\\
		& \vec u |_{z=0}=(\partial_zf_h, f_z)|_{z=0}=\bm 0, \quad (u_h, f_h)|_{z\rightarrow +\infty}=(\bm U,  \bm F),\\
		&  u_h|_{t=0}=  {u}_{h,0},\quad  f_h|_{t=0}=f_{h,0},
	\end{aligned}
	\right.
\end{equation}
where   $x_h\in\mathbb R^{d-1}$ is the tangential component of $(x_h, z)\in\mathbb R_+^d$ and we use the notation $\nabla=\inner{\nabla_h, \partial_z} $ with $\nabla_h=\partial_{x_h}=(\partial_{x_1},\cdots,\partial_{x_{d-1}})$,  and denote  by $\vec u=(u_h,u_z)$ and $\vec f=(f_h, f_z)$   the velocity and magnetic fields respectively,  with the tangential components $u_h, f_h$ and the normal components $u_z, f_z$.       Here    $p$, $\bm U $ and $  \bm F$ are given functions in $(t,x_h)$ variables satisfying  the Bernoulli's law
\begin{eqnarray*}
\left\{
\begin{aligned}
& \partial_t  \bm U + (\bm U\cdot\nabla_h ) \bm U -(\bm F\cdot\nabla_h ) \bm F +\nabla_h p=0,\\
& \partial_t \bm F  + (\bm U\cdot\nabla_h) \bm F -(\bm F\cdot\nabla_h ) \bm U =0.
\end{aligned}
\right.
\end{eqnarray*}
In view of  the  divergence free and boundary  conditions we can write the normal components $u_z$ and $f_z$ as
\begin{eqnarray*}
	u_z(t,x_h, z)=-\int_0^z \nabla_h\cdot u_h(t,x_h,\tilde z) d\tilde z, \quad f_z(t,x_h, z)=-\int_0^z \nabla_h\cdot f_h(t,x_h,\tilde z) d\tilde z.
\end{eqnarray*} 
Thus the MHD boundary layer   system \eqref{dimmhd} is a degenerate  system with the loss of tangential derivatives
  in $f_z$ and $u_z$ as non-local terms.  Note  the equation  for  $f_z$  in \eqref{dimmhd} is just an immediate consequence of those for $f_h$, in view of the representation of $f_z$ given above.     The degeneracy coupled with the non-local property is the main difficulty in studying the well-posedness of this system. 

In the absence of magnetic field,
the MHD   system is  reduced to  the classical incompressible  Navier-Stokes equations,  and the
corresponding  boundary layer system \eqref{dimmhd}   is  the classical Prandtl equation  derived by Prandtl in 1904.
     The mathematical study on   the Prandtl boundary layer has a  long history, and  there have been extensive works concerning its well/ill-posedness theories.   So far  the two-dimensional  (2D) Prandtl equation  is  well-explored in various function spaces,  see e.g. \cite{MR3327535, MR3795028,  MR3925144,  MR2601044, MR1476316, MR2049030,MR2849481,  MR3461362, MR3284569, MR2975371,  MR3590519, MR3493958,  MR2020656, MR3710703,MR3464051} and the references therein. 
  Among these works we can see that there are basically two main settings based on  whether or not  the structural assumption is imposed. One refers to  Oleinik's monotonicity  condition and  another one to the analytic or Gevrey class.  Under Oleinik's monotonicity,  the well-posedness in function space with finite order of regularity
  was firstly achieved by Oleinik (see e.g. \cite{MR1697762})  by using the Crocco transformation, and  was 
  recently proved by two research groups \cite{MR3327535,MR3385340} independently with  new understanding on
   cancellation mechanism through 
  energy method. Hence, the loss of one order tangential  derivative can be overcome by using either Crocco transformation or cancellation mechanism with the monotonicity condition.
  
    Without any structural assumption,  it is natural to introduce  the analytic function space to overcome  the loss of one order derivative by shrinking the  radius of analyticity in time, cf.  \cite{MR1617542}  and the later improvement in  \cite{MR2975371,MR2049030} that hold in both 2D and 3D.  
  Recently some new idea of cancellation was observed in \cite{MR3925144}  to establish the well-posedness in Gevrey function space with index up to $2$ rather than in analytic setting for the 2D Prandtl equation.  
   Compared to the 2D case, much less is known for 3D Prandtl equation outside the
   analytic framework. Here,    we refer to \cite{MR3600083}  for the existence of classical solutions based on some structural assumption such that the secondary flow does not appear,
    and  the work \cite{2018arXiv180404285L} about the wild solution to this system. Recently, 
  the well-posedness in Gevrey space with the Gevrey index up to $2$ was obtained  in \cite{lmy} for 3D Prandtl equations without any structural assumption,  inspired by   the work   \cite{MR3925144} for 2D.  
    Note that  the Gevrey index $2$ in \cite{MR3925144,lmy}  is optimal  in view of the ill-posedness theory  in \cite{MR2601044, MR3458159}.   
    
    Finally,  let us also mention the work \cite{MR2020656} on the global existence of weak solutions under an additional favorable pressure condition and the work \cite{2019arXiv191103690P} on the existence of global solutions in analytic function space for small initial data.  All these results are in fact related to the high Reynolds number limit for the  purely hydrodynamic flow with  physical boundary conditions, and  to show that    the Navier-Stokes equations can
    be approximaged  by the Euler equation away from boundary and  by the Prandtl equation near the boundary.  The  mathematically rigorous  justification of  the limit was obtained by \cite{MR3614755, MR1617538} in the analytic 
    function space without any structural assumption. And there is a significant improvement to the Gevrey setting  in \cite{MR3855356, 2020arXiv200505022G} with    some kind of concave condition  on the Prandtl boundary layer profile.  If the  initial vorticity  is supported away from the boundary then the limit in  $L^\infty$ norm was established in \cite{MR3207194} and  \cite{MR3774877} respectively for 2D and 3D cases.  The aforementioned works on the inviscid limit  are concerned with  the time dependent problem. On the other hand,  for steady flow  we refer to \cite{MR3961300, 2018arXiv180505891G,2019arXiv190308086G,  MR3634071,MR4081398,MR4060005} and references therein for the study of  the inviscid limit in Sobolev or $L^\infty$ setting.   

Back to MHD system,  we  have new difficulty  caused by the additional  loss of  
tangential derivative in the magnetic field.   With some structural condition,  the stabilizing effect of   magnetic field on the boundary layer
has been observed,  see e.g. \cite{MR3657241, MR4102162, MR3975147, MR3882222}. Precisely, under the
assumption on the non-degenerate tangential magnetic field,   the well-posedness of MHD boundary layer in Sobolev space together with the justification of the high Reynolds numbers limit were
 obtained in \cite{MR3882222, MR3975147} without  Oleinik's monotonicity condition on
the velocity field. 
 On the other hand, the magnetic field may act as a destabilizing factor and  lead to  the boundary layer separation, cf. \cite{Nu} .     
Inspired by the well-posedness theory established in \cite{MR3925144,lmy} for the Prandtl equation,   
this paper aims to investigate the well-posedness of the MHD boundary layer without any structural assumption
in the Gevrey function space.   For this,  we need to  explore  other  intrinsic cancellation 
mechanism to overcome the additional  loss of tangential derivative in the magnetic field coupled with
the velocity field.

 To simply the argument we will assume without loss of generality   that  $(\bm U, \bm F)\equiv \bm 0$ in  the system \eqref{dimmhd} because the result  holds in the general case if we use some kind of the non-trivial weighted
 functions similar to those used in \cite{MR3925144} for Prandtl equation.   Hence, we consider
  \begin{equation}\label{3dimmhd}
	\left\{
	\begin{aligned}
		& \big(\partial_t+ \vec{u} \cdot \nabla    -\nu\partial_z^2\big) u_h-(\vec{f} \cdot \nabla)f_h =0,\\
	& \partial_t \vec f -\nabla\times (\vec u \times \vec f\,)      -\mu\partial_z^2\vec  f =0, \\
			&\textrm{div}\ \vec u=\textrm{div}\ \vec f=0,\\
		& \vec u |_{z=0}=(\partial_z f_h, f_z)|_{z=0}=\bm 0, \quad (u_h, f_h)|_{z\rightarrow +\infty}=\bm 0,\\
		& u_h|_{t=0}=  {u}_{h,0},\quad  {f}_h|_{t=0}= {f}_{h,0}.
	\end{aligned}
	\right.
\end{equation}

For clear presentation, let us first introduce the Gevrey  function spaces used in this paper.

  \begin{definition} 
\label{defgev}  Let $\ell\geq 1 $ be a given number.   With a given integer $N\geq 0$ and a pair $(\rho,\sigma)$, $\rho>0$ and $\sigma\geq 1, $  a Banach space $X_{\rho,\sigma,N}$   consists of all  smooth  vector-valued functions
 $\bm A=\bm A(x_h,z)$ with $(x_h,z)\in\mathbb R_+^{d}$  such that the Gevrey norm  $\norm{\bm A}_{\rho,\sigma,N}<+\infty,$  where    $\norm{\cdot}_{\rho,\sigma,N}$ is defined below.   Denote $\partial_{x_h}^\alpha =\partial_{x_1}^{\alpha_1}\cdots\partial_{x_{d-1}}^{\alpha_{d-1}}$ and define
\begin{eqnarray*}	
 \norm{\bm A}_{\rho,\sigma,N}= \sup_{\stackrel{0\leq j\leq N}{\abs\alpha+j\geq 7}} \frac{\rho^{\abs\alpha+j- 7}}{[\inner{\abs\alpha+j- 7}!]^{\sigma}}  \big\|\comi z^{\ell+j} \partial_{x_h}^\alpha \partial_z^j  \bm A \big\|_{L^2(\mathbb R_+^d)}
	 +\sup_{\stackrel{0\leq j\leq N}{\abs\alpha+j\leq 6}}  \big\|\comi z^{\ell+j} \partial_{x_h}^\alpha \partial_z^j  \bm A \big\|_{L^2(\mathbb R_+^d)} ,
\end{eqnarray*}
where  $\comi z=(1+\abs z^2)^{1/2}$ and 
\begin{eqnarray*}
	\norm{\bm A}_{L^2(\mathbb R_+^d)}\stackrel{\rm def}{=}\Big(\sum_{1\leq j\leq k}\norm{A_j}_{L^2(\mathbb R_+^d)}^2\Big)^{1/2}
\end{eqnarray*}
  for $\bm A=(A_1,\cdots, A_k)$.  Here,  $\sigma$ is the Gevrey index. 
\end{definition}

And the main theorem in this paper can be stated as follows.

\begin{theorem} Let the dimension $d=2$ or $3$.   
	Suppose the initial data $(u_{h,0}, f_{h,0}) $ in the system \eqref{3dimmhd} belong to $X_{2\rho_0, \sigma, 8}$  for some  $1< \sigma\leq 3/2$ and some $0<\rho_0 \leq 1$,  compatible with  the boundary condition.  
	 Then the system \eqref{3dimmhd} admits a unique solution $(u_h,f_h) \in L^\infty\big([0,T];~X_{\rho,\sigma,4}\big)$ for some $T>0$ and some $0<\rho<2\rho_0.$ 
\end{theorem}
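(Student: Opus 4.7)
The plan is to adapt the cancellation-plus-Gevrey-radius technique developed for the Prandtl equation in \cite{MR3925144, lmy} to the coupled velocity--magnetic system, introducing a pair of new auxiliary unknowns that simultaneously compensate for the loss of one tangential derivative in both non-local components $u_z=-\int_0^z\nabla_h\cdot u_h\,d\tilde z$ and $f_z=-\int_0^z\nabla_h\cdot f_h\,d\tilde z$. As a preliminary, using the two divergence-free conditions and the identity $\nabla\times(\vec u\times\vec f)=(\vec f\cdot\nabla)\vec u-(\vec u\cdot\nabla)\vec f$, I would rewrite the tangential part of \eqref{3dimmhd} in the symmetric advection--stretching form
\begin{equation*}
	\partial_t u_h+(\vec u\cdot\nabla)u_h-(\vec f\cdot\nabla)f_h-\nu\partial_z^2 u_h=0,\qquad
	\partial_t f_h+(\vec u\cdot\nabla)f_h-(\vec f\cdot\nabla)u_h-\mu\partial_z^2 f_h=0,
\end{equation*}
so that the dangerous contributions $u_z\partial_z u_h$, $f_z\partial_z f_h$ appear in the first equation and $u_z\partial_z f_h$, $f_z\partial_z u_h$ in the second.

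Next, I would introduce good unknowns of the form $g=\partial_z u_h+\mathcal L_1(u_h,f_h)$ and $h=\partial_z f_h+\mathcal L_2(u_h,f_h)$, where $\mathcal L_1,\mathcal L_2$ are multiplication operators whose scalar coefficients are built from the traces $\partial_z u_h|_{z=0}$, $\partial_z f_h|_{z=0}$ and their tangential derivatives. The new cancellation announced in the abstract would consist in choosing $\mathcal L_1,\mathcal L_2$ so that the parabolic equations satisfied by $(g,h)$ no longer contain the worst non-local contributions $u_z\partial_z^2 u_h$, $f_z\partial_z^2 f_h$, $u_z\partial_z^2 f_h$, $f_z\partial_z^2 u_h$. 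Unlike the scalar Prandtl case, where a single auxiliary unknown reduces the loss to $\sqrt{k}$ and yields Gevrey index $2$, the coupling between the velocity and magnetic equations should produce a residual loss of fractional order $k^{2/3}$ with $k=\abs\alpha+j$, which is exactly what the Gevrey index $3/2$ can absorb; no structural assumption is needed because the cancellation is purely algebraic.

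Third, I would carry out a weighted Gevrey energy estimate on the augmented state $(u_h,f_h,g,h)$ in the norm $\norm{\cdot}_{\rho(t),3/2,N}$ of Definition~\ref{defgev} with a time-decaying radius $\rho(t)$. Applying $\partial_{x_h}^\alpha\partial_z^j$ to each equation, multiplying by $\comi z^{2(\ell+j)}\partial_{x_h}^\alpha\partial_z^j(\cdot)$ and integrating by parts would expose the parabolic dissipation $\nu\norm{\comi z^{\ell+j}\partial_{x_h}^\alpha\partial_z^{j+1}u_h}_{L^2}^2$ and its magnetic analogue; combining with the cancellation identities should lead to a differential inequality of the schematic form
\begin{equation*}
	\tfrac12\frac{d}{dt}E(t)+D(t)\leq CE(t)^{3/2}+\dot\rho(t)\,\mathcal R(t),
\end{equation*}
where $E(t)=\norm{(u_h,f_h,g,h)}_{\rho(t),3/2,N}^2$, $D$ collects the $\partial_z$ dissipation, and $\mathcal R$ encodes the residual $k^{2/3}$ loss. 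Choosing $-\dot\rho(t)$ comparable to $E(t)^{1/2}$ would absorb $\mathcal R$ into $D$ and produce a closed Riccati-type inequality for $E$, delivering local control on some $[0,T]$ with $0<\rho(T)<2\rho_0$.

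Existence would then follow by applying the a priori bound to a tangentially smoothed parabolic approximation of \eqref{3dimmhd} (for instance by adding $\eps\Delta_{x_h}$ terms) and passing to the limit via weak-$*$ compactness, while uniqueness in $L^\infty\bigl([0,T];X_{\rho,3/2,4}\bigr)$ would come from the same energy scheme applied to the difference of two solutions. The hardest step, and the one on which I would concentrate, is the identification of the pair $(\mathcal L_1,\mathcal L_2)$ and the verification that after this substitution all the leading non-local terms cancel up to commutators with only a fractional tangential loss; the mutual coupling of velocity and magnetic field makes the combinatorial bookkeeping in the Sobolev--Gevrey sum in Definition~\ref{defgev} strictly harder than in \cite{MR3925144,lmy}, and it is precisely the resulting $k^{2/3}$ growth that explains why the index $3/2$, rather than the Prandtl threshold $2$, is the natural one in this framework.
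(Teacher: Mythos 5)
Your proposed good unknowns $g=\partial_z u_h+\mathcal L_1(u_h,f_h)$, with $\mathcal L_1,\mathcal L_2$ multiplication operators built from the boundary traces of $\partial_z u_h,\partial_z f_h$, do not correspond to the mechanism the paper actually uses, and I do not see how they could work without a structural hypothesis. The trouble in \eqref{3dimmhd} is not the terms $u_z\partial_z^2 u_h$, $f_z\partial_z^2 f_h$ that you list: after applying $\partial_{x_h}^\alpha$ those are transport/diffusion contributions that integrate harmlessly by parts. What loses a derivative is the pair $(\partial_{x_h}^\alpha u_z)\partial_z u_h$ and $(\partial_{x_h}^\alpha f_z)\partial_z f_h$ (and their cross-terms in the $f_h$ equation), since $u_z,f_z$ are $z$-primitives of $\nabla_h\cdot u_h,\nabla_h\cdot f_h$. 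Correcting $\partial_z u_h$ by a trace-based multiplication operator does not touch these; the classical trick $\partial_z u-\frac{\partial_z^2 u}{\partial_z u}u$ that does remove $(\partial_x^\alpha u_z)\partial_z u$ requires monotonicity, which is precisely what is being avoided.

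What the paper does instead is quite different in two respects. First, to kill the velocity-side loss it follows \cite{lmy}: it introduces a new unknown $\mathcal U$ solving a linear parabolic initial–boundary problem \eqref{mau} with source $-\partial_x w$, and then forms $\lambda=\partial_x u-(\partial_z u)\int_0^z\mathcal U\,d\tilde z$ so that in the quantity $\psi_m=\partial_x^m u-(\partial_z u)\int_0^z\partial_x^{m-1}\mathcal U\,d\tilde z$ the term $(\partial_x^m w)\partial_z u$ cancels exactly. Note that $\mathcal U$ is \emph{not} a trace-built multiplier and $\lambda$ corrects a \emph{tangential} derivative, not $\partial_z u$. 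Second — and this is the new ingredient for MHD — the loss coming from $h=f_z$ is handled by passing to the magnetic convection fields $\xi=(f\partial_x+h\partial_z)f$, $\eta=(f\partial_x+h\partial_z)u$, which close on themselves: the evolution of $\eta$ produces $(f\partial_x+h\partial_z)\xi$ plus commutators, the evolution of $\xi$ produces $(f\partial_x+h\partial_z)\eta$ plus commutators, and the dangerous $\partial_x w$ contribution cancels algebraically in \eqref{sypv}. Taking the symmetric $L^2$ pairing of the two equations then annihilates the $(f\partial_x+h\partial_z)$ transport terms by the divergence-free condition on the magnetic field — this is the ``new cancellation'' the abstract refers to, and it appears nowhere in your proposal. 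Your appeal to a generic ``$k^{2/3}$ residual loss'' is also left unexplained; in the paper the $3/2$ threshold emerges concretely from the three-rung ladder $\mathcal U\to\lambda\to\xi$ with the weights $1$, $m^{1/2}$, $m$ built into the norm \eqref{devea}, each rung costing half a power of $m$ that must be absorbed by the Gevrey factorials, which forces $\sigma\leq 3/2$ in estimates like the one for $\int_0^t(\partial_x^{m+1}\lambda,\partial_x^m\mathcal U)_{L^2}\,ds$ in the proof of Proposition~\ref{prpu}. The outer framework you sketch — regularization by $\eps\Delta_{x_h}$, uniform bounds in a shrinking Gevrey radius, weak-$*$ compactness, energy uniqueness — is the right shell, but the content of the key lemma, the construction of the correct auxiliary unknowns together with the identification of the $\xi$--$\eta$ cancellation, is missing.
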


 Note that  the instability result in  \cite{MR3864769} suggests $\sigma=2$ may be the optimal Gevrey index for the well-posedness theory of the  MHD boundary layer without structural assumption similar to the classical
 Prandt equation.  Hence, it remains 
 an interesting problem to establish a well-posedness theory in Gevrey function space with optimal index.   

The rest of the paper is organized as follows.      For clear presentation, we will  prove  in Section  \ref{secapri} the well-posedness of the  2D MHD boundary layer system.   The discussion on  3D MHD will be given in Section \ref{sec3d} by pointing out the difference.

\section{2D MHD boundary layer}\label{secapri}

For the 2D MHD boundary layer system,  we will use $(u, w) $ and $(f,h)$ to denote the velocity and magnetic fields respectively, and denote by $(x,z)\in\mathbb R_+^2$ the spatial variable.   Then  the MHD boundary layer system \eqref{3dimmhd}   is  
\begin{eqnarray*}
	\left\{
	\begin{aligned}
		&\big(\partial_t  +u\partial_x +w\partial_z -\nu\partial_z^2\big) u-(f\partial_x+h\partial_z )f=0,\\
		&\partial_tf+\partial_z(wf-uh)-\mu\partial_z^2f=0,\\
		&\partial_th-\partial_x(wf-uh)-\mu\partial_z^2h=0,
	\end{aligned}
	\right.
\end{eqnarray*}
with the divergence free 
 and  initial-boundary conditions
\begin{equation}\label{inbc}
	\left\{
	\begin{aligned}
		&\partial_xu+\partial_zw=\partial_xf+\partial_zh=0,\\
		& (u,w)|_{z=0}=(\partial_zf,h)|_{z=0}=(0,0), \quad (u, f)|_{z\rightarrow +\infty}=(0,0),\\
		& (u,f)|_{t=0}=(u_0, f_0).
	\end{aligned}
	\right.
\end{equation}
By \eqref{inbc},  we can rewrite  \eqref{3dimmhd} as
\begin{equation}
	\label{mhd+}
\left\{
	\begin{aligned}
		&\big(\partial_t  +u\partial_x +w\partial_z -\nu\partial_z^2\big) u=\xi,\\
		&\big(\partial_t  +u\partial_x +w\partial_z   -\mu\partial_z^2\big) f= \eta,\\
		&\big(\partial_t  +u\partial_x +w\partial_z   -\mu\partial_z^2\big) h= f\partial_xw -h\partial_xu,
	\end{aligned}
	\right.
\end{equation}
where 
\begin{equation}\label{varpsi}
	  \xi=(f\partial_x+h\partial_z)f,\quad \eta=(f\partial_x+h\partial_z)u.
\end{equation}  
Note 
\begin{eqnarray*}
	w(t,x,z)=-\int_0^z \partial_x u(t,x,\tilde z) d\tilde z,\quad h(t,x,z)=-\int_0^z \partial_x f(t,x,\tilde z) d\tilde z .
\end{eqnarray*}
 We remark that the  equation for $h$   in \eqref{mhd+}  can be derived from the one for $f$
 and    the main difficulty in analysis is the loss of $x$-derivatives in the two terms $w$ and $h.$    

The existence and uniqueness theory for \eqref{mhd+} can be stated as follows. 

\begin{theorem}\label{th2d}
	Suppose $(u_{0}, f_{0}) \in X_{2\rho_0, \sigma,8}$  for some  $1<\sigma\leq 3/2$ and $0<\rho_0 \leq 1$, compatible with  the boundary condition in \eqref{inbc}.	 Then the system \eqref{mhd+} with the condition \eqref{inbc}, admits a unique solution $(u,f) \in L^\infty\big([0,T];~X_{\rho,\sigma,4}\big)$ for some $T>0$ and some $0<\rho<2\rho_0.$
	 \end{theorem}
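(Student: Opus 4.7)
\medskip

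\textbf{Proof plan.}
The plan is to close an \emph{a priori} Gevrey energy estimate for the system \eqref{mhd+}, and then to construct the solution by standard parabolic regularization. The central difficulty is to identify an intrinsic cancellation that compensates the loss of one tangential derivative hidden in the non-local terms $w=-\int_0^z\partial_x u\,d\tilde z$ and $h=-\int_0^z\partial_x f\,d\tilde z$, since no structural assumption (such as monotonicity of $u$ or non-degeneracy of the tangential magnetic field) is available.

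My first step would be to symmetrize the system via the Elsässer-type unknowns $U_\pm=u\pm f$ and $W_\pm=w\pm h=-\int_0^z\partial_x U_\pm\,d\tilde z$. Adding and subtracting the $u$- and $f$-equations in \eqref{mhd+} yields the decoupled transport form
\begin{equation*}
\partial_t U_\pm+U_\mp\partial_x U_\pm+W_\mp\partial_z U_\pm=\nu\partial_z^2 u\pm\mu\partial_z^2 f,
\end{equation*}
so that each $U_\pm$ is convected by the \emph{opposite} field $(U_\mp,W_\mp)$. The key structural observation is that after differentiating this equation $\alpha$ times in $x$, the top-order non-local contribution $(\partial_x^\alpha W_\mp)\partial_z U_\pm$ involves $\partial_x^{\alpha+1}U_\mp$ only, and \emph{not} $\partial_x^{\alpha+1}U_\pm$. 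A naïve single-equation energy estimate is still destined to fail, but a coupled estimate for the pair $(U_+,U_-)$ now has a chance of closing.

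Following the cancellation philosophy developed in \cite{MR3925144,lmy}, I would then introduce modified good unknowns of the schematic form $G_\alpha^\pm=\partial_x^\alpha U_\pm-(\partial_z U_\pm)\,\Phi_\alpha^\mp$, where the corrector $\Phi_\alpha^\mp$ is a normalized $z$-primitive built from $\partial_x^{\alpha-1}U_\mp$. Using the equations satisfied by $\partial_z U_\pm$ and by $\Phi_\alpha^\mp$, one checks that the worst non-local term $(\partial_x^\alpha W_\mp)\partial_z U_\pm$ is cancelled exactly in the evolution equation for $G_\alpha^\pm$, while the surviving commutator contributions lose only a fractional amount of tangential derivative; this is the new cancellation mechanism announced in the abstract. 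The energy estimate for $G_\alpha^\pm$ is then carried out with the $\comi z^{\ell+j}$ weight on $\partial_z^j$ for $0\le j\le 4$ and the Gevrey weights $\rho(t)^{\abs\alpha+j-7}/[(\abs\alpha+j-7)!]^\sigma$. The vertical viscosities $\nu\partial_z^2,\mu\partial_z^2$ yield a coercive $\|\partial_z G_\alpha^\pm\|_{L^2}^2$ term after integration by parts, the boundary contributions at $z=0$ vanishing thanks to $u|_{z=0}=0$ and $\partial_z f|_{z=0}=0$. The residual fractional loss in the tangential direction is absorbed by the contribution of $\dot\rho(t)<0$ arising from the shrinking radius, which dominates precisely when $\sigma\ge 3/2$; this is the quantitative reason for the Gevrey index in the statement. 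Combinatorial bounds of the type $\binom{\abs\alpha}{\abs\beta}(\abs\beta!)^\sigma((\abs\alpha-\abs\beta)!)^\sigma\le C(\abs\alpha!)^\sigma$ handle the nonlinear product terms, and together with a low-order Sobolev--Gagliardo--Nirenberg estimate for $\abs\alpha+j\le 6$ produce a Gronwall inequality for $\norm{(U_+,U_-)}_{\rho(t),\sigma,4}$ on a short time interval $[0,T]$ with $\rho(T)>0$.

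Existence would then be obtained by adding an artificial viscosity $\epsilon\partial_x^2$ to both evolution equations, solving the regularized system by parabolic theory, and passing to the limit $\epsilon\to 0^+$ using the above estimate uniformly in $\epsilon$; uniqueness follows by applying the same energy machinery to the difference of two solutions in a slightly smaller Gevrey ball. The main obstacle I anticipate is the explicit design of the corrector $\Phi_\alpha^\mp$: in the absence of any structural hypothesis it must simultaneously (i) cancel the worst non-local term at every order $\alpha$, (ii) enjoy $\alpha$-uniform Gevrey bounds compatible with $\norm\cdot_{\rho,\sigma,N}$, and (iii) preserve the positivity of the vertical dissipation after substitution into the energy identity. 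Propagating the full Gevrey norm of $(G_\alpha^+,G_\alpha^-)$ while keeping every error term genuinely below the $\sigma$-admissible loss threshold is the core technical difficulty to be overcome.
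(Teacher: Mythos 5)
Your Els\"asser symmetrization $U_\pm=u\pm f$ is algebraically correct and is a genuinely different starting point from the paper; the paper never diagonalizes the convection in this way but instead works directly with $(u,f)$ and with the magnetic force terms $\xi=(f\partial_x+h\partial_z)f$, $\eta=(f\partial_x+h\partial_z)u$. A first, secondary difficulty with your route that you do not mention: since $\nu\ne\mu$ in general, the diffusion does not diagonalize,
\begin{equation*}
\nu\partial_z^2 u\pm\mu\partial_z^2 f=\tfrac{\nu+\mu}{2}\partial_z^2 U_\pm+\tfrac{\nu-\mu}{2}\partial_z^2 U_\mp,
\end{equation*}
so the $U_\pm$ equations stay coupled through a vertical cross-diffusion term; this is manageable but has to be dealt with, as the paper does for the analogous $(\mu-\nu)(\partial_z f)\partial_z\partial_x^{m-1}\mathcal U$ term in the $\varphi_m$ energy estimate.

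The serious gap, however, is that your plan as written reaches only Gevrey index $2$, not $3/2$. A single good unknown $G_\alpha^\pm=\partial_x^\alpha U_\pm-(\partial_z U_\pm)\Phi_\alpha^\mp$ is a direct transplantation of the Dietert--G\'erard-Varet / Li--Masmoudi--Yang cancellation for Prandtl, which kills the top-order $(\partial_x^\alpha W_\mp)\partial_z U_\pm$ term but leaves a residual loss of one half tangential derivative, hence $\sigma=2$. The paper's improvement to $\sigma=3/2$ rests on a \emph{second, MHD-specific cancellation} that you do not identify: the quantities $\xi$ and $\eta$ themselves solve a coupled system of the schematic form
\begin{align*}
&\big(\partial_t+u\partial_x+w\partial_z-\nu\partial_z^2\big)\eta=(f\partial_x+h\partial_z)\xi+\textrm{l.o.t.},\\
&\big(\partial_t+u\partial_x+w\partial_z-\mu\partial_z^2\big)\xi=(f\partial_x+h\partial_z)\eta+\textrm{l.o.t.},
\end{align*}
in which the dangerous $\partial_x w$ contribution has cancelled outright, so that $(\xi,\eta)$ can be estimated by a symmetric energy argument with \emph{no} tangential derivative loss. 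This closes a three-tier hierarchy $\mathcal U\to\lambda\to\xi$ (equivalently, good unknown $\to$ first corrector $\to$ force), with Gevrey prefactors $1$, $m^{1/2}$, $m$ on the corresponding $m$-th order tangential derivatives; tracing the factorial bookkeeping one finds the constraint $m^{\sigma-1/2}\lesssim m$, i.e.\ $\sigma\le 3/2$. Your sentence about ``surviving commutator contributions losing only a fractional amount'' asserts the outcome but gives no mechanism, and without the $\xi,\eta$ cancellation your scheme cannot go below $\sigma=2$. To repair the plan you would need to identify the Els\"asser analogue of the coupled $(\xi,\eta)$ system with the same symmetric structure, prove the corresponding derivative-loss-free estimate, and feed it into a two-layer good-unknown construction with the matching $1$, $m^{1/2}$, $m$ weights.
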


The main part of the proof of Theorem \ref{th2d}  will be given in  Subsections \ref{subu}-\ref{secno} for
proving  the 
a priori estimate stated in Subsection \ref{subap}. 

\medskip

\noindent {\bf Notations}.  Throughout this section  we will  use  $\norm{\cdot}_{L^2}$ and $\inner{\cdot,\ \cdot}_{L^2}$ to denote the norm and inner product of  $L^2=L^2(\mathbb R_+^2),$  and use the notations $\norm{\cdot}_{L^2(\mathbb R_{x})}$ and  $\inner{\cdot,\ \cdot}_{L^2(\mathbb R_{x})}$   when the variable is specified.   Similar notations will be used  for $L^\infty.$  Moreover,   we   use  $L_{x}^p(L_z^q)=L^p\inner{\mathbb R; L^q(\mathbb R_+)}$ for the classical Sobolev space.

   \subsection{A priori estimate}\label{subap}
 
  	 Let   $(u,f)\in L^\infty\inner{[0, T];~X_{\rho_0,\sigma,4}}$  be a solution to  the boundary layer system  \eqref{mhd+} with initial datum $(u_0, f_0)\in X_{2\rho_0,\sigma,8}$ for some  $0<\rho_0\leq 1$ and $1< \sigma\leq 3/2$, recalling    $X_{\rho,\sigma, N}$  is the Gevrey function space  given in  Definition \ref{defgev}.  Moreover, suppose $(\partial_t^iu,\partial_t^if)\in L^\infty\inner{[0, T];~X_{\rho_0,\sigma,4-i}}$ for each $i\leq 4$.  This subsection
  	  together with the following Subsections \ref{subu}-\ref{secno} aim to close the a priori estimate on $u$ and $f$.   
  	  For this,  we first introduce some auxiliary functions defined below.  Some of these functions
  	   were  given in \cite{lmy} for the study on Prandtl equation  inspired  by the work  \cite{MR3925144}.  
  	 
   Let 
 $\mathcal U$ be  a solution to the linear initial-boundary problem
 		\begin{eqnarray}\label{mau}
 		\left\{
 		\begin{aligned}
&   \big(\partial_t+u\partial_x +w\partial_z-\nu\partial_z^2\big)    \int_0^z\mathcal U  d\tilde z  =  -\partial_x w,\\
& \mathcal U|_{t=0}=0, \quad \partial_z\mathcal U|_{z=0}=\mathcal U|_{z\rightarrow+\infty}=0,
   \end{aligned}
   \right.
	\end{eqnarray} 
	where   $\int_0^z\mathcal U d\tilde z=\int_0^z\mathcal U(t,x,\tilde z) d\tilde z$.  
In addition, we define 
 \begin{equation}\label{lamd}
	\lambda=\partial_xu-(\partial_zu)\int_0^z\mathcal U d\tilde z,\quad \delta=\partial_xf-(\partial_zf)\int_0^z\mathcal U d\tilde z.
\end{equation} 
 Note the existence  of solution to  the initial-boundary problem \eqref{mau} follows from the standard parabolic theory.
  With these  functions and $\xi,\eta$ defined in \eqref{varpsi},  set
   \begin{eqnarray*}
 	\vec a=(u,f, \mathcal U,\lambda, \delta, \xi,\eta ).
 \end{eqnarray*}
And then we define the following Gevrey norm on $\vec a$.
 
 \begin{definition}\label{deoa}
Let $\vec a$ be given above,  define  
 \begin{equation}\label{devea}
\begin{aligned}
	  & \abs{\vec a }_{\rho,\sigma}    =    \sup_{\stackrel{i+j\leq 4}{ m+i+j\geq 7}}   \frac{\rho^{m+i+j-7}}{[\inner{m+i+j-7}!]^{\sigma}}\inner{     \norm{ \partial_t^i\partial_x^m \partial_z^ju}_{L^2}+   \norm{ \partial_t^i\partial_x^m\partial_z^j f}_{L^2}}+\sup_{\stackrel{i\leq 4}{ m+i\geq 6}}   \frac{\rho^{m+i-6}}{[\inner{m+i-6}!]^{\sigma}}     \norm{ \partial_t^i\partial_x^m\mathcal U}_{L^2} \\
	   &\quad+\sup_{\stackrel{i\leq 4}{ m+i\geq 6}}    \frac{\rho^{m+i-6}}{[\inner{m+i-6}!]^{\sigma}}   \inner{    m^{1 / 2}  \norm{ \partial_t^i\partial_x^m \lambda}_{L^2}+m^{1 / 2}  \norm{ \partial_t^i\partial_x^m \delta}_{L^2}}\\
	   &\quad +  \sup_{\stackrel{i\leq 4}{ m+i\geq 6}}   \frac{ \rho^{m+i-6}}{ [\inner{m+i-6}!]^{\sigma}}  \inner{m\norm{  \comi z^\ell \partial_t^i\partial_x^m\xi }_{L^2}+m\norm{  \comi z^\ell \partial_t^i\partial_x^m\eta }_{L^2}}  
	 \\
 		&\quad +\sup_{\stackrel{i+j\leq 4}{ m+i+j\leq 6}}    \inner{     \norm{ \partial_t^i \partial_x^m\partial_z^j u}_{L^2}+   \norm{ \partial_t^i\partial_x^m \partial_z^jf}_{L^2}}+	\sup_{\stackrel{i\leq 4}{ m+i\leq 5}}     \norm{\partial_t^i\partial_x^m  \mathcal U}_{L^2}      
	 		   \\
 		& \quad+	\sup_{\stackrel{i\leq 4}{ m+i\leq 5}}     \big(     \norm{  \partial_t^i\partial_x^m\lambda }_{L^2}  +   \norm{  \partial_t^i\partial_x^m\delta }_{L^2}+  \norm{  \comi z^\ell \partial_t^i\partial_x^m\xi }_{L^2}+ \norm{ \comi z^\ell \partial_t^i \partial_x^m\eta }_{L^2}  
	 		   \big),
	 		   	 	\end{aligned}
	 \end{equation}
	  	where  the number $\ell$ is given in  Definition \ref{defgev}. 
   \end{definition}
 
 \begin{remark} Note that we have different powers of  $m$ for the $L^2$ norms  of  the $m^{th}$ order derivatives  $\partial_x^m\mathcal U,\partial_x^m\mathcal \lambda$ and  $\partial_x^m\xi$.  This is motivated by the following relations between these functions:  
  \begin{equation}\label{ulm}
 \left\{
 \begin{aligned}
&\big(\partial_t+u\partial_x +w\partial_z-\nu\partial_z^2\big) \mathcal U  
=   \partial_x\lambda +\textrm{l.o.t.},\\
& \big(\partial_t + u\partial_x   	 +w\partial_z   -\nu\partial _{z}^2\big)  \lambda  
 	 =\partial_x\xi +\textrm{l.o.t.},
\end{aligned}
\right. 	
\end{equation} 
where $\textrm{l.o.t.}  $ refers to lower order terms. Formally,  there is  one order derivative loss
 in both equations  of \eqref{ulm}.    However,  if $\xi$ can be estimated,   then we  lose only $2/3$ order rather than one order derivative  by  treating  $\lambda$ and $\xi$ as  $-1/3$ and $-2/3$  order derivatives of $\mathcal U$. This corresponds to the different powers of $m$ before these auxiliary functions in the definition \eqref{devea}.  
 \end{remark}
 
 \begin{remark}
 	As in \cite{lmy}  the auxiliary function $\mathcal U$ is used to overcome the loss of derivatives in $w$. And to overcome the loss of derivative in $h$, we observe a new cancellation  mechanism for the magnetic convection term $\xi=(f\partial_x +h\partial_z)f$ and this enables us to close the a priori estimate.
 \end{remark}

 Now we state the main result concerning the a priori estimate.  Without loss of generality we only consider the case when the Gevrey index $\sigma=3/2$, and the argument  works with slight modification for  $1<\sigma< 3/2$  (see Subsection \ref{subpr}).

 \begin{assumption}\label{assmain}
	Let  $X_{\rho,\sigma}$  be the Gevrey function space  given in  Definition \ref{defgev}.    Suppose   $(u,f)\in L^\infty\inner{[0, T];~X_{\rho_0,\sigma,4}}$ with some  $0<\rho_0\leq 1$ and $\sigma=3/2$  is a solution to  the boundary layer system  \eqref{mhd+} equipped  with  the condition \eqref{inbc}, where   the initial datum $(u_0, f_0)\in X_{2\rho_0,\sigma,8}$.      Without loss of generality we may assume $T\leq 1$.  Moreover,  we suppose $(\partial_t^iu,\partial_t^if)\in L^\infty\inner{[0, T];~X_{\rho_0,\sigma,4-i}}$ for  $1\le i\leq 4$ and  there exists a constant $C_*$     such that, for any $  t\in[0,T]$,
   \begin{equation}\label{condi1}
\quad  \sup_{\stackrel{  i+ j\leq 4}{k+i+j\leq 10}}  \Big(\big\|\comi z^{\ell+j} \partial_t^i\partial_x^k \partial_z^j   u(t)\big\|_{L^2}+\big\|\comi z^{\ell+j} \partial_t^i\partial_x^k \partial_z^j  f(t)\big\|_{L^2}\Big) \leq  C_*,
\end{equation}
where the constant $C_*\geq 1$  depends only on $\norm{(u_0,f_0)}_{2\rho_0, \sigma,8}$,  the Sobolev embedding  constants and the numbers $\rho_0, \sigma, \ell$ that are given in Definition \ref{defgev}. 
\end{assumption}

\begin{theorem}\label{apriori}
Let $\abs{\vec a}_{\rho,\sigma}$ be given in \eqref{devea}.  Under Assumption \ref{assmain}, there exist
     two constants   $C_1, C_2\geq 1,$     such that  for any pair $(\rho,\tilde\rho)$ with   $0<\rho<\tilde \rho<\rho_0$,   the following estimate
  \begin{equation} \label{aes}
  \begin{aligned}
 	\abs{\vec a (t)}_{\rho,\sigma}^2 \leq & C_1 \inner{\norm{(u_0, f_0)}_{2\rho_0, \sigma,8}^2+\norm{(u_0, f_0)}_{2\rho_0, \sigma, 8}^4}  \\ 
 	& +e^{C_2C_*^2}\bigg( \int_{0}^{t} \inner{\abs{\vec a(s)}_{\rho,\sigma}^2+\abs{\vec a(s)}_{\rho,\sigma}^4} \,ds+  \int_{0}^{t}\frac{ \abs{\vec a(s)}_{\tilde\rho,\sigma}^2}{\tilde \rho-\rho}\,ds\bigg)
 	 \end{aligned}
\end{equation}	 
 holds  for any $t\in[0,T],$   where the constants $C_1$ and $C_2$ depend  only on the Sobolev embedding  constants and the numbers $\rho_0, \sigma, \ell$  given in Definition \ref{defgev}. Both   $C_1$ and $C_2$  are independent of the constant $C_*$ given in \eqref{condi1}.  
 \end{theorem}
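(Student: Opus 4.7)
The plan is to establish \eqref{aes} by simultaneous Gevrey-weighted tangential energy estimates on all seven components of $\vec a$. For each admissible pair $(i,m)$, I would apply $\partial_t^i\partial_x^m$ to each of the equations for $u, f$ in \eqref{mhd+} and to the evolution equations satisfied by the auxiliaries $\mathcal U, \lambda, \delta, \xi, \eta$, test against the corresponding quantity in $L^2$ (with weight $\comi z^{\ell}$ in the case of $\xi, \eta$), and sum over $(i,m)$ with the Gevrey factors prescribed by Definition \ref{deoa}. The low-order contributions ($m+i \le 5$, or $m+i+j\le 6$ when $z$-derivatives are included) are controlled by standard Sobolev energy estimates using the pointwise bound \eqref{condi1}; after Moser-type commutator manipulations they produce the $e^{C_2 C_*^2}\int_0^t(\abs{\vec a}_{\rho,\sigma}^2 + \abs{\vec a}_{\rho,\sigma}^4)\,ds$ term of \eqref{aes}. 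The initial-data constant $C_1(\norm{(u_0,f_0)}_{2\rho_0,\sigma,8}^2 + \norm{(u_0,f_0)}_{2\rho_0,\sigma,8}^4)$ arises from integrating the energy identities from $0$ to $t$, combined with the embedding of $X_{2\rho_0,\sigma,8}$ into $X_{\rho_0,\sigma,4}$ and the use of the equations themselves to read off the required $\partial_t^i$-norms of the data at time zero.

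The delicate matters are the top-order nonlinear interactions. When $\partial_x^m$ with $m+i\ge 7$ hits the convective term $w\partial_z u$ in the $u$-equation, the contribution $(\partial_x^m w)\partial_z u = -\big(\int_0^z\partial_x^{m+1}u\,d\tilde z\big)\partial_z u$ loses one tangential derivative, and likewise $(\partial_x^m h)\partial_z f$ from the $f$-equation. Following \cite{MR3925144,lmy}, the hydrodynamic loss is absorbed via the identity $\partial_x u = \lambda + (\partial_z u)\int_0^z\mathcal U$, rewriting $\partial_x^{m+1}u$ in terms of $\partial_x^m\lambda$ and $\partial_x^m\mathcal U$; these auxiliaries appear in $\abs{\vec a}_{\rho,\sigma}$ with index shift $m+i-6$ rather than $m+i-7$, i.e.\ with exactly one tangential derivative of extra regularity. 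The chain \eqref{ulm} then passes the loss downstream: the $\mathcal U$-energy cedes one $\partial_x$ to $\partial_x\lambda$, and the $\lambda$-energy cedes one $\partial_x$ to $\partial_x\xi$. The polynomial pre-factors $m^{1/2}$ on $\lambda,\delta$ and $m$ on $\xi,\eta$ in Definition \ref{deoa} are calibrated so that in the resulting Cauchy--Schwarz cross terms the combinatorial loss is a bounded power of $m$, absorbable via the Gevrey-shift inequality $m^\sigma(\rho/\tilde\rho)^m \lesssim (\tilde\rho-\rho)^{-\sigma}$, and producing the $\int_0^t\abs{\vec a(s)}_{\tilde\rho,\sigma}^2/(\tilde\rho-\rho)\,ds$ term of \eqref{aes}.

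The main obstacle, and the essential novelty relative to the purely hydrodynamic case, is the parallel cascade on the magnetic side. The magnetic field lacks the vorticity structure that makes $\mathcal U$ effective for the velocity, so the $f$-equation admits no direct analog of the $\mathcal U$-device; instead, the magnetic loss couples to the velocity loss through $\xi$ and $\eta$, which appear as sources in the $u$- and $f$-equations via \eqref{varpsi}. The new cancellation mechanism operates on the equation for $\xi$ itself: when $\partial_t\xi$ is computed from \eqref{mhd+}, and one uses the divergence-free identities $\partial_z w=-\partial_x u$ and $\partial_z h=-\partial_x f$ together with the antisymmetric form $\nabla\times(\vec u\times\vec f)=(\vec f\cdot\nabla)\vec u-(\vec u\cdot\nabla)\vec f$, the would-be further top-order losses from $\partial_x^{m+1}f$ and $\partial_x^{m+1}u$ align and combine into expressions reducible to $\partial_x^m\xi$ and $\partial_x^m\eta$ themselves plus lower-order contributions already controlled by $\abs{\vec a}_{\rho,\sigma}$ and the constant $C_*$. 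This closes the $\xi$- and $\eta$-energy estimates at their own level, and by the cascade the estimates for $\lambda, \delta, \mathcal U$, and finally $u, f$ follow in turn, assembling into \eqref{aes}. The cases $1<\sigma<3/2$ require only verbatim adjustments, since the combinatorial factor $m^\sigma$ is then strictly smaller and every inequality in the chain is tighter.
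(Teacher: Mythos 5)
Your proposal follows essentially the same route as the paper: the auxiliary function $\mathcal U$ with the good unknowns $\partial_x^m u-(\partial_z u)\int_0^z\partial_x^{m-1}\mathcal U\,d\tilde z$, the cascade $\mathcal U\to\lambda\to\xi$ calibrated by the weights $1$, $m^{1/2}$, $m$, and the closure of the magnetic derivative loss at the level of $(\xi,\eta)$. One point should be stated precisely: the $\xi$- and $\eta$-estimates do not each close ``at their own level'' --- after the dangerous $\partial_x w$ terms cancel in deriving the evolution equations, the $\eta$-equation still carries the top-order transport $(f\partial_x+h\partial_z)\partial_x^m\xi$ and the $\xi$-equation carries $(f\partial_x+h\partial_z)\partial_x^m\eta$, and it is only the \emph{sum} of the two weighted energies that closes, these cross terms cancelling under integration by parts thanks to $\partial_xf+\partial_zh=0$ and the boundary conditions $\partial_z\xi|_{z=0}=\eta|_{z=0}=0$. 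Finally, the components of $\abs{\vec a}_{\rho,\sigma}$ carrying normal derivatives need a separate induction on $j$ in which the boundary traces $\partial_z^{j+1}u|_{z=0}$ are re-expressed via the equation, and there too the $u$--$f$ coupling is removed by the same symmetric pairing; this part is omitted from your outline but is routine once the tangential estimates are in place.
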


The rest  of this section  devotes to the proof of this  a  priori estimate.  We will proceed in the  following Subsections \ref{subu}-\ref{secno}   to derive the estimates on   the terms involved in the definition \eqref{devea} of $\abs{\vec{a}}_{\rho,\sigma}.$

 To simplify the notation,     from now on the   capital letter $C$  denotes some generic constant that may vary from line to line that depends  only on the Sobolev embedding constants and the numbers $ \rho_0, \sigma, \ell$ given in Definition \ref{defgev}  but is  independent of the constant $C_*$   in   \eqref{condi1} and the order of differentiation denoted by $m$.

 \subsection{Tangential derivatives of $\mathcal U$}\label{subu}

For the tangential derivatives of $\mathcal U$  defined in \eqref{mau}, we have the following estimate.

\begin{proposition}\label{prpu}
	Under Assumption \ref{assmain} we have,  for any    $t\in[0,T]$ and 
   for any pair $(\rho,\tilde\rho)$ with   $0<\rho<\tilde \rho<\rho_0\leq 1$,  
     \begin{multline*}
		 	\sup_{m\geq 6}\frac{\rho^{2(m-6)}}{   [\inner{m-6}!]^{2\sigma}}   \norm{\partial_x^{m} \mathcal U(t)}_{L^2}^2 + \sup_{m \leq 5} \norm{\partial_x^{m} \mathcal U(t)}_{L^2}^2 \leq  CC_*\bigg(        \int_0^{t}   \big( \abs{\vec a(s)}_{ \rho,\sigma}^2+\abs{\vec a(s)}_{ \rho,\sigma}^4\big)   \,ds+ \int_0^{t}  \frac{  \abs{\vec a(s)}_{ \tilde\rho,\sigma}^2}{\tilde\rho-\rho}\,ds\bigg),
\end{multline*}
where $C_*\geq 1$ is the constant given in \eqref{condi1}. 
\end{proposition}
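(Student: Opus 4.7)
The plan is to derive the evolution equation for $\mathcal U$ itself by differentiating \eqref{mau} in $z$; using $\partial_z w=-\partial_x u$ together with the definition \eqref{lamd} of $\lambda$, a direct computation produces the concrete form of the first line of \eqref{ulm}, namely
$$
(\partial_t+u\partial_x+w\partial_z-\nu\partial_z^2)\mathcal U = \partial_x\lambda + (\partial_x\partial_z u)\int_0^z\mathcal U\,d\tilde z + (\partial_x u)\mathcal U.
$$
I would then apply $\partial_x^m$ and run the standard $L^2$ energy estimate by testing against $\partial_x^m\mathcal U$. The transport pairing $\langle(u\partial_x+w\partial_z)\partial_x^m\mathcal U,\partial_x^m\mathcal U\rangle$ vanishes thanks to $\partial_x u+\partial_z w=0$ and $w|_{z=0}=0$, and the viscous term yields the nonnegative quantity $\nu\|\partial_z\partial_x^m\mathcal U\|_{L^2}^2$ with no boundary contribution since $\partial_z\mathcal U|_{z=0}=0$. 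What remains is to bound the commutator $[\partial_x^m,u\partial_x+w\partial_z]\mathcal U$ and the forcing $\partial_x^{m+1}\lambda+\partial_x^m(\text{l.o.t.})$.

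The main obstacle is the forcing $\langle\partial_x^{m+1}\lambda,\partial_x^m\mathcal U\rangle$, which naively loses one tangential derivative. The remedy is engineered into the definition \eqref{devea}: $\lambda$ carries the extra weight $m^{1/2}$, absorbing half a derivative, and the remaining half derivative is recovered by the standard analytic-style trick of comparing radii $\rho<\tilde\rho$. Concretely, the ratio of the Gevrey weight of order $m+1$ at radius $\tilde\rho$ for $\lambda$ to the target weight of order $m$ at radius $\rho$ for $\mathcal U$ is of the shape $(m-5)^{\sigma}(m+1)^{-1/2}(\rho/\tilde\rho)^{m-6}$, which with $\sigma=3/2$ is of order $(m-5)(\rho/\tilde\rho)^{m-6}\lesssim (\tilde\rho-\rho)^{-1}$ by the elementary maximisation $\sup_{x\ge 0}x(\rho/\tilde\rho)^x\le 1/\ln(\tilde\rho/\rho)\lesssim(\tilde\rho-\rho)^{-1}$. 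Combining Cauchy--Schwarz with Young's inequality then produces a contribution of the form $(\tilde\rho-\rho)^{-1}|\vec a|_{\tilde\rho,\sigma}^2+|\vec a|_{\rho,\sigma}^2$, which matches the two structurally different terms on the right-hand side of the claim.

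The commutator $[\partial_x^m,u\partial_x+w\partial_z]\mathcal U$ expands by Leibniz into sums of $(\partial_x^k u)(\partial_x^{m-k+1}\mathcal U)$ and $(\partial_x^k w)(\partial_x^{m-k}\partial_z\mathcal U)$, the latter treated via $w=-\int_0^z\partial_x u\,d\tilde z$. When $k$ is small, the $u$ or $w$ factor is bounded by $C_*$ through Sobolev embedding and \eqref{condi1}, while the $\mathcal U$ factor is absorbed by $|\vec a|_{\rho,\sigma}$; when $k$ is close to $m$ the roles reverse; for intermediate $k$, a Sobolev embedding on $\mathbb R_+^2$ places one factor in $L^\infty$ so that both factors lie in the Gevrey regime, producing the quartic contribution $|\vec a|_{\rho,\sigma}^4$. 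After multiplication by the Gevrey weight $\rho^{2(m-6)}/[(m-6)!]^{2\sigma}$, the factorials recombine through the elementary bound $[k!]^{\sigma}[(m-k)!]^{\sigma}\le C^{m}[m!]^{\sigma}$ for $\sigma\ge 1$. The low-order range $m\le 5$ is handled by the same $L^2$ scheme without invoking the Cauchy trick, with all forcing terms directly bounded via $C_*$. Summing the two regimes and integrating in time finally yields the claimed estimate, the overall $CC_*$ prefactor coming from the low-order Sobolev controls.
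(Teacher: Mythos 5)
Your proposal follows essentially the same route as the paper's: derive the evolution equation for $\mathcal U$ with $\partial_x\lambda$ as the leading forcing, run the $L^2$ energy estimate on $\partial_x^m\mathcal U$ (the transport term vanishes by divergence-freeness, the boundary contribution drops by $\partial_z\mathcal U|_{z=0}=0$), and handle $\langle\partial_x^{m+1}\lambda,\partial_x^m\mathcal U\rangle$ by exploiting the built-in $m^{1/2}$ weight on $\lambda$ in \eqref{devea} together with the radius-shrinking device --- the paper packages your elementary maximisation as inequality \eqref{factor}. The only structural difference is that the paper simply cites \cite[Lemmas 3.3--3.4]{lmy} for the commutator and lower-order bounds, whereas you outline re-deriving them.

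Executed literally, your commutator outline stalls at the terms $\sum_k\binom{m}{k}(\partial_x^k w)\,\partial_x^{m-k}\partial_z\mathcal U$: you assert that the $\mathcal U$ factor is ``absorbed by $|\vec a|_{\rho,\sigma}$,'' but the norm in Definition \ref{deoa} contains no normal derivatives of $\mathcal U$ at all, so $\partial_x^{m-k}\partial_z\mathcal U$ is not directly controlled for any $k$. The repair --- which is exactly what the cited lemmas carry out and the reason their conclusions surrender $\nu/2$ of the dissipation on the right --- is to integrate by parts in $z$ (no boundary term since $w|_{z=0}=0$), sending $\partial_z$ either onto $\partial_x^m\mathcal U$, where it is absorbed by the retained viscous term $\nu\int_0^t\|\partial_z\partial_x^m\mathcal U\|_{L^2}^2$ from \eqref{uma+}, or onto the $w$-factor via $\partial_z w=-\partial_x u$, which returns it to the controlled family. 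With that one step supplied, your argument matches the paper's proof.
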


\begin{proof}
	  We apply $\partial_z$ to \eqref{mau} and then use  the representation \eqref{lamd} of $\lambda$   to get
 \begin{eqnarray*}
	 \big(\partial_t+u\partial_x +w\partial_z-\nu\partial_z^2\big) \mathcal U =\partial_x\lambda +(\partial_x\partial_zu)\int_0^z\mathcal U d\tilde z  +(\partial_xu)\mathcal U.
\end{eqnarray*}	
	Then applying $\partial_x^{m}$ to the above equation yields
	\begin{eqnarray*}
	 \big(\partial_t+u\partial_x +w\partial_z-\nu\partial_z^2\big) \partial_x^{m} \mathcal U
	 &= &\partial_x^{m+1}\lambda-\sum_{j=1}^{m}{m\choose j}\Big[(\partial_x^j u) \partial_{x}^{m-j+1}\mathcal U  +(\partial_x^j w)\partial_{x}^{m-j}\partial_z\mathcal U\Big] 
	\\   &&+\partial_x^{m}\Big[(\partial_x\partial_zu)\int_0^z\mathcal U d\tilde z  +(\partial_xu)\mathcal U\Big]. 
\end{eqnarray*} 
We take the scalar product with $\partial_x^m\mathcal U$ on the both sides of this equation. Since  $\mathcal U|_{t=0}=\partial_z\mathcal U|_{z=0}=0$, it holds
\begin{eqnarray}\label{uma+}
\begin{aligned}
	&\frac{1}{2}\norm{\partial_x^{m} \mathcal U(t)}_{L^2}^2 +\nu\int_0^{t} \norm{\partial_z\partial_x^{m} \mathcal U(s)}_{L^2}^2ds =\int_0^{t} \Big( \partial_x^{m+1}\lambda,\ \partial_x^{m} \mathcal U\Big)_{L^2}ds\\
	&\quad -\int_0^{t} \Big( \sum_{j=1}^{m}{m\choose j}\Big[(\partial_x^j u) \partial_{x}^{m-j+1}\mathcal U +(\partial_x^j w)\partial_{x}^{m-j}\partial_z\mathcal U\Big],\ \partial_x^{m} \mathcal U \Big)_{L^2}ds\\
	&\qquad +\int_0^{t} \Big(  \partial_x^{m}\Big[(\partial_x\partial_zu )\int_0^z\mathcal U d\tilde z  +(\partial_xu )\mathcal U\Big],\ \partial_x^{m} \mathcal U\Big)_{L^2}ds.
	\end{aligned}
\end{eqnarray}
It remains to derive the upper bound for the terms on the right side in the above equation. 	From definition \eqref{devea} of $\abs{\vec a}_{\rho,\sigma}$,   it  follows
that, for any $ 0< r\leq \rho_0$ and any  $ j\geq 6$, 
 \begin{equation}\label{exi}
  \norm{ \partial_x^j\mathcal U }_{L^2}+  j^{1/2}\inner{\norm{ \partial_x^j\lambda }_{L^2} +\norm{ \partial_x^j\delta}_{L^2} }\leq 
	 \frac{   [\inner{j-6}!]^{ \sigma}}{r^{ (j-6)}}\abs{\vec a}_{r,\sigma}. \end{equation}
	 When $\sigma=3/2,$ we have
	 \begin{eqnarray*}
	 \begin{aligned}
	 	\int_0^{t} \big( \partial_x^{m+1}\lambda,\ \partial_x^{m} \mathcal U \big)_{L^2}ds &\leq  \int_0^{t} m^{-1/2}   \frac{   [\inner{m-5}!]^{ \sigma}}{\tilde\rho^{ (m-5)}} \frac{   [\inner{m-6}!]^{ \sigma}}{\tilde\rho^{ (m-6)}}\abs{\vec a(s)}_{\tilde\rho,\sigma}^2   ds \\
	 	&\leq   C\frac{ [(m-6)!]^{ 2\sigma}  }{  \rho^{ 2(m-6)}}  \int_0^{t} \frac{m^{\sigma-1/2} }{\tilde\rho } \Big(\frac{  \rho }{\tilde\rho}\Big)^{ 2(m-6)}\abs{\vec a(s)}_{\tilde\rho,\sigma}^2   ds\\  
	 	& \leq C\frac{ [(m-6)!]^{ 2\sigma}  }{  \rho^{ 2(m-6)}}    \int_0^{t}  \frac{\abs{\vec a(s)}_{\tilde\rho,\sigma} ^2 }{ \tilde\rho-\rho}       ds,
	 	\end{aligned}
	 \end{eqnarray*}
where in 	the last inequality we have used the fact  that    
 for any integer $k\geq 1$ and for any  pair $(\rho,\tilde \rho)$ with $0<\rho<\tilde\rho\leq 1,$  
\begin{equation}
\label{factor}
   k\inner{\frac{\rho}{\tilde\rho}}^k\leq 	\frac{k}{\tilde\rho} \inner{\frac{\rho}{\tilde\rho}}^k\leq\frac{1}{\tilde\rho-\rho}.
\end{equation}
    On the other hand, the following two estimates are proved respectively in Lemma 3.3 and Lemma 3.4 in \cite{lmy}:
   \begin{eqnarray*}
   &&	-\int_0^{t} \Big( \sum_{j=1}^{m}{m\choose j}\Big[(\partial_x^j u) \partial_{x}^{m-j+1}\mathcal U+ (\partial_x^j w)\partial_{x}^{m-j}\partial_z\mathcal U\Big],\ \partial_x^{m} \mathcal U \Big)_{L^2}ds\\
   	&&\leq {\nu\over 2} \int_0^{t} \norm{\partial_z\partial_x^{m} \mathcal U }_{L^2}^2 ds+ C \frac{[\inner{m-6}!]^{2\sigma}}{\rho^{2(m-6)}} \bigg( \int_0^{t}  \big(\abs{\vec a(s)}_{ \rho,\sigma}^3+   \abs{\vec a(s)}_{ \rho,\sigma}^4 \big)ds\bigg)\\
   	&&\quad+ CC_* \frac{[\inner{m-6}!]^{2\sigma}}{\rho^{2(m-6)}}    \int_0^{t}   \frac{\abs{\vec a(s)}_{\tilde \rho,\sigma}^2}{\tilde\rho-\rho} ds,
   \end{eqnarray*}
   and
   \begin{eqnarray*} 
	&&	\int_0^{t} \Big(\partial_x^{m}\Big[(\partial_x\partial_zu)\int_0^z\mathcal U d\tilde z  +(\partial_xu)\mathcal U\Big], \partial_x^{m} \mathcal U\Big)_{L^2} ds\\
	&&	\leq      \frac{\nu}{2}\int_0^{t}\norm{ \partial_z \partial_x^m\mathcal U}_{L^2}^2ds+  C\frac{  [\inner{m-6}!]^{2\sigma}}{\rho^{2(m-6)}}\int_0^{t} \big(  \abs{\vec a(s)}_{ \rho,\sigma}^3+   \abs{\vec a(s)}_{ \rho,\sigma}^4\big)  ds,
	\end{eqnarray*}
with $C_*\geq 1$  the constant in \eqref{condi1}.   Then we combine the above inequalities with \eqref{uma+} to obtain, for any $m\geq 6$ 
 	 \begin{eqnarray*}
 	 		 \frac{\rho^{2(m-6)}}{   [\inner{m-6}!]^{2\sigma}}   \norm{\partial_x^{m} \mathcal U(t)}_{L^2}^2 
 	 		\leq   C  \int_0^{t}  ( \abs{\vec a(s)}_{ \rho,\sigma}^2+\abs{\vec a(s)}_{ \rho,\sigma}^4 )   \,ds+CC_* \int_0^{t}  \frac{  \abs{\vec a(s)}_{ \tilde\rho,\sigma}^2}{\tilde\rho-\rho}\,ds.
 	 \end{eqnarray*}
 	 The estimate for $m\leq 5$ is straightforward. 
 	Thus the proof of Proposition \ref{prpu} is completed.
 	  \end{proof}

 \subsection{Tangential derivatives of $u$ and $f$}
 
For the tangential derivatives  of  $u,f$, we have the following estimate.

 \begin{proposition}\label{propuv++++}
 Under Assumption \ref{assmain},  
 	  for any    $t\in[0,T]$ and  
  any pair $\inner{\rho,\tilde\rho}$ with $0<\rho<\tilde\rho<\rho_0\leq 1$,    we have
    	 \begin{eqnarray*}	
  \begin{aligned}
&\sup_{m \geq 7}  \frac{\rho^{2(m-7)}} {  [(m-7)!]^{2\sigma}  } \norm{\comi z^{\ell}\partial_x^m u (t)}_{L^2}^2 
 + \sup_{m \leq 6}    \norm{\comi z^{\ell}\partial_x^m u (t)}_{L^2}^2 \\
& \quad+\sup_{m \geq 7}  \frac{\rho^{2(m-7)}} {  [(m-7)!]^{2\sigma}  }\int_0^t\norm{\comi z^{\ell}\partial_z \partial_x^m u (s)}_{L^2}^2ds+\sup_{m \leq 6}   \int_0^t\norm{\comi z^{\ell}\partial_z \partial_x^m u (s)}_{L^2}^2ds\\	
 	&	 \leq  C \norm{(u_0,f_0)}_{2\rho_0,\sigma, 8}^2 + C  C_*^3\bigg(   \int_0^{t}  \big(\abs{\vec a(s)}_{\rho,\sigma}^2+\abs{\vec a(s)}_{\rho,\sigma}^4 \big)ds
 		+   	\int_0^{t}   \frac{ \abs{\vec a(s)}_{\tilde\rho,\sigma}^2}{\tilde\rho-\rho} ds\bigg),
 		\end{aligned}
 	\end{eqnarray*}
 	where $C_*\geq 1 $ is the constant given in \eqref{condi1}.
 	Similarly,  the same upper bound  holds when $\partial_x^m u$ is replaced by $\partial_x^m f$. 
 \end{proposition}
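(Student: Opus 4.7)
The plan is to run a weighted $L^2$ energy estimate directly on the equation for $\partial_x^m u$ obtained by applying $\partial_x^m$ to the first line of \eqref{mhd+}:
\begin{eqnarray*}
\inner{\partial_t+u\partial_x+w\partial_z-\nu\partial_z^2}\partial_x^m u=\partial_x^m\xi-\sum_{j=1}^{m}{m\choose j}\com{(\partial_x^j u)\partial_x^{m-j+1}u+(\partial_x^j w)\partial_x^{m-j}\partial_z u}.
\end{eqnarray*}
Taking the scalar product with $\comi z^{2\ell}\partial_x^m u$ and integrating over $[0,t]\times \mathbb R_+^2$, the parabolic term produces the dissipation $\nu\int_0^t\norm{\comi z^\ell\partial_z\partial_x^m u(s)}_{L^2}^2\,ds$ after one integration by parts, the boundary contribution at $z=0$ vanishing since $u|_{z=0}=0$; hitting $\comi z^{2\ell}$ with $\partial_z$ produces a $\comi z^{2\ell-2}$ lower-order term absorbable by $C_*$. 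The transport operator $u\partial_x+w\partial_z$ is skew-symmetric modulo $\nabla\cdot(u,w)=0$ and the weight derivative, both controlled by $C_*$ via \eqref{condi1}.

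The main obstacle is the $j=m$ commutator piece $(\partial_x^m w)(\partial_z u)=-(\partial_z u)\int_0^z \partial_x^{m+1}u\,d\tilde z$, which formally contains $\partial_x^{m+1}u$ and thus one derivative beyond what $\abs{\vec a}_{\rho,\sigma}$ provides for $u$. This is precisely the loss the auxiliary function $\mathcal U$ is designed to resolve: by \eqref{lamd} one may rewrite $\partial_x^{m+1}u=\partial_x^m\lambda+\partial_x^m\bigl[(\partial_z u)\int_0^z \mathcal U\,d\tilde z\bigr]$, trading the top $x$-derivative of $u$ for $\partial_x^m\lambda$ and $\partial_x^m\mathcal U$, which both appear in \eqref{devea} with weights $m^{1/2}$ and $1$, controlled by $\abs{\vec a}_{\rho,\sigma}$. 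Cauchy--Schwarz against $\comi z^{2\ell}\partial_x^m u$ together with $\norm{\comi z^{\ell}\partial_z u}_{L^\infty}\leq CC_*$ from \eqref{condi1} then generates a factor of the form $m^{\sigma-1/2}(\rho/\tilde\rho)^{2(m-7)}$, and \eqref{factor} collapses it into the $(\tilde\rho-\rho)^{-1}\abs{\vec a}_{\tilde\rho,\sigma}^2$ contribution on the right-hand side of the claimed estimate.

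The intermediate commutator terms $1\leq j\leq m-1$ are split by the Gevrey convolution bound ${m\choose j}[(j-c)_+!(m-j-c)_+!]^\sigma\lesssim [(m-c)!]^\sigma$ and distributed into products of an $L^\infty_{t,x,z}$ low-order factor (order $\leq 6$, bounded by $C_*$ via Sobolev embedding and \eqref{condi1}) and a high-order factor absorbed by $\abs{\vec a}_{\rho,\sigma}$. Terms involving $\partial_x^j w$ with $1\leq j\leq m-1$ are handled using the Hardy-type estimate $\norm{\comi z^{-1}\int_0^z \partial_x^{j+1} u\,d\tilde z}_{L^2_z}\lesssim \norm{\partial_x^{j+1}u}_{L^2_z}$, recovering a Gevrey-controlled quantity. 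Collectively these yield the cubic and quartic powers $\abs{\vec a}_{\rho,\sigma}^2+\abs{\vec a}_{\rho,\sigma}^4$, with a $C_*^3$ prefactor accounting for up to three low-order factors in certain trilinear products generated during the splitting.

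The source term $\partial_x^m\xi$ is handled directly by Cauchy--Schwarz together with the entry $m\norm{\comi z^\ell\partial_x^m\xi}_{L^2}\leq [(m-6)!]^\sigma\rho^{-(m-6)}\abs{\vec a}_{\rho,\sigma}$ from \eqref{devea}; the $1/m$ in the $\xi$-bound compensates the factorial shift from $(m-7)!$ to $(m-6)!$, and a further pass from $\rho$ to $\tilde\rho$ via \eqref{factor} puts it in the form $(\tilde\rho-\rho)^{-1}\abs{\vec a}_{\tilde\rho,\sigma}^2$. The low-mode part $m\leq 6$ is immediate from \eqref{condi1}, initial data account for $C\norm{(u_0,f_0)}_{2\rho_0,\sigma,8}^2$, and the estimate for $\partial_x^m f$ follows by the identical argument applied to the second equation of \eqref{mhd+}, with $\xi$ replaced by $\eta$, $\nu$ by $\mu$, and the boundary condition $\partial_z f|_{z=0}=0$ ensuring the boundary integration by parts still vanishes.
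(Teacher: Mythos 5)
There is a genuine gap at the central step, namely your treatment of the loss-of-derivative term $-(\partial_x^m w)\partial_z u=(\partial_z u)\int_0^z\partial_x^{m+1}u\,d\tilde z$. You substitute $\partial_x^{m+1}u=\partial_x^m\lambda+\partial_x^m\big[(\partial_z u)\int_0^z\mathcal U\,d\tilde z\big]$ using \eqref{lamd} and then close by Cauchy--Schwarz. For the $\lambda$-part this works: $\norm{\partial_x^m\lambda}_{L^2}\leq m^{-1/2}[(m-6)!]^{\sigma}\tilde\rho^{-(m-6)}\abs{\vec a}_{\tilde\rho,\sigma}$, and pairing against $\norm{\comi z^\ell\partial_x^m u}_{L^2}\leq[(m-7)!]^{\sigma}\tilde\rho^{-(m-7)}\abs{\vec a}_{\tilde\rho,\sigma}$ produces the factor $m^{-1/2}(m-6)^{\sigma}\simeq m^{\sigma-1/2}=m$ for $\sigma=3/2$, which \eqref{factor} absorbs into $(\tilde\rho-\rho)^{-1}$. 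But the zeroth-order Leibniz term of the second piece is $(\partial_z u)\int_0^z\partial_x^{m}\mathcal U\,d\tilde z$, and the entry for $\mathcal U$ in \eqref{devea} carries no gain in $m$: one only has $\norm{\partial_x^m\mathcal U}_{L^2}\leq[(m-6)!]^{\sigma}\tilde\rho^{-(m-6)}\abs{\vec a}_{\tilde\rho,\sigma}=(m-6)^{\sigma}[(m-7)!]^{\sigma}\tilde\rho^{-(m-6)}\abs{\vec a}_{\tilde\rho,\sigma}$, so this contribution produces the factor $m^{\sigma}=m^{3/2}$, of which \eqref{factor} absorbs only one power of $m$. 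The estimate therefore overshoots the claimed bound by $m^{1/2}$ and does not close; your asserted worst factor ``$m^{\sigma-1/2}$'' is correct only for the $\lambda$-component. Put differently, the algebraic substitution converts the extra tangential derivative into a $m^{1/2}$-loss for the $\lambda$-part but into a full derivative-loss for the $\mathcal U$-part, so the direct scheme would close only in the analytic class $\sigma=1$, not in Gevrey $3/2$.

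This is exactly why the paper does not run the energy estimate on $\partial_x^m u$ itself. It introduces the good unknown $\psi_m=\partial_x^m u-(\partial_z u)\int_0^z\partial_x^{m-1}\mathcal U\,d\tilde z$ in \eqref{plam} and subtracts from \eqref{zu} the equation \eqref{eqint} obtained by applying $(\partial_z u)\partial_x^{m-1}$ to \eqref{mau}: the term $(\partial_x^m w)\partial_z u$ then cancels identically at the level of the evolution equation, and the highest derivative of $\mathcal U$ surviving in the remainder $L_m$ is $\partial_x^{m-1}\mathcal U$, which is of the same Gevrey order as $\comi z^\ell\partial_x^m u$. The energy estimate is performed on $\comi z^\ell\psi_m$, and $\norm{\comi z^\ell\partial_x^m u}_{L^2}$ is recovered afterwards from $\psi_m$ and $\partial_x^{m-1}\mathcal U$ using Proposition \ref{prpu}. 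The remaining ingredients of your outline (Gevrey convolution bounds for the intermediate commutators, the source term $\partial_x^m\xi$ with its $1/m$ gain, vanishing boundary terms, the parallel argument for $f$) are consistent with the paper, but note that for $f$ the analogous good unknown $\varphi_m$ produces an additional term $(\mu-\nu)(\partial_z f)\partial_z\partial_x^{m-1}\mathcal U$ when $\mu\neq\nu$, which has to be absorbed into the dissipation.
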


\begin{proof}
   Applying $\partial_x^m$ to the first equation in \eqref{mhd+}   gives
  \begin{eqnarray}\label{zu}
\big(\partial_t+u\partial_x +w\partial_z-\nu\partial_z^2\big)  \partial_x^m  u = -(\partial_x^mw)\partial_z u+\partial_x^m\xi +F_{m}
    	\end{eqnarray}
    	with
    	\begin{equation*} 
		F_{m}=-\sum_{j=1}^{m}{{m}\choose j}  (\partial_x^j u) \partial_x^{m-j+1} u   -\sum_{j=1}^{m-1}{{m}\choose j}  (\partial_x^j w) \partial_x^{m-j} \partial_z u.
	\end{equation*}
	On the other hand,  applying $(\partial_z u)\partial_x^{m-1}$ to \eqref{mau}   yields
	\begin{equation}\label{eqint}
		\big(\partial_t+u\partial_x +w\partial_z-\nu\partial_z^2\big) (\partial_z u) \int_0^z \partial_x^{m-1} \mathcal U  d\tilde z
		=- (\partial_x^mw) \partial_z u+L_m+(\partial_z \xi )\int_0^z \partial_x^{m-1} \mathcal U  d\tilde z
	\end{equation}
	with 
	\begin{equation*}
	\begin{aligned}
		L_m=-(\partial_z u) \sum_{j=1}^{m-1}{{m-1}\choose j} \Big[(\partial_x^j u) \int_0^z \partial_x^{m-j} \mathcal U d\tilde z + (\partial_x^j w) \partial_{x}^{m-1-j} \mathcal U \Big] -2\nu(\partial_z^2 u)\partial_x^{m-1} \mathcal U.
		\end{aligned}
		\end{equation*}
Subtract the  equation \eqref{eqint} by \eqref{zu} to eliminate the highest order term $(\partial_x^mw)\partial_zu$ and  this   gives the equation for
 \begin{eqnarray}\label{plam}
 	\psi_m \stackrel{\rm def}{ =}  \partial_x^m u-(\partial_zu)\int_0^z\partial_x^{m-1} \mathcal U d\tilde z.
 \end{eqnarray}
 That is, 
 \begin{equation*}  
 		\big(\partial_t+u\partial_x +w\partial_z-\nu\partial_z^2\big) \psi_m 
 		=  \partial_x^m\xi+F_{m}-L_m-(\partial_z \xi )\int_0^z \partial_x^{m-1} \mathcal U  d\tilde z,
 \end{equation*}
 and thus
 \begin{multline*}
 	\big(\partial_t+u\partial_x +w\partial_z-\nu\partial_z^2\big) \comi z^\ell\psi_m  
 		= \comi z^\ell \partial_x^m\xi-\comi z^{\ell}(\partial_z \xi )\int_0^z \partial_x^{m-1} \mathcal U  d\tilde z\\
 		+ \comi z^\ell F_{m}-\comi z^\ell L_m+w(\partial_z \comi z^\ell) \psi_m-\nu(\partial_z^2 \comi z^\ell) \psi_m-2\nu(\partial_z\comi z^\ell)\partial_z\psi_m.
 \end{multline*}
Then we take the scalar product  with $ \comi z^\ell \psi_m$ on both sides of the above equation and 
observe $ \comi z^\ell\psi_m|_{z=0}=0,$ to obtain  
 \begin{equation}\label{dps}
 \begin{aligned}
 &\frac{1}{2} \norm{\comi z^{\ell}\psi_m(t)}_{L^2}^2+\nu\int_0^{t} \big\|\partial_z\big(\comi z^{\ell}\psi_m\big)\big\|_{L^2}^2 ds =\frac{1}{2} \norm{\comi z^{\ell}\psi_m(0)}_{L^2}^2+\int_0^{t} \big(\comi z^\ell \partial_x^{m}\xi,  \  \comi z^{\ell}\psi_m\big)_{L^2}ds\\
&\quad-\int_0^{t} \Big(\comi z^{\ell}(\partial_z \xi )\int_0^z \partial_x^{m-1} \mathcal U  d\tilde z,  \  \comi z^{\ell}\psi_m\Big)_{L^2}ds+\int_0^{t} \Big(\comi z^\ell F_{m}-\comi z^\ell L_m,\  \comi z^{\ell}\psi_m\Big)_{L^2}ds\\
& \quad+\int_0^{t} \Big( w(\partial_z \comi z^\ell) \psi_m-\nu(\partial_z^2 \comi z^\ell) \psi_m-2\nu(\partial_z\comi z^\ell)\partial_z\psi_m,    \comi z^{\ell}\psi_m\Big)_{L^2}ds.
 \end{aligned}
 \end{equation}
 As for the first term on the right side,  since $
 	\comi z^{\ell} \psi_m|_{t=0}=\comi z^{\ell} \partial_x^{m}u_0 
$, we have
\begin{eqnarray*}
 \norm{\comi z^\ell  \psi_m (0)}_{L^2}^2  \leq   \frac{[(m-7)!]^{2\sigma}  }{  (2\rho_0)^{2(m-7)}} \norm{(u_0,f_0)}_{2\rho_0,\sigma}^2\leq \frac{ [(m-7)!]^{2\sigma}  }{  \rho^{2(m-7)}} \norm{(u_0,f_0)}_{2\rho_0,\sigma}^2.
\end{eqnarray*} 
The upper bound for the last three terms on the right side of \eqref{dps} was obtained in \cite{lmy} (see  the proof of \cite[Lemma 4.2]{lmy}); that is,
  \begin{eqnarray*}
 	&&\int_0^{t} \Big(\comi z^\ell F_{m}-\comi z^\ell L_{m}, \  \comi z^{\ell}\psi_m\Big)_{L^2}ds\\ 
 	&&\quad+ \int_0^{t} \Big( w(\partial_z \comi z^\ell) \psi_m-\nu(\partial_z^2 \comi z^\ell) \psi_m-2\nu(\partial_z\comi z^\ell)\partial_z\psi_m, \  \comi z^{\ell}\psi_m\Big)_{L^2}ds\\
   && \leq  CC_*^3 \frac{    [(m-7)!]^{2\sigma} } {\rho^{2(m-7)}}\bigg(\int_0^{t} \inner{\abs{\vec a(s)}_{\rho,\sigma}^2+\abs{\vec a(s)}_{\rho,\sigma}^4}ds  + \int_0^{t} \frac{\abs{\vec a(s)}_{\tilde\rho,\sigma}^2}{\tilde\rho-\rho}ds\bigg).
 \end{eqnarray*}
We omit the detail  and refer to the argument in \cite[Lemma 4.2]{lmy}.   It remains to estimate the second and third  terms on the right of \eqref{dps}.    
From  the definition of $\abs{\vec a}_{r,\sigma}$   it follows that,  for any  $ 0<r\leq \rho_0$ and  any $j\geq 7$,
\begin{equation}\label{emix}
 \norm{\comi z^{\ell}\partial_x^j u}_{L^2}+\norm{\comi z^{\ell}\partial_x^j f}_{L^2}\leq 
	 \frac{   [\inner{j-7}!]^{ \sigma}}{r^{ (j-7)}}\abs{\vec a}_{r,\sigma}, 
	 \end{equation}
	 and
	  \begin{equation}\label{uint+}
 \big\|  \comi z^{-1} \int_0^z   \partial_x^j \mathcal U  d\tilde z\big\|_{L_{x}^2(L_z^\infty)} 
 	\leq C\norm{\partial_x^j\mathcal U}_{L^2}\leq \frac{C[(j-6)!]^\sigma}{r^{j-6}}\abs{\vec a}_{r,\sigma}.
 \end{equation}
 Then  we use the definition \eqref{plam} of $\psi_m$ and the condition  \eqref{condi1} to obtain, for any  $ 0<r\leq \rho_0$ and  any $m\geq 7$,  
 \begin{eqnarray}\label{pm+}
	\norm{\comi z^{\ell}\psi_m}_{L^2}\leq  \norm{\comi z^{\ell}\partial_x^m u}_{L^2}+CC_*\norm{\partial_x^{m-1}\mathcal U}_{L^2} \leq CC_* \frac{   [(m-7)!]^{\sigma}  } {r^{m-7}} \abs{\vec a}_{r,\sigma}.
\end{eqnarray} 
 Moreover, note that
 	\begin{equation}\label{xi}
\forall\ 0<r\leq \rho_0,\ \forall\ j\geq 6,\  	 j \norm{\comi z^{\ell}\partial_x^j \xi}_{L^2} + j \norm{\comi z^{\ell}\partial_x^j \eta}_{L^2}\leq 
	 \frac{   [\inner{j-6}!]^{ \sigma}}{r^{ (j-6)}}\abs{\vec a}_{r,\sigma}, \end{equation}
 from the definition of $|\vec a|_{r,\sigma}$.  The above two inequalities  give
 \begin{eqnarray*}
\begin{aligned}
	\int_0^{t} \Big(\comi z^\ell \partial_x^{m}\xi,   \comi z^{\ell}\psi_m\Big)_{L^2}ds&\leq  CC_*\int_0^{t} \frac{1}{m}\frac{   [(m-6)!]^{\sigma}  } {\tilde\rho^{m-6}}  \frac{   [(m-7)!]^{\sigma}  } {\tilde\rho^{m-7}} \abs{\vec a(s)}_{\tilde\rho,\sigma}^2ds\\
	&\leq C C_*\frac{    [(m-7)!]^{2\sigma} } {\rho^{2(m-7)}}\int_0^{t}  \frac{\abs{\vec a(s)}_{\rho,\sigma}^2}{\tilde\rho-\rho}ds,
	\end{aligned}
\end{eqnarray*}
where in the last inequality we have used  \eqref{factor} and  $\sigma=3/2$.     Finally, using \eqref{uint+} and the condition \eqref{condi1} we have by  recalling $\xi=(f\partial_x+h\partial_z ) f$ that
\begin{eqnarray*}
&&	\big\|\comi z^{\ell}(\partial_z \xi )\int_0^z \partial_x^{m-1} \mathcal U  d\tilde z\big\|_{L^2}\\
&& \leq \norm{\comi z^{\ell+1} \partial_z (f\partial_x+h\partial_z ) f}_{L_x^\infty(L_z ^2)} \big\|\comi z^{-1}\int_0^z \partial_x^{m-1} \mathcal U  d\tilde z\big\|_{L_x^2(L_z^\infty)}	\leq    CC_* ^2\frac{   [(m-7)!]^{\sigma}  } {\rho^{m-7}} \abs{\vec a}_{\rho,\sigma}.
\end{eqnarray*}
This with \eqref{pm+} yields
\begin{eqnarray*}
	-\int_0^{t} \Big(\comi z^{\ell}(\partial_z \xi )\int_0^z \partial_x^{m-1} \mathcal U  d\tilde z,  \  \comi z^{\ell}\psi_m\Big)_{L^2}ds\leq  CC_*^3   \frac{   [(m-7)!]^{2\sigma}  }{\rho^{2(m-7)}}	 \int_0^{t}   \abs{\vec a}_{\rho,\sigma}^2 ds.  
\end{eqnarray*}
Putting the above inequalities  into \eqref{dps}  gives
 \begin{multline} \label{epsm}
	\norm{\comi z^{\ell} \psi_m(t)}_{L^2}^2 +\nu\int_0^{t} \norm{\partial_z\big(\comi z^{\ell} \psi_m\big)}_{L^2}^2 dt
 		\leq   		 \frac{  [(m-7)!]^{2\sigma}  } {\rho^{2(m-7)}}\norm{(u_0,f_0)}_{2\rho_0,\sigma}^2 \\+ C  C_*^3 		 \frac{  [(m-7)!]^{2\sigma}  } {\rho^{2(m-7)}} \bigg(    \int_0^{t}  \big(\abs{\vec a(s)}_{\rho,\sigma}^2+\abs{\vec a(s)}_{\rho,\sigma}^4 \big)ds
 		+  	\int_0^{t}   \frac{ \abs{\vec a(s)}_{\tilde\rho,\sigma}^2}{\tilde\rho-\rho} ds\bigg).
 	\end{multline}
 	Note that
 \begin{multline*}
 		\norm{\comi z^{\ell}\partial_x^m u}_{L^2}^2 \leq 2 \big\|\comi z^{\ell}\psi_m\big\|_{L^2}^2+2\big\|\comi z^{\ell}(\partial_zu)\int_0^z\partial_x^{m-1} \mathcal U d\tilde z\big\|_{L^2}^2
 		\leq 2 \norm{\comi z^{\ell}\psi_m}_{L^2}^2+CC_*^2\norm{\partial_x^{m-1}\mathcal U}_{L^2}^2
 	\end{multline*}
due to the definition \eqref{plam} of $\psi_m$. Hence, the two above estimates together with
 Proposition \ref{prpu} 
give
 	 \begin{eqnarray*}	
   	\norm{\comi z^{\ell}\partial_x^m u (t)}_{L^2}^2  		& \leq &2\frac{  [(m-7)!]^{2\sigma}  } {\rho^{2(m-7)}} \norm{(u_0, f_0)}_{2\rho_0,\sigma}^2\\
   	&&+C  C_*^3  \frac{  [(m-7)!]^{2\sigma}  } {\rho^{2(m-7)}} \bigg(  \int_0^{t}  \big(\abs{\vec a(s)}_{\rho,\sigma}^2+\abs{\vec a(s)}_{\rho,\sigma}^4 \big)ds
 		+    	\int_0^{t}   \frac{ \abs{\vec a(s)}_{\tilde\rho,\sigma}^2}{\tilde\rho-\rho} ds\bigg).
 	\end{eqnarray*}
 	Similarly,
 	 \begin{eqnarray*}
 	 \begin{aligned}	
  & 	\int_0^t\norm{\comi z^{\ell}\partial_z \partial_x^m u  }_{L^2}^2ds
   \leq \int_0^t\norm{\partial_z \big(\comi z^{\ell}\partial_x^m u \big)}_{L^2}^2ds+C	\int_0^t \norm{\comi z^{\ell} \partial_x^m u  }_{L^2}^2ds\\ 
   & \leq2 \int_0^t\norm{\partial_z \big(\comi z^{\ell}\psi_m  \big)}_{L^2}^2ds+CC_*	^2\int_0^t\inner{ \norm{  \partial_x^{m-1} \mathcal U }_{L^2}^2+ \norm{\comi z^{\ell} \partial_x^m u }_{L^2}^2}ds  \\
   	&\leq 2\frac{  [(m-7)!]^{2\sigma}  } {\rho^{2(m-7)}} \norm{(u_0, f_0)}_{2\rho_0,\sigma}^2
   	+C  C_*^3  \frac{  [(m-7)!]^{2\sigma}  } {\rho^{2(m-7)}} \bigg(  \int_0^{t}  \big(\abs{\vec a(s)}_{\rho,\sigma}^2+\abs{\vec a(s)}_{\rho,\sigma}^4 \big)ds
 		+    	\int_0^{t}   \frac{ \abs{\vec a(s)}_{\tilde\rho,\sigma}^2}{\tilde\rho-\rho} ds\bigg),
 		\end{aligned}
 	\end{eqnarray*}
 	where in the last inequality we have used  \eqref{epsm} and the estimates \eqref{emix}-\eqref{uint+}.
  Then we obtain the estimate on  $\partial_x^mu$ when $m\geq 7$, and  the estimate  for  $m\leq 6$ is straightforward. It remains to estimate $\partial_x^mf$. For this,    consider 
  \begin{eqnarray*}
  	\varphi_m= \partial_x^m f-(\partial_zf)\int_0^z\partial_x^{m-1} \mathcal U d\tilde z, 
  \end{eqnarray*}
  which satisfies $\partial_z\varphi_m|_{z=0}=0$ and solves 
    \begin{eqnarray}\label{vm}
  		\big(\partial_t+u\partial_x +w\partial_z-\mu\partial_z^2\big) \varphi_m 
 	 &=& (\mu-\nu) (\partial_zf)\partial_z\partial_x^{m-1}\mathcal U+ \partial_x^m\eta+\tilde F_{m}-\tilde L_m\nonumber\\
 	 &&-\big[\partial_z \eta-(\partial_zu)\partial_xf+(\partial_zf)\partial_xu\big]\int_0^z \partial_x^{m-1} \mathcal U  d\tilde z,
 		\end{eqnarray}
  where
  \begin{equation*} 
		\tilde F_{m}=-\sum_{j=1}^{m}{{m}\choose j}  (\partial_x^j u) \partial_x^{m-j+1} f   -\sum_{j=1}^{m-1}{{m}\choose j}  (\partial_x^j w) \partial_x^{m-j} \partial_z f,
	\end{equation*}
  and
  \begin{equation*}
	\begin{aligned}
		\tilde L_m=-(\partial_z f) \sum_{j=1}^{m-1}{{m-1}\choose j} \Big[(\partial_x^j u) \int_0^z \partial_x^{m-j} \mathcal U d\tilde z + (\partial_x^j w) \partial_{x}^{m-1-j} \mathcal U \Big] 
		-2\mu(\partial_z^2 f)\partial_x^{m-1} \mathcal U.
		\end{aligned}
		\end{equation*}
		Observe 
		\begin{multline*}
			\Big((\mu-\nu) \comi z^\ell (\partial_zf)\partial_z\partial_x^{m-1}\mathcal U, \ \comi z^\ell \varphi_m\Big)_{L^2}
			\leq \frac{1}{2}\big\|\partial_z\big(\comi z^{\ell}\varphi_m\big)\big\|_{L^2}^2+CC_*^2\inner{\norm{\partial_x^{m-1}\mathcal U}_{L^2}^2+\norm{\comi z^\ell \varphi_m}_{L^2}^2},
		\end{multline*}
		and the other terms on the right side of \eqref{vm} can be treated similarly as for  $\partial_x^mu$.  Then the estimate \eqref{epsm} also holds with $\psi_m$  replaced by $\varphi_m$.
     	The proof of the proposition  is completed.  
\end{proof}

   \subsection{Tangential derivatives of $ \xi$ and $\eta$}
We now turn to estimate  the tangential derivatives of $\xi$ and $\eta$ which are defined in
 \eqref{varpsi},  that is, 
  $\xi=f\partial_xf+h\partial_zf$ and $\eta=f\partial_xu+h\partial_zu$.
 
 \begin{proposition}
 \label{prp+3}	  
 Under Assumption \ref{assmain} we have, 
 	  for any  $t\in[0,T]$ and   for  any pair $\inner{\rho,\tilde\rho}$ with  $0<\rho<\tilde\rho< \rho_0\leq 1$,  
 	  \begin{eqnarray*}
 	  \begin{aligned}
		 &	\sup_{m\geq 6}\frac{\rho^{2(m-6)}}{   [\inner{m-6}!]^{2\sigma}}  m^2 \inner{\norm{\partial_x^{m} \xi (t)}_{L^2}^2 +\norm{\partial_x^{m} \eta (t)}_{L^2}^2}	+\sup_{m\leq 5} \inner{\norm{\partial_x^{m} \xi (t)}_{L^2}^2 +\norm{\partial_x^{m} \eta (t)}_{L^2}^2}\\
		 	& \leq C  \inner{\norm{(u_0,f_0)}_{2\rho_0,\sigma,8}^2+\norm{(u_0,f_0)}_{2\rho_0,\sigma,8}^4}+ e^{CC_*^2}\bigg(        \int_0^{t}   \big( \abs{\vec a(s)}_{ \rho,\sigma}^2+\abs{\vec a(s)}_{ \rho,\sigma}^4\big)   \,ds+ \int_0^{t}  \frac{  \abs{\vec a(s)}_{ \tilde\rho,\sigma}^2}{\tilde\rho-\rho}\,ds\bigg),
		 	\end{aligned}
\end{eqnarray*}
where $C_*\geq 1$ is the constant given in \eqref{condi1}.  
    
\end{proposition}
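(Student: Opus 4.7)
My plan is to adapt the strategy of Proposition \ref{propuv++++} to the coupled pair $(\xi,\eta)$. Writing $D_t=\partial_t+u\partial_x+w\partial_z$ and $D_m=f\partial_x+h\partial_z$, so that $\xi=D_m f$ and $\eta=D_m u$, I first derive the evolution system for $(\xi,\eta)$. Starting from $D_t f=\mu\partial_z^2 f+\eta$ and $D_t u=\nu\partial_z^2 u+\xi$, and using the commutator identity
\begin{equation*}
[D_t,D_m]\phi=\mu(\partial_z^2 f)\partial_x\phi+\mu(\partial_z^2 h)\partial_z\phi,
\end{equation*}
which is obtained by letting $D_t$ act on the coefficients $f,h$ of $D_m$, plugging in the third equation of \eqref{mhd+} and using $\partial_x u+\partial_z w=0$, a direct computation gives
\begin{equation*}
(D_t-\mu\partial_z^2)\xi=D_m\eta+R_\xi,\qquad (D_t-\nu\partial_z^2)\eta=D_m\xi+R_\eta,
\end{equation*}
where $R_\xi$ and $R_\eta$ are quadratic in first and second $z$-derivatives of $u,f,h$, each factor uniformly bounded by $C_*$ through \eqref{condi1}.

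Applying $\partial_x^m$ to these two equations produces the same worst commutator $-(\partial_x^m w)\partial_z\xi$ (respectively $-(\partial_x^m w)\partial_z\eta$) coming from $w\partial_z$ that was already neutralized in Proposition \ref{propuv++++}. I cancel it by introducing
\begin{equation*}
\Xi_m=\partial_x^m\xi-(\partial_z\xi)\int_0^z\partial_x^{m-1}\mathcal U\,d\tilde z,\qquad H_m=\partial_x^m\eta-(\partial_z\eta)\int_0^z\partial_x^{m-1}\mathcal U\,d\tilde z,
\end{equation*}
whose evolution equations, thanks to \eqref{mau}, no longer carry the $\partial_x^m w$ loss, at the price of a forcing of the form $\nu(\partial_z\xi)\partial_z\partial_x^{m-1}\mathcal U$ and its $\eta$-analogue, which are strictly better in tangential regularity and are absorbed using Proposition \ref{prpu}.

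The key novelty---and the mechanism that compensates the extra tangential-derivative loss coming from the magnetic field---is a new cancellation between the two equations. After testing the $\Xi_m$-equation against $\comi z^{2\ell}\Xi_m$ and the $H_m$-equation against $\comi z^{2\ell}H_m$ and summing, one encounters the cross contribution $(\partial_x^m D_m\eta,\partial_x^m\xi)_{L^2}+(\partial_x^m D_m\xi,\partial_x^m\eta)_{L^2}$ (up to lower-order pieces from the auxiliary correction $(\partial_z\xi)\int_0^z\partial_x^{m-1}\mathcal U d\tilde z$). Writing $\partial_x^m D_m=D_m\partial_x^m+[\partial_x^m,D_m]$, the leading part reduces by integration by parts to
\begin{equation*}
-\int_{\mathbb R_+^2}(\partial_x f+\partial_z h)(\partial_x^m\xi)(\partial_x^m\eta)\,dx\,dz+\int_{\mathbb R}\bigl[h(\partial_x^m\xi)(\partial_x^m\eta)\bigr]_{z=0}\,dx,
\end{equation*}
and both contributions vanish: the first by the divergence-free relation $\partial_x f+\partial_z h=0$, and the second by the boundary condition $h|_{z=0}=0$. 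Thus the operator $D_m$ is antisymmetric on the energy space, and this is precisely the structural cancellation that replaces any monotonicity-type assumption on $f,h$.

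The remaining terms---namely the $[\partial_x^m,D_m]$-commutator tails, the lower-order corrections from replacing $\Xi_m,H_m$ by $\partial_x^m\xi,\partial_x^m\eta$, the forcings $R_\xi,R_\eta$, and the weight-commutator terms generated by $\comi z^{\ell}$---are controlled exactly as the $F_m$ and $L_m$ pieces of Proposition \ref{propuv++++}, using \eqref{exi}, \eqref{uint+}, \eqref{xi} together with the trade \eqref{factor} of $\tilde\rho^{-1}$ against $(\tilde\rho-\rho)^{-1}$. They contribute at most $C\inner{\abs{\vec a(s)}_{\rho,\sigma}^2+\abs{\vec a(s)}_{\rho,\sigma}^4}$, a shifted term $C\abs{\vec a(s)}_{\tilde\rho,\sigma}^2/(\tilde\rho-\rho)$, and a self-interaction of size $CC_*^2\bigl(\norm{\comi z^{\ell}\Xi_m}_{L^2}^2+\norm{\comi z^{\ell}H_m}_{L^2}^2\bigr)$. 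Gr\"onwall's inequality then yields the expected prefactor $e^{CC_*^2}$, and transferring back from $\Xi_m,H_m$ to $\partial_x^m\xi,\partial_x^m\eta$ only costs one further application of Proposition \ref{prpu}. The principal technical obstacle will be the highest commutator tail $(\partial_x^m h)\partial_z\eta$ in $[\partial_x^m,D_m]\eta$: since $h=-\int_0^z\partial_x f\,d\tilde z$, this term reintroduces an $(m{+}1)$-th tangential derivative of $f$, and closing it requires both the critical Gevrey index $\sigma=3/2$ and the extra factor $m^{1/2}$ that the definition of $\abs{\vec a}_{r,\sigma}$ assigns to $\lambda$ and $\delta$, together with the $m^2$ weight assigned to $\xi,\eta$ themselves, which is exactly what makes the book-keeping feasible.
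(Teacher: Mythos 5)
Your core idea matches the paper's: derive the coupled evolution system for $(\xi,\eta)$ by applying $D_m=f\partial_x+h\partial_z$ to the $u,f$ equations, observe that the commutator $[D_t,D_m]$ produces no $\partial_x w$ term (because $D_t h$ contributes $f\partial_x w-h\partial_x u$ which kills $-f\partial_x w$ and, via $\partial_x u+\partial_z w=0$, the rest), and then exploit the antisymmetry of $D_m$ (divergence-free plus $h|_{z=0}=0$) to cancel the cross terms after testing and summing the two equations. That is exactly the paper's ``newly observed cancellation,'' and your bookkeeping of the residuals $R_\xi,R_\eta$ (quadratic, involving first $x$- and second $z$-derivatives) is correct.

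However, there are two substantive deviations, one harmless and one that leaves a genuine gap. The harmless one: you introduce $\Xi_m$ and $H_m$ to cancel the $(\partial_x^m w)\partial_z\xi$ commutator tail. The paper does \emph{not} do this for $\xi,\eta$; it works directly with $\partial_x^m\xi,\partial_x^m\eta$, tolerating the $(\partial_x^m w)\partial_z\eta$ and $(\partial_x^m h)\partial_z\xi$ pieces. These close without any cancellation trick because $\xi,\eta$ carry an extra factor $m$ in the definition of $\abs{\vec a}_{\rho,\sigma}$, so the ``loss'' of $\partial_x^{m+1}$ on $u$ (or $f$) against $m\,\norm{\partial_x^m\eta}$ yields only a single spare factor of $m$, which \eqref{factor} trades for $(\tilde\rho-\rho)^{-1}$; this is precisely $J_1$ of Lemma~\ref{lehx}. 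Your $\Xi_m,H_m$ detour could be made to work, but it is unnecessary and costs you extra bookkeeping (including a Neumann-type boundary condition $\partial_z\Xi_m|_{z=0}=0$ rather than the Dirichlet one for $\psi_m$ in Prop.~\ref{propuv++++}, and further transfer estimates back to $\partial_x^m\xi,\partial_x^m\eta$).

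The genuine gap is your treatment of the forcings $R_\xi,R_\eta$, which you claim are ``controlled exactly as the $F_m$ and $L_m$ pieces of Proposition~\ref{propuv++++}.'' They are not, and this is the hardest part of the paper's proof. After applying $\partial_x^m$ to, say, $(\partial_xf)\partial_z^2u-(\partial_zf)\partial_x\partial_zu$, the near-diagonal term $(\partial_x f)\partial_x^m\partial_z^2 u$ (and its mates with $j\le 4$) involves $m$ tangential plus two normal derivatives of $u$; its $L^2$ norm scales like $[(m-5)!]^\sigma/\rho^{m-5}$. Tested against $m^2\comi z^{2\ell}\partial_x^m\eta$ (whose norm carries a gain of $m^{-1}$), the resulting coefficient is of order $m\cdot(m-5)^\sigma\approx m^{1+\sigma}$ relative to the target $[(m-6)!]^{2\sigma}/\rho^{2(m-6)}$, and no radius-shrinking can absorb $m^{5/2}$. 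The paper circumvents this (Lemma~\ref{lex}) by first rewriting $(\partial_xf)\partial_z^2u-(\partial_zf)\partial_x\partial_zu=\delta\partial_z^2u-(\partial_zf)\partial_z\lambda-(\partial_zf)(\partial_zu)\mathcal U$ using the auxiliary $\lambda,\delta$, then, for the low-commutator piece $K_1$, applying the interpolation $\norm{\comi z^\ell\partial_x^{m-j}\partial_zu}^2\le\norm{\comi z^\ell\partial_x^{m-j-1}\partial_zu}\,\norm{\comi z^\ell\partial_x^{m-j+1}\partial_zu}$ and crucially invoking the time-integrated dissipation bound $\int_0^t\norm{\comi z^\ell\partial_z\partial_x^{m+1}u}^2\,ds$ from Proposition~\ref{propuv++++}. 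This is where the initial-data term $\norm{(u_0,f_0)}^2_{2\rho_0,\sigma,8}$ and the $e^{CC_*^2}$ prefactor actually enter the statement of the proposition, and none of it is captured by your sketch. You also identify ``the highest commutator tail $(\partial_x^m h)\partial_z\eta$'' as the principal obstacle; in fact that term is among the \emph{easier} ones (it closes by $m$-weight plus radius-shrink as noted above), while the genuine bottleneck is $R_\xi,R_\eta$.
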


 The proof relies on  a newly  observed cancellation property of $\xi$ and $\eta$.  Precisely, we use  the equations in \eqref{mhd+} for $u, f$ and $h$, to derive the equations  for  $\eta$ and $\xi$:
\begin{eqnarray*}
	\big(\partial_t  +u\partial_x +w\partial_z  -\nu \partial_z^2\big) \eta
	&=&(f\partial_x  +h\partial_z)\xi +2\nu\big[(\partial_xf) \partial_z^2 u-(\partial_zf)\partial_x\partial_z u\big]\\
	&&+(\mu-\nu)\big[(\partial_xu)\partial_z^2 f -(\partial_zu)\partial_x\partial_zf \big],
\end{eqnarray*}
and 
\begin{equation*} 
	\big(\partial_t  +u\partial_x +w\partial_z  -\mu \partial_z^2\big) \xi=(f\partial_x +h\partial_z)\eta+2\mu\big[(\partial_xf) \partial_z^2 f-(\partial_zf)\partial_x\partial_z f\big],
\end{equation*}
where the loss of tangential derivative term $\partial_xw$ is  cancelled.   
Now we apply $\comi z^\ell\partial_x^m$ to the above equations for $\xi$ and $\eta$ to get
\begin{eqnarray}\label{sypv}
\left
\{
\begin{aligned}
&	\big(\partial_t  +u\partial_x +w\partial_z  -\nu \partial_z^2\big)\comi z^\ell \partial_x^m\eta
	=(f\partial_x  +h\partial_z)\comi z^\ell\partial_x^m\xi+P_m,\\
&	\big(\partial_t  +u\partial_x +w\partial_z  -\mu \partial_z^2\big)\comi z^\ell \partial_x^m\xi=(f\partial_x +h\partial_z)\comi z^\ell\partial_x^m\eta+Q_m,
	\end{aligned}
	\right.
\end{eqnarray}
where
\begin{eqnarray*}
\begin{aligned}
P_m=&\comi z^\ell\sum_{j=1}^m{m\choose j}\big[(\partial_x^jf)\partial_x^{m-j+1}\xi+(\partial_x^jh)\partial_x^{m-j}\partial_z\xi\big]-\comi z^\ell\sum_{j=1}^m{m\choose j}\big[(\partial_x^ju)\partial_x^{m-j+1}\eta+(\partial_x^jw)\partial_x^{m-j}\partial_z\eta \big]\\
&+2\nu\comi z^\ell\partial_x^m\big[(\partial_xf) \partial_z^2 u-(\partial_zf)\partial_x\partial_z u\big]+(\mu-\nu)\comi z^\ell\partial_x^m\big[(\partial_xu)\partial_z^2 f   -(\partial_zu)\partial_x\partial_zf\big]\\
&+w(\partial_z \comi z^\ell)\partial_x^m\eta-2\nu (\partial_z \comi z^\ell)\partial_z\partial_x^m\eta-\nu(\partial_z^2\comi z^\ell)\partial_x^m
\eta	-h(\partial_z\comi z^\ell)\partial_x^m\xi,
\end{aligned}
\end{eqnarray*}
and
\begin{eqnarray*}
\begin{aligned}
Q_m=&\comi z^\ell\sum_{j=1}^m{m\choose j}\big[(\partial_x^jf)\partial_x^{m-j+1}\eta+(\partial_x^jh)\partial_x^{m-j}\partial_z\eta \big]-\comi z^\ell\sum_{j=1}^m{m\choose j}\big[(\partial_x^ju)\partial_x^{m-j+1}\xi+(\partial_x^jw)\partial_x^{m-j}\partial_z\xi \big]\\
&	+2\mu\comi z^\ell\partial_x^m\big[(\partial_xf) \partial_z^2 f -(\partial_zf)\partial_x\partial_z f\big]+w(\partial_z \comi z^\ell)\partial_x^m\xi\\
&-2\mu (\partial_z \comi z^\ell)\partial_z\partial_x^m\xi-\mu(\partial_z^2\comi z^\ell)\partial_x^m
\xi	-h(\partial_z\comi z^\ell)\partial_x^m\eta.
\end{aligned}
\end{eqnarray*}
Now we take the inner product with $m^2\comi z^\ell\partial_x^m\eta$ for the first equation in \eqref{sypv} and with $m^2\comi z^\ell\partial_x^m\xi$ for the second one, and then  take summation.   Since $\partial_z\xi|_{z=0}=\eta|_{z=0}=0$ and   the first  terms on the right side of \eqref{sypv} are cancelled by symmetry as well as divergence free condition,   we have
\begin{equation}\label{exif}
\begin{aligned}
&	\frac{m^2}{2} \inner{\norm{\comi z^\ell\partial_x^m\eta(t)}_{L^2}^2+\norm{\comi z^\ell\partial_x^m\xi(t)}_{L^2}^2}+\nu  m^2\int_0^t   \norm{\partial_z\big(\comi z^\ell\partial_x^m\eta\big)}_{L^2}^2ds+ \mu m^2\int_0^t \norm{\partial_z\big(\comi z^\ell\partial_x^m\xi\big)}_{L^2}^2 ds\\
&=\frac{m^2}{2} \inner{\norm{\comi z^\ell\partial_x^m\eta(0)}_{L^2}^2+\norm{\comi z^\ell\partial_x^m\xi(0)}_{L^2}^2
}+m^2\int_0^t   \big(P_m,\  \comi z^\ell\partial_x^m\eta\big)_{L^2} ds +m^2\int_0^t\big(Q_m,\ \comi z^\ell\partial_x^m\xi\big)_{L^2} ds.
\end{aligned}
\end{equation} 
The following lemmas are about the estimation on the  terms  in above equality.  

\begin{lemma}\label{lefx}
 Under Assumption \ref{assmain} we have, 
 	  for any  $t\in[0,T]$ and   for  any pair $\inner{\rho,\tilde\rho}$ with  $0<\rho<\tilde\rho< \rho_0\leq 1$,  
	\begin{eqnarray*}
		&&m^2\int_0^t \Big(\comi z^\ell\sum_{1\leq j\leq m}{m\choose j} (\partial_x^jf)\partial_x^{m-j+1}\xi ,\ \comi z^\ell\partial_x^{m}\eta\Big)_{L^2}ds\\
	&&	\leq  
   	C \frac{    [\inner{m-6}!]^{2\sigma}}{\rho^{2(m-6)}} \bigg( \int_0^{t}   \abs{\vec a(s)}_{ \rho,\sigma}^3  ds+  C_*^2    \int_0^{t}   \frac{\abs{\vec a(s)}_{\tilde \rho,\sigma}^2}{\tilde\rho-\rho} ds\bigg).
	\end{eqnarray*}
\end{lemma}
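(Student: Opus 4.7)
The plan is to bound each summand by H\"older's inequality, placing one of $(\partial_x^j f)$ or $(\partial_x^{m-j+1}\xi)$ in $L^\infty$ (via an a priori bound or Sobolev embedding) and the remaining two factors (including $\partial_x^m \eta$) in $L^2$, and then to resum the series using the Gevrey decay in \eqref{emix} and \eqref{xi}. I split the index range at $j=6$ and $j=m-5$, giving low, middle, and high subranges, since \eqref{xi} and \eqref{emix} require the order of differentiation to be at least $6$ and $7$ respectively.

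For the low range $1 \le j \le 6$, the factor $\partial_x^j f$ has bounded order, so \eqref{condi1} combined with the anisotropic embedding $\|g\|_{L^\infty(\mathbb R_+^2)}^4 \lesssim \|g\|_{L^2}\|\partial_x g\|_{L^2}\|\partial_z g\|_{L^2}\|\partial_x\partial_z g\|_{L^2}$ yields $\|\partial_x^j f\|_{L^\infty}\le CC_*$. The other two factors are controlled by \eqref{xi} at radius $\tilde\rho$, and the leftover polynomial factor in $m$ is absorbed into $1/(\tilde\rho-\rho)$ via \eqref{factor}, producing the $C_*^2 \abs{\vec a}_{\tilde\rho,\sigma}^2/(\tilde\rho-\rho)$ contribution. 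The high range $m-5 \le j \le m$ is symmetric: now $\partial_x^{m-j+1}\xi$ has bounded order, and from $\xi=f\partial_xf+h\partial_zf$ together with \eqref{condi1} one gets $\|\comi{z}^\ell\partial_x^{m-j+1}\xi\|_{L^\infty_xL^2_z}\le CC_*^2$; the remaining two factors are bounded by \eqref{emix} and \eqref{xi}, leading to the same type of bound.

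For the middle range $7 \le j \le m-6$ all three factors need genuine Gevrey control. Placing $\partial_x^j f$ in $L^\infty_xL^2_z$ via $\|g\|_{L^\infty_x}^2 \lesssim \|g\|_{L^2_x}\|\partial_xg\|_{L^2_x}$ costs only one extra tangential derivative, so by \eqref{emix} one has $\|\comi{z}^\ell\partial_x^j f\|_{L^\infty_xL^2_z}\lesssim [(j-6)!]^\sigma\rho^{-(j-6)}\abs{\vec a}_{\rho,\sigma}$. Combining with \eqref{xi} on the other two factors produces the series
\[
\sum_{j=7}^{m-6}\binom{m}{j}\frac{[(j-6)!]^\sigma}{\rho^{j-6}}\cdot\frac{[(m-j-5)!]^\sigma}{(m-j+1)\rho^{m-j-5}}\cdot\frac{[(m-6)!]^\sigma}{m\rho^{m-6}}\abs{\vec a}_{\rho,\sigma}^3,
\]
which, after using the identity $\binom{m}{j}[j!(m-j)!/m!]^\sigma=\binom{m}{j}^{1-\sigma}$ and the Beta-type bound $\sum_{j=1}^{m-1}\binom{m}{j}^{1-\sigma}/[j(m-j+1)]\le C/m^2$ (valid at $\sigma=3/2$), collapses to $\lesssim [(m-6)!]^{2\sigma}\rho^{-2(m-6)}\abs{\vec a}_{\rho,\sigma}^3$, once the surplus powers of $\rho$ from mismatched exponents are absorbed using $\rho\le 1$.

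The main obstacle is exactly this middle-range combinatorial reduction: the extra $1/m$ and $1/(m-j+1)$ decay afforded by the newly observed cancellation structure of $\xi,\eta$ (which is what makes \eqref{xi} carry the extra factor of $j$ on the left-hand side) must combine correctly with the binomial and the three Gevrey factorials to yield a sum uniformly bounded in $m$. This is where the borderline Gevrey index $\sigma=3/2$ is used in an essential way; without the $\xi,\eta$-cancellation providing the extra polynomial decay, the sum would diverge and the estimate would not close.
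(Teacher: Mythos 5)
Your overall plan (H\"older plus Gevrey resummation, with a split of the $j$-range that isolates the low/high endpoints so that \eqref{condi1} can be used there) is the same as the paper's, and the role you assign to the extra factors of $m$ and $(m-j+1)$ coming from the $\xi,\eta$ cancellation is the right one. But there is a genuine gap in the H\"older exponent bookkeeping in your middle (and high) range.

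In the middle range you place $\comi z^\ell\partial_x^j f$ in $L^\infty_x L^2_z$ and keep both $\comi z^\ell\partial_x^{m-j+1}\xi$ and $\comi z^\ell\partial_x^m\eta$ in $L^2=L^2_xL^2_z$. The integral in question is $\int\comi z^{2\ell}(\partial_x^jf)(\partial_x^{m-j+1}\xi)(\partial_x^m\eta)\,dx\,dz$, and in the $z$-variable the exponents $\tfrac12+\tfrac12+\tfrac12=\tfrac32>1$ are incompatible with H\"older; one of the three factors must be in $L^\infty_z$. The only candidates with $\partial_z$-control in $|\vec a|_{\rho,\sigma}$ are $u$ and $f$ (the norm records $\comi z^{\ell+k}\partial_t^i\partial_x^m\partial_z^k u$ and $f$ up to $k\le 4$), not $\xi$ or $\eta$ (for which the norm only records tangential and time derivatives, with no weighted $\partial_z\xi$, $\partial_z\eta$). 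So the $L^\infty_z$ slot must go to $\partial_x^jf$, not to $\xi$ or $\eta$. That means either (i) put $\partial_x^j f$ in full $L^\infty$, which costs one $\partial_x$ and one $\partial_z$, yielding the factor $[(j-5)!]^\sigma/\rho^{j-5}$ (not $[(j-6)!]^\sigma/\rho^{j-6}$), and keep both $\xi,\eta$ factors in $L^2$; or (ii) put $\partial_x^j f$ in $L^2_xL^\infty_z$ (costing only $\partial_z$, giving $[(j-6)!]^\sigma/\rho^{j-6}$) and then put $\comi z^\ell\partial_x^{m-j+1}\xi$ in $L^\infty_xL^2_z$, which costs a tangential derivative on $\xi$ and shifts the factorial to $[(m-j-4)!]^\sigma/[(m-j+2)\rho^{m-j-4}]$. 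The paper uses (i) for $5\le j\le[m/2]$ and (ii) for $j>[m/2]$; your displayed sum with $[(j-6)!]^\sigma/\rho^{j-6}$ together with $[(m-j-5)!]^\sigma/[(m-j+1)\rho^{m-j-5}]$ corresponds to neither option and cannot be produced by a legitimate H\"older split.

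Once the exponents are corrected, the structure of your resummation is sound: after pulling out $[(m-6)!]^\sigma/\rho^{m-6}$ and absorbing the excess $\rho$-powers by $\rho\le1$, one is left with a sum dominated by boundary values of $j$, and the extra power of $m$ coming from the weight $m^2$ on the left is eaten up by the $1/m$ and $1/(m-j+1)$ decay from \eqref{xi} together with $\binom mj^{1-\sigma}$ at $\sigma=3/2$, exactly as in the paper's computation. But the claimed intermediate bound $\sum_{j}\binom mj^{1-\sigma}/[j(m-j+1)]\le C/m^2$ is more than you need and, as stated, not obviously true (the $j\sim\text{const}$ boundary terms give order $m^{-3/2}$, not $m^{-2}$); you should replace it by the precise Stirling-type comparison the paper uses, $\binom mj [(j-5)!]^{\sigma}[(m-j-5)!]^{\sigma}\lesssim (m-6)!\,m^6 j^{-5}(m-j)^{-5}\,[(m-6)!]^{\sigma-1}m^{-4(\sigma-1)}$, which does close at $\sigma=3/2$.
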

  \begin{proof}
 	Firstly, note that 
 	 \begin{equation}\label{feu}
 	 \begin{aligned}
	&m\sum_{j=1}^{m}{{m}\choose j}  \norm{ \comi z^\ell (\partial_x^jf)\partial_x^{m-j+1}\xi}_{L^2} \\
	&\leq m	\sum_{j=1}^{[m/2]}{{m}\choose j}  \norm{ \partial_x^j f }_{L^\infty} \norm{\comi z^\ell  \partial_x^{m-j+1}\xi}_{L^2}+m \sum_{j= [m/2]+1}^m {{m}\choose j}  \norm{ \partial_x^j f}_{L_{x}^2(L_z^\infty)}\norm{\comi z^\ell \partial_x^{m-j+1} \xi}_{L_{x}^\infty(L_z^2)},
	\end{aligned}
\end{equation}
where  $[p] $ denotes the largest integer less than or equal to $p.$   By
the following Sobolev embedding  inequalities
\begin{equation*}
	\left\{
	\begin{aligned}
	&\norm{F}_{L^\infty(\mathbb R_{x})}\leq  \sqrt 2\Big(\norm{F}_{L_{x}^2}+\norm{\partial_x F}_{{L_{x}^2}}\Big),\\
	&\norm{F}_{L^\infty} \leq   2\Big(\norm{ F}_{L^2}+\norm{\partial_{x}  F}_{L^2} +\norm{  \partial_{z}F}_{L^2}+\norm{\partial_{x} \partial_{z}F}_{L^2}\Big),
	\end{aligned}
	\right. 
	\end{equation*}
and the  estimates \eqref{emix}-\eqref{xi} as well as  \eqref{condi1}, we  have
\begin{eqnarray}\label{spe}
 &&m	\sum_{j=1}^{[m/2]}{{m}\choose j}  \norm{ \partial_x^j f }_{L^\infty} \norm{  \comi z^\ell\partial_x^{m-j+1}\xi}_{L^2} \nonumber\\
  && \leq   C m \sum_{j=5}^{[m/2]}\frac{m!} {j!(m-j)!} \frac{[(j-5)!]^\sigma}{ \rho^{j-5}} \frac{1}{m-j}\frac{[(m-j-5)!]^\sigma}{\rho^{m-j-5}} \abs{\vec a}_{\rho,\sigma}^2\nonumber \\
  &&+ C C_*m\sum_{1\leq j\leq 4} \frac{m!} {j!(m-j)!} \frac{1}{m-j} \frac{[(m-j-5)!]^\sigma}{ \tilde\rho^{m-j-5}} \abs{\vec a}_{\tilde \rho,\sigma}. 
\end{eqnarray}
Direct calculation shows
 \begin{eqnarray*}
m\sum_{1\leq j\leq 4} \frac{m!} {j!(m-j)!} \frac{1}{m-j} \frac{[(m-j-5)!]^\sigma}{ \tilde\rho^{m-j-5}} \abs{\vec a}_{\tilde \rho,\sigma}  \leq C m  \frac{[(m-6)!]^\sigma}{ \tilde \rho^{m-6}} \abs{\vec a}_{\tilde\rho,\sigma}.
 \end{eqnarray*}
Moreover, by using $m/(m-j)\leq C$ for $j\leq [m/2]$,  we have
\begin{eqnarray*}
\begin{aligned}
& m \sum_{j=5}^{[m/2]}\frac{m!} {j!(m-j)!} \frac{[(j-5)!]^\sigma}{ \rho^{j-5}} \frac{1}{m-j}\frac{[(m-j-5)!]^\sigma}{\rho^{m-j-5}} \abs{\vec a}_{\rho,\sigma}^2  \\
	&\leq  C \frac{   \abs{\vec a}_{\rho,\sigma}^2}{\rho^{m-6}}  \sum_{j=5}^{[m/2]} \frac{m! [(j-5)!]^{\sigma-1} [(m-j-5)!]^{\sigma-1}} {j^5(m-j)^5}    \\
	&
	\leq   C\frac{   \abs{\vec a}_{\rho,\sigma}^2}{\rho^{m-6}} \sum_{j=5}^{[m/2]} \frac{(m-6)! m^6} {j^5m^5 }   \frac{[(m-6)!]^{\sigma-1}}{m^{4(\sigma-1)}} 
	 \leq  C \frac{  [(m-6)!]^{\sigma} } {\rho^{m-6}}\abs{\vec a}_{\rho,\sigma}^2, 
	 \end{aligned}
\end{eqnarray*}
where in the  last inequality we have used
  $\sigma=3/2.$   Combining the above inequalities with \eqref{spe} gives
\begin{equation*}
	 m	\sum_{j=1}^{[m/2]}{{m}\choose j}  \norm{ \partial_x^j f }_{L^\infty} \norm{  \comi z^\ell\partial_x^{m-j+1}\xi}_{L^2}  
	 \leq C \frac{  [(m-6)!]^{\sigma} } {\rho^{m-6}}\abs{\vec a}_{\rho,\sigma}^2+C C_*m  \frac{[(m-6)!]^\sigma}{ \tilde \rho^{m-6}} \abs{\vec a}_{\tilde\rho,\sigma}.
\end{equation*}
  Similarly, recalling $\xi=f\partial_xf+h\partial_zf,$ we have
\begin{eqnarray*}
\begin{aligned}
	&m \sum_{j= [m/2]+1}^m {{m}\choose j}  \norm{ \partial_x^j f}_{L_{x}^2(L_z^\infty)}\norm{\comi z^\ell \partial_x^{m-j+1} \xi}_{L_{x}^\infty(L_z^2)}\\
	& \leq C m \sum_{j= [m/2]+1}^{m-4} \frac{m!} {j!(m-j)!} \frac{[(j-6)!]^\sigma}{ \rho^{j-6}} \frac{1}{m-j}\frac{[(m-j-4)!]^\sigma}{\rho^{m-j-4}} \abs{\vec a}_{\rho,\sigma}^2\\
	&\quad+CC_*^2m\sum_{j= m-3}^m \frac{m!} {j!(m-j)!} \frac{[(j-6)!]^\sigma}{ \tilde\rho^{j-6}}  \abs{\vec a}_{\rho,\sigma}\\
	&\leq C \frac{  [(m-6)!]^{\sigma} } {\rho^{m-6}}\abs{\vec a}_{\rho,\sigma}^2+CC_*^2  m  \frac{[(m-6)!]^\sigma}{ \tilde \rho^{m-6}} \abs{\vec a}_{\tilde\rho,\sigma}.
	\end{aligned}
\end{eqnarray*}
Putting these inequalities into \eqref{feu} gives
\begin{equation}\label{uem}
	m\sum_{j=1}^{m}{{m}\choose j}  \norm{ \comi z^\ell (\partial_x^jf)\partial_x^{m-j+1}\xi}_{L^2}
	\leq C \frac{  [(m-6)!]^{\sigma} } {\rho^{m-6}}\abs{\vec a}_{\rho,\sigma}^2+CC_*^2  m  \frac{[(m-6)!]^\sigma}{ \tilde \rho^{m-6}} \abs{\vec a}_{\tilde\rho,\sigma}.
\end{equation}
 This with \eqref{xi}
gives 
   \begin{eqnarray*}
   \begin{aligned}
  & m^2\int_0^t \Big(\comi z^\ell\sum_{1\leq j\leq m}{m\choose j} (\partial_x^jf)\partial_x^{m-j+1}\xi ,\ \comi z^\ell\partial_x^{m}\eta\Big)_{L^2}ds\\
   &	\leq  \int_0^t m\sum_{j=1}^{m}{{m}\choose j}  \norm{ \comi z^\ell (\partial_x^jf)\partial_x^{m-j+1}\xi}_{L^2} \times \inner{m\norm{\comi z^\ell\partial_x^{m}\eta}_{L^2}}ds\\
   		&\leq 
   	C \frac{    [\inner{m-6}!]^{2\sigma}}{\rho^{2(m-6)}}   \int_0^{t}   \abs{\vec a(s)}_{ \rho,\sigma}^3  ds+  CC_*^2   \frac{    [\inner{m-6}!]^{2\sigma}}{\rho^{2(m-6)}}    \int_0^{t}  m \frac{\rho^{2(m-6)}}{\tilde\rho^{2(m-6)}} \abs{\vec a(s)}_{\tilde \rho,\sigma}^2ds \\
  & 	\leq 
   	C \frac{    [\inner{m-6}!]^{2\sigma}}{\rho^{2(m-6)}} \bigg( \int_0^{t}   \abs{\vec a(s)}_{ \rho,\sigma}^3  ds+  C_*^2    \int_0^{t}   \frac{\abs{\vec a(s)}_{\tilde \rho,\sigma}^2}{\tilde\rho-\rho} ds\bigg),
   	\end{aligned}
   \end{eqnarray*}
 where in   the last inequality we have used \eqref{factor}. 
The proof of the lemma  is completed.  
 \end{proof}
 
 \begin{lemma}\label{lehx}
 Under Assumption \ref{assmain} we have, 
 	  for any  $t\in[0,T]$ and   for  any pair $\inner{\rho,\tilde\rho}$ with  $0<\rho<\tilde\rho< \rho_0\leq 1$,  
 	\begin{eqnarray*}
 	&&	m^2\int_0^t \Big(\comi z^\ell\sum_{1\leq j\leq m}{m\choose j} (\partial_x^jh)\partial_x^{m-j}\partial_z\xi ,\ \comi z^\ell\partial_x^{m}\eta\Big)_{L^2}ds\\
 	&&	\leq   \frac{\nu}{6}m^2\int_0^{t}\big\|\partial_z\big(\comi z^\ell \partial_x^m\eta\big)\big\|_{L^2}^2ds 
	+  C\frac{  [\inner{m-6}!]^{2\sigma}}{\rho^{2(m-6)}}\int_0^{t}  \inner{\abs{\vec a(s)}_{ \rho,\sigma}^3+ \abs{\vec a(s)}_{ \rho,\sigma}^4} ds\\
	&&\qquad +CC_*^2 \frac{  [\inner{m-6}!]^{2\sigma}}{\rho^{2(m-6)}}\int_0^{t}   \frac{  \abs{\vec a(s)}_{\tilde \rho,\sigma}^2}{\tilde\rho-\rho}ds.
 	\end{eqnarray*}
 \end{lemma}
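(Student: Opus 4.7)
The plan is to eliminate the loss of derivative in the factor $\partial_z\xi$ via integration by parts in $z$. Since $h|_{z=0}=0$ by the boundary condition in \eqref{inbc}, and $\eta|_{z=0}=(f\partial_xu+h\partial_zu)|_{z=0}=0$ because $u|_{z=0}=0$ forces $\partial_xu|_{z=0}=0$ and $h|_{z=0}=0$, no boundary contribution appears. The resulting expression decomposes into three pieces according to where $\partial_z$ lands: $(\textrm{I})$ on the weight $\comi z^{2\ell}$; $(\textrm{II})$ on $\partial_x^jh$, where the divergence-free relation $\partial_zh=-\partial_xf$ converts $\partial_z\partial_x^jh$ into $-\partial_x^{j+1}f$; and $(\textrm{III})$ on $\partial_x^m\eta$.

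For $(\textrm{III})$, Cauchy--Schwarz and Young's inequality generate the absorbable dissipation term $\tfrac{\nu}{6}m^2\int_0^t\|\partial_z(\comi z^\ell\partial_x^m\eta)\|_{L^2}^2\,ds$ together with a remainder of the form $Cm^2\int_0^t\big[\sum_{j=1}^m\binom{m}{j}\|\comi z^\ell(\partial_x^jh)\partial_x^{m-j}\xi\|_{L^2}\big]^2ds$. I would split the $j$-sum at $[m/2]$: for $1\le j\le[m/2]$, put $\partial_x^jh$ in $L^\infty$, bounded at low order by $C_*$ via \eqref{condi1} and at higher orders via the representation $h=-\int_0^z\partial_xf\,d\tilde z$ together with a weighted Sobolev embedding giving a bound by $\|\comi z^\ell\partial_x^{j+1}f\|_{L^2}$, and pair it with $\|\comi z^\ell\partial_x^{m-j}\xi\|_{L^2}$ controlled through \eqref{xi}; for $[m/2]<j\le m$, I would exchange the roles, putting $\|\comi z^{-1}\partial_x^jh\|_{L_x^2(L_z^\infty)}\lesssim\|\partial_x^{j+1}f\|_{L^2}$ (an analog of \eqref{uint+}) against $\|\comi z^{\ell+1}\partial_x^{m-j}\xi\|_{L_x^\infty(L_z^2)}$. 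Term $(\textrm{II})$, after the reindexing $k=j+1$, matches the combinatorial structure treated in Lemma \ref{lefx} exactly, and I would adapt that proof verbatim. Term $(\textrm{I})$ is strictly lower order thanks to the extra $\comi z^{-1}$ produced by $|\partial_z\comi z^{2\ell}|\lesssim\comi z^{2\ell-1}$, and is handled by the same Sobolev splitting without needing the dissipation.

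The main obstacle will be the $j\approx m$ tail of $(\textrm{II})$ and the analogous tail of the remainder in $(\textrm{III})$, where $\partial_x^{m+1}f$ exceeds the order of differentiation permitted by the Gevrey norm at radius $\rho$. This is resolved by passing to the larger radius $\tilde\rho$ and invoking \eqref{factor} to absorb the cost as $m(\rho/\tilde\rho)^{m-6}\le 1/(\tilde\rho-\rho)$, producing precisely the third term in the claimed bound. Throughout, the combinatorial factorials $\binom{m}{j}[(j-6)!]^\sigma[(m-j-5)!]^\sigma$ are summed as in Lemma \ref{lefx}, using $\sigma=3/2$ and the decay estimate $\binom{m}{j}\lesssim (m-6)!\,m^6/[j^5(m-j)^5]$ to ensure convergence of the $j$-sum, with the $C_*^2$ factor appearing only through the $L^\infty$ bounds on $\partial_x^jh$ invoked for low $j$.
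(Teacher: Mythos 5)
Your overall route is the same as the paper's: integrate by parts in $z$ (no boundary term since $h|_{z=0}=0$), split according to where $\partial_z$ lands, use $\partial_z h=-\partial_x f$ for piece (II), and pay for the $j\approx m$ tail with the larger radius $\tilde\rho$ via \eqref{factor}. The combinatorial summation for $j\le m-2$ and the treatment of pieces (I) and (II) are fine.

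There is, however, a genuine gap in your piece (III) at $j=m$ (and, in attenuated form, $j=m-1$). After integration by parts you pair $\comi z^\ell(\partial_x^m h)\,\xi$ with $\partial_z(\comi z^\ell\partial_x^m\eta)$ and absorb the latter into the dissipation by Young's inequality. The price is that the remaining factor gets \emph{squared}: since $\partial_x^m h$ costs $\|\partial_x^{m+1}f\|_{L^2}\le [(m-6)!]^\sigma\tilde\rho^{-(m-6)}\abs{\vec a}_{\tilde\rho,\sigma}$, the remainder is of size
$m^2\,[(m-6)!]^{2\sigma}\tilde\rho^{-2(m-6)}\abs{\vec a}_{\tilde\rho,\sigma}^2$, and \eqref{factor} only converts $m^2(\rho/\tilde\rho)^{2(m-6)}$ into $(\tilde\rho-\rho)^{-2}$, not $(\tilde\rho-\rho)^{-1}$ (it also forces $C_*^4$ rather than $C_*^2$). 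A $(\tilde\rho-\rho)^{-2}$ loss is not just a cosmetic mismatch with the stated lemma: the final abstract Cauchy--Kowalevski/bootstrap closing of Theorem \ref{apriori} requires the first power. There is no way around this within your scheme, because the norm $\abs{\vec a}_{\rho,\sigma}$ contains $m\|\comi z^\ell\partial_x^m\eta\|_{L^2}$ but not $\|\partial_z(\comi z^\ell\partial_x^m\eta)\|_{L^2}$, so the dissipation is the only handle on that factor once you have integrated by parts. The paper avoids this by \emph{not} integrating by parts for $j=m-1,m$: those two terms are estimated directly as $\|\comi z^\ell(\partial_x^j h)\partial_x^{m-j}\partial_z\xi\|_{L^2}\cdot m\|\comi z^\ell\partial_x^m\eta\|_{L^2}$, where $\partial_x^{m-j}\partial_z\xi$ carries at most one tangential derivative and is bounded in $L_x^\infty(L_z^2)$-type norms by $CC_*^2$ using \eqref{condi1}; the top-order factors $\partial_x^{m+1}f$ and $\partial_x^m\eta$ then each appear to the first power, leaving a single excess factor of $m$ and hence a single power of $(\tilde\rho-\rho)^{-1}$. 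You should restrict your integration by parts (and hence $J_2,J_3,J_4$-type sums) to $1\le j\le m-2$ and add this separate, non-integrated estimate for $j=m-1,m$.
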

 
 \begin{proof}
 	It follows from integration by parts that
\begin{eqnarray}\label{j1j2}
		m^2\int_0^t \Big(\comi z^\ell\sum_{1\leq j\leq m}{m\choose j} (\partial_x^jh)\partial_x^{m-j}\partial_z\xi ,\ \comi z^\ell\partial_x^{m}\eta\Big)_{L^2}ds\leq \sum_{k=1}^4J_k,
\end{eqnarray}
with
\begin{eqnarray*}
J_1&=&m \int_0^{t} \sum_{j=m-1}^{m}{{m}\choose j}\norm{\comi z^\ell \big(\partial_x^j h\big)\partial_x^{m-j} \partial_z \xi}_{L^2}\times \inner{m \norm{\comi z^\ell  \partial_x^m\eta}_{L^2} }ds,\\
J_2&=&m \int_0^{t} \sum_{j=1}^{m-2}{{m}\choose j}\norm{\comi z^\ell \big(\partial_x^j h\big)\partial_x^{m-j} \xi}_{L^2}\times \inner{m \big\|\partial_z\big(\comi z^\ell \partial_x^m\eta\big)\big\|_{L^2} }ds,\\
J_3&=& m \int_0^{t} \sum_{j=1}^{m-2}{{m}\choose j}\norm{\comi z^\ell  (\partial_x^{j+1} f )\partial_x^{m-j} \xi}_{L^2}\times\inner{m\norm{ \comi z^\ell \partial_x^m\eta}_{L^2}} ds,\\
\\
J_4&=& 2m \int_0^{t} \sum_{j=1}^{m-2}{{m}\choose j}\norm{(\partial_z\comi z^\ell)  (\partial_x^{j} h )\partial_x^{m-j} \xi}_{L^2}\times\inner{m\norm{ \comi z^\ell \partial_x^m\eta}_{L^2}} ds.
\end{eqnarray*}
Note that  $
	\norm{\partial_x^j w}_{L_z^\infty}\leq  C \norm{\comi z^{\ell}\partial_x^{j+1}  u}_{L_z^2} 
$ 
for $\ell>1/2$, and similar estimate holds for   $\partial_x^j h$.  Then  it follows from \eqref{emix}  and \eqref{condi1} that, for any $0<r\leq \rho_0,$  
 \begin{equation*}  
  \norm{ \partial_x^j w}_{L_{x}^2(L_z^\infty)} +\norm{ \partial_x^j h}_{L_{x}^2(L_z^\infty)}  
   \leq
	\left\{
	\begin{aligned}
	&C \frac{   [\inner{j-6}!]^{ \sigma}}{r^{ (j-6)}}\abs{\vec a}_{r,\sigma},\quad {\rm if}~j \geq 6,\\
	&C \abs{\vec a}_{r,\sigma},   \quad {\rm if}~ j \leq 5.
	\end{aligned}
	\right.
\end{equation*} 
Thus as for the proof of  \eqref{uem}, when  $\sigma=3/2,$       we have
\begin{eqnarray*}
	m  \sum_{j=1}^{m-2}{{m}\choose j}\norm{\comi z^\ell \big(\partial_x^j h\big)\partial_x^{m-j} \xi}_{L^2}+m   \sum_{j=1}^{m-2}{{m}\choose j}\norm{\comi z^\ell  (\partial_x^{j+1} f )\partial_x^{m-j} \xi}_{L^2}\leq C\frac{    [\inner{m-6}!]^{\sigma}}{\rho^{m-6}} \abs{\vec a}_{\rho,\sigma}^2.
\end{eqnarray*} 
Thus, by  \eqref{xi}, we have 
\begin{equation*} 
	J_2 +J_3+J_4 \leq  \frac{\nu}{6}m^2\int_0^{t}\big\|\partial_z\big(\comi z^\ell \partial_x^m\eta\big)\big\|_{L^2}^2ds
	+  C\frac{  [\inner{m-6}!]^{2\sigma}}{\rho^{2(m-6)}}\int_0^{t}  \inner{\abs{\vec a(s)}_{ \rho,\sigma}^3+ \abs{\vec a(s)}_{ \rho,\sigma}^4} ds.
\end{equation*}
Finally,  by \eqref{condi1} and \eqref{xi}, direct calculation gives 
\begin{eqnarray*}
	J_1\leq CC_*^2 \frac{  [\inner{m-6}!]^{2\sigma}}{\rho^{2(m-6)}}\int_0^{t}  \frac{  \abs{\vec a(s)}_{\tilde \rho,\sigma}^2}{\tilde\rho-\rho} ds.
\end{eqnarray*}
Combining the above estimates with  \eqref{j1j2} completes the proof  of the lemma.  
 \end{proof}

  \begin{lemma}\label{lex+++}
 Under Assumption \ref{assmain} we have, 
 	  for any  $t\in[0,T]$ and   for  any pair $\inner{\rho,\tilde\rho}$ with  $0<\rho<\tilde\rho< \rho_0\leq 1$,  
 	\begin{eqnarray*}
 	\begin{aligned}
 	&	-m^2\int_0^t \Big(\comi z^\ell\sum_{1\leq j\leq m}{m\choose j}\big[(\partial_x^ju)\partial_x^{m-j+1}\eta+(\partial_x^jw)\partial_x^{m-j}\partial_z\eta \big],\ \comi z^\ell\partial_x^{m}\eta\Big)_{L^2}ds\\
 		&\leq   \frac{\nu}{6}m^2\int_0^{t}\big\|\partial_z \big(\comi z^\ell\partial_x^m\eta\big)\big\|_{L^2}^2ds 
	+  C\frac{  [\inner{m-6}!]^{2\sigma}}{\rho^{2(m-6)}}\int_0^{t}  \inner{\abs{\vec a(s)}_{ \rho,\sigma}^3+ \abs{\vec a(s)}_{ \rho,\sigma}^4} ds\\
	&\quad+CC_*^2 \frac{  [\inner{m-6}!]^{2\sigma}}{\rho^{2(m-6)}}\int_0^{t}   \frac{  \abs{\vec a(s)}_{\tilde \rho,\sigma}^2}{\tilde\rho-\rho}ds.
	\end{aligned}
 	\end{eqnarray*}
 	
 \end{lemma}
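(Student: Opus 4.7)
The plan is to mimic the strategy of Lemmas \ref{lefx} and \ref{lehx}, since the commutator being bounded has the same structure as there, only with the roles of $(f,h,\xi)$ replaced by $(u,w,\eta)$. I would split the left hand side as $I_u+I_w$ according to the two terms in the $j$-sum.

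For the convective piece $I_u=-m^2\int_0^t\Big(\comi z^\ell \sum_{j=1}^m\binom{m}{j}(\partial_x^j u)\partial_x^{m-j+1}\eta,\ \comi z^\ell\partial_x^m\eta\Big)_{L^2}ds$, I would reproduce verbatim the argument of Lemma \ref{lefx}: split the $j$-sum at $[m/2]$, estimate $\partial_x^j u$ in $L^\infty$ via the Sobolev embedding used there (and the Gevrey bound \eqref{emix}) in the low range, and in $L^2_x(L^\infty_z)$ in the high range, while $\comi z^\ell\partial_x^{m-j+1}\eta$ is controlled through \eqref{xi}. The two endpoint values $j=m-1,m$ are handled with the pointwise bound $C_*$ coming from \eqref{condi1}. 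The combinatorial sum closes for $\sigma=3/2$ exactly as in Lemma \ref{lefx}, producing the claimed $\abs{\vec a}_{\rho,\sigma}^3$ and $C_*^2\abs{\vec a}_{\tilde\rho,\sigma}^2/(\tilde\rho-\rho)$ contributions.

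For the transport-in-$z$ piece $I_w=-m^2\int_0^t\Big(\comi z^\ell \sum_{j=1}^m\binom{m}{j}(\partial_x^j w)\partial_x^{m-j}\partial_z\eta,\ \comi z^\ell\partial_x^m\eta\Big)_{L^2}ds$, the factor $\partial_z\eta$ would cost an extra normal derivative, so I would integrate by parts in $z$ as in Lemma \ref{lehx}. The boundary term vanishes because $w|_{z=0}=0$, and using the divergence-free relation $\partial_z w=-\partial_x u$ the integration by parts yields three contributions parallel to $J_2,J_3,J_4$ there: (i) a commutator term with $(\partial_x^{j+1}u)(\partial_x^{m-j}\eta)$ against $\comi z^{2\ell}\partial_x^m\eta$; (ii) a weight-derivative term with $(\partial_x^j w)(\partial_x^{m-j}\eta)$ against $(\partial_z\comi z^{2\ell})\partial_x^m\eta$, where $w$ is traded for $\partial_x u$ through $\comi z^{-\ell}w\in L^\infty_z$ (Hardy, using $\ell>1/2$); and (iii) a dissipative term with $(\partial_x^j w)(\partial_x^{m-j}\eta)$ against $\comi z^{2\ell}\partial_z\partial_x^m\eta$, which is absorbed by $\frac{\nu}{6}m^2\int_0^t\|\partial_z(\comi z^\ell\partial_x^m\eta)\|_{L^2}^2\,ds$ via Cauchy--Schwarz. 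Each of (i)--(iii) is then treated exactly as the corresponding $J_k$ in Lemma \ref{lehx}: the middle-range $1\le j\le m-2$ gives $\abs{\vec a}_{\rho,\sigma}^3+\abs{\vec a}_{\rho,\sigma}^4$ after summing the Gevrey series (again closing only because $\sigma=3/2$), while the endpoints $j=m-1,m$ produce the $CC_*^2\abs{\vec a}_{\tilde\rho,\sigma}^2/(\tilde\rho-\rho)$ piece through \eqref{condi1} and the elementary factorial inequality \eqref{factor}.

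The main difficulty is purely bookkeeping rather than conceptual: one has to verify that the symmetric convolution sums $\sum_j \binom{m}{j}[(j-6)!(m-j-6)!]^\sigma$, after picking up the additional factor of $m$ from the weighting of $\partial_x^{m-j+1}\eta$ and $\partial_x^m\eta$ inside $\abs{\vec a}_{\rho,\sigma}$, still collapse to $[(m-6)!]^\sigma/\rho^{m-6}$; this is where $\sigma=3/2$ is genuinely used, just as in Lemma \ref{lefx}. No further cancellation mechanism is required, since the delicate cancellation of $\partial_x w$ and $\partial_x h$ has already been carried out at the level of the $\xi,\eta$ equations to obtain system \eqref{sypv}, so Lemma \ref{lex+++} is ultimately of the same commutator type as Lemmas \ref{lefx} and \ref{lehx}.
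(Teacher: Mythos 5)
Your proposal is correct and takes essentially the same approach the paper has in mind: the paper explicitly omits the proof of Lemma \ref{lex+++} on the grounds that it is ``almost the same as those for Lemmas \ref{lefx} and \ref{lehx},'' and your split into the $I_u$ piece (mimicking Lemma \ref{lefx}) and the $I_w$ piece (integration by parts in $z$ as in Lemma \ref{lehx}, boundary term killed by $w|_{z=0}=0$, divergence-free used via $\partial_z\partial_x^j w=-\partial_x^{j+1}u$, Hardy for $\|\partial_x^j w\|_{L_z^\infty}\lesssim\|\comi z^\ell\partial_x^{j+1}u\|_{L_z^2}$, endpoint terms via \eqref{condi1} and \eqref{factor}) is exactly what the paper intends. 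No further comment needed.
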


 We omit the proof of this lemma because it is almost the
 same as those for Lemmas  \ref{lefx} and \ref{lehx}.

  \begin{lemma}\label{lex}
 Under Assumption \ref{assmain} we have, 
 	  for any  $t\in[0,T]$ and   for  any pair $\inner{\rho,\tilde\rho}$ with  $0<\rho<\tilde\rho< \rho_0\leq 1$,  
 	\begin{eqnarray*}
 	\begin{aligned}
 	&	2\nu m^2\int_0^t \Big(  \comi z^\ell\partial_x^m\big[(\partial_xf) \partial_z^2 u-(\partial_zf)\partial_x\partial_z u\big],\ \comi z^\ell\partial_x^{m}\eta\Big)_{L^2}ds\\
 	&\qquad+ (\mu-\nu) m^2\int_0^t \Big(  \comi z^\ell\partial_x^m\big[(\partial_xu)\partial_z^2 f-(\partial_zu)\partial_x\partial_zf  \big],\ \comi z^\ell\partial_x^{m}\eta\Big)_{L^2}ds\\
 		&\leq   \frac{\nu}{6}m^2\int_0^{t}\big\|\partial_z \big(\comi z^\ell\partial_x^m\eta\big)\big\|_{L^2}^2ds+	C\frac{    [\inner{m-6}!]^{2\sigma}}{\rho^{2(m-6)}}    \norm{(u_0,f_0)}_{2\rho_0,\sigma,8}^2\\ 
	&\quad+  e^{CC_*^2}\frac{  [\inner{m-6}!]^{2\sigma}}{\rho^{2(m-6)}}\bigg(\int_0^{t}  \inner{\abs{\vec a(s)}_{ \rho,\sigma}^2+ \abs{\vec a(s)}_{ \rho,\sigma}^4} ds+\int_0^{t}   \frac{  \abs{\vec a(s)}_{\tilde \rho,\sigma}^2}{\tilde\rho-\rho}ds\bigg).
	\end{aligned}
 	\end{eqnarray*}
 	
 \end{lemma}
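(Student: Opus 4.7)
The crux of the lemma is that, unlike the brackets treated in Lemmas \ref{lefx}--\ref{lex+++}, the two integrands $\partial_x^m[(\partial_x f)\partial_z^2 u-(\partial_z f)\partial_x\partial_z u]$ and $\partial_x^m[(\partial_x u)\partial_z^2 f-(\partial_z u)\partial_x\partial_z f]$ carry a second normal derivative on $u$ or $f$. When $\partial_x^m$ lands on the $\partial_z^2$-factor, a direct Cauchy--Schwarz leaves an $m$-overgrowth that cannot be absorbed on the right-hand side of \eqref{exif} nor by the norm $\abs{\vec a}_{\rho,\sigma}$; this is why, in contrast to the earlier lemmas, an initial-data term $\norm{(u_0,f_0)}_{2\rho_0,\sigma,8}^2$ must appear in the bound.

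My first step is the algebraic identity
\[
(\partial_x f)\partial_z^2 u-(\partial_z f)\partial_x\partial_z u=\partial_z W+(\partial_z^2 f)\partial_x u-(\partial_x\partial_z f)\partial_z u,\qquad W\stackrel{\rm def}{=}(\partial_x f)(\partial_z u)-(\partial_z f)(\partial_x u),
\]
together with its counterpart for $(\partial_x u)\partial_z^2 f-(\partial_z u)\partial_x\partial_z f$, which produces $-\partial_z W$ plus the $u\leftrightarrow f$ residuals. From \eqref{lamd} one checks the useful identity $W=\delta\,\partial_z u-\lambda\,\partial_z f$, so the Leibniz expansion of $\partial_x^m W$ benefits from the improved $m^{1/2}$-weights on $\lambda,\delta$ in $\abs{\vec a}_{\rho,\sigma}$. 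For the $\partial_z W$-piece of the lemma's integral I integrate by parts in $z$: the boundary contribution at $z=0$ vanishes since $\eta|_{z=0}=(f\partial_x u+h\partial_z u)|_{z=0}=0$ (using $u|_{z=0}=0$ and $h|_{z=0}=0$), and the remaining interior term is controlled by $\frac{\nu}{6}m^2\norm{\partial_z(\comi z^\ell\partial_x^m\eta)}_{L^2}^2$ via Young's inequality, plus a quantity involving $\norm{\comi z^\ell\partial_x^m W}_{L^2}$ that I bound by a low/high-index Leibniz split as in Lemma \ref{lefx}. This yields the $\abs{\vec a}_{\rho,\sigma}^2+\abs{\vec a}_{\rho,\sigma}^4$ and $\abs{\vec a}_{\tilde\rho,\sigma}^2/(\tilde\rho-\rho)$ contributions.

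For the residual brackets $(\partial_z^2 f)\partial_x u$, $(\partial_x\partial_z f)\partial_z u$ and their $u\leftrightarrow f$ partners, the second normal derivative persists on one factor. To avoid an $m$-overgrowth I substitute the PDEs from \eqref{mhd+}:
\[
\mu\partial_z^2 f=\partial_t f+u\partial_x f+w\partial_z f-\eta,\qquad \nu\partial_z^2 u=\partial_t u+u\partial_x u+w\partial_z u-\xi.
\]
All terms on the right other than $\partial_t f,\partial_t u$ carry at most one $\partial_z$ and are bounded by arguments parallel to Lemmas \ref{lefx}--\ref{lex+++}. The genuinely new pieces are the $\partial_t f$ and $\partial_t u$ contributions, for which I integrate by parts in the time variable $s$. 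The $s=0$ boundary term has the form
\[
m^2\bigl(\comi z^\ell(\partial_x^m f_0)(\partial_x u_0),\,\comi z^\ell\partial_x^m\eta_0\bigr)_{L^2}+(\text{low-order Leibniz siblings}),
\]
where $\eta_0=(f_0\partial_x+h_0\partial_z)u_0$ is bounded in the appropriate weighted norm by $\norm{(u_0,f_0)}_{2\rho_0,\sigma,8}^2$ via product estimates in $X_{2\rho_0,\sigma,8}$; converting the weight $(2\rho_0)^{-(m-7)}$ to $\rho^{-(m-6)}$ (using $\rho\le\rho_0$ and $\sigma=3/2$) reproduces exactly the stated $C\frac{[(m-6)!]^{2\sigma}}{\rho^{2(m-6)}}\norm{(u_0,f_0)}_{2\rho_0,\sigma,8}^2$. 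The $s=t$ boundary term is absorbed into the left-hand side of \eqref{exif} via Young's inequality; and when $\partial_s$ falls on $\partial_x^m\eta$ in the remaining integrand, I use the $\eta$-equation in \eqref{mhd+} to re-express $\partial_s\partial_x^m\eta$ in terms of spatial derivatives already controlled by $\abs{\vec a}_{\rho,\sigma}$ and $C_*$.

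The hardest part will be the combinatorial/Gevrey bookkeeping in the last step: every residual after the time integration by parts must fit the prescribed quadratic/quartic $\abs{\vec a}_{\rho,\sigma}$ pattern with at most a $1/(\tilde\rho-\rho)$ slip-loss and no surviving $m$-factor. This relies essentially on $\sigma=3/2$ together with \eqref{factor} to convert leftover powers of $m$ into $1/(\tilde\rho-\rho)$. The exponential prefactor $e^{CC_*^2}$ in the statement arises when Gronwall is used to close the absorption of the $s=t$ boundary contributions.
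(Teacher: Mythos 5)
Your algebraic identities are correct ($\partial_z W$ with $W=(\partial_x f)(\partial_z u)-(\partial_z f)(\partial_x u)=\delta\,\partial_z u-\lambda\,\partial_z f$ is a valid rewriting, and the vanishing of the boundary term via $\eta|_{z=0}=0$ is right), but the proposal has two genuine gaps. First, in the $\partial_z W$ piece the low-index Leibniz terms are not covered by "a low/high-index split as in Lemma \ref{lefx}": the $j=0$ term is $\delta\,\partial_x^m\partial_z u$ (resp.\ $\lambda\,\partial_x^m\partial_z f$), whose $L^2$ norm is only $O\big(\frac{[(m-6)!]^\sigma}{\rho^{m-6}}\abs{\vec a}_{\rho,\sigma}\big)$ with \emph{no} gain of a negative power of $m$, so after Young's inequality you are left with $m^2\norm{\comi z^\ell\partial_x^m\partial_z u}_{L^2}^2$, i.e.\ an excess factor $m^2$; a single application of \eqref{factor} absorbs only one power of $m$, so this cannot be pushed into $\abs{\vec a}_{\tilde\rho,\sigma}^2/(\tilde\rho-\rho)$. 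The paper resolves exactly this obstruction by interpolating $\norm{\partial_x^{m}\partial_z u}^2\le\norm{\partial_x^{m-1}\partial_z u}\,\norm{\partial_x^{m+1}\partial_z u}$ and invoking the time-integrated dissipation bound of Proposition \ref{propuv++++} for $\int_0^t\norm{\comi z^\ell\partial_z\partial_x^{m+1}u}^2ds$ --- and \emph{that} is where the term $C\frac{[(m-6)!]^{2\sigma}}{\rho^{2(m-6)}}\norm{(u_0,f_0)}_{2\rho_0,\sigma,8}^2$ in the statement comes from. Your attribution of the initial-data term to an $s=0$ boundary term from a time integration by parts is not the right mechanism; in fact the time integration by parts is unnecessary (the norm $\abs{\vec a}_{\rho,\sigma}$ already controls $\partial_t\partial_x^m f$ with weight $\frac{[(m-6)!]^\sigma}{\rho^{m-6}}$, so after substituting the PDE you can estimate $(\partial_t\partial_x^m f)(\partial_x u)$ directly) and is moreover hazardous, since re-expressing $\partial_s\partial_x^m\eta$ through its evolution equation produces $\nu\partial_z^2\partial_x^m\eta$, which $\abs{\vec a}_{\rho,\sigma}$ does not control.

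Second, your residual contains $(\partial_x\partial_z f)\,\partial_z u$, and its top Leibniz term $(\partial_x^{m+1}\partial_z f)(\partial_z u)$ carries $m+1$ tangential plus one normal derivative of $f$; its norm is $O\big(\frac{[(m-5)!]^\sigma}{\rho^{m-5}}\abs{\vec a}\big)$, which after multiplying by $m^2$ and by $\norm{\comi z^\ell\partial_x^m\eta}\lesssim m^{-1}\frac{[(m-6)!]^\sigma}{\rho^{m-6}}\abs{\vec a}$ leaves an excess $m^{1+\sigma}=m^{5/2}$. The PDE substitution only removes $\partial_z^2 f$, not the mixed derivative $\partial_x\partial_z f$, and the terms handled in Lemmas \ref{lefx}--\ref{lex+++} never contain such a configuration, so "parallel arguments" do not apply. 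To kill this term one must again trade $\partial_x$ for $\delta$ (writing $\partial_x\partial_z f=\partial_z\delta+(\partial_z^2f)\int_0^z\mathcal U\,d\tilde z+(\partial_z f)\mathcal U$) and integrate by parts in $z$, using the $e^{CC_*^2}$ bounds \eqref{fde} on low-order derivatives of $\lambda,\delta$ --- this is precisely the paper's decomposition $(\partial_x f)\partial_z^2u-(\partial_zf)\partial_x\partial_z u=\delta\partial_z^2u-(\partial_zf)\partial_z\lambda-(\partial_zf)(\partial_zu)\mathcal U$ and its treatment of the pieces $K_1,\dots,K_4$, and it is also the true source of the $e^{CC_*^2}$ prefactor (not a Gronwall argument). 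As written, your argument does not close.
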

 
 \begin{proof} By the definition \eqref{lamd}  of $\lambda$ and $\delta$,  we can derive
 \begin{eqnarray*}
 	(\partial_xf) \partial_z^2 u-(\partial_zf)\partial_x\partial_z u&=&\Big(\delta+(\partial_zf)\int_0^z\mathcal U d\tilde z \Big) \partial_z^2 u	 
 	-(\partial_zf)\partial_z\Big(\lambda+(\partial_zu)\int_0^z\mathcal U d\tilde z\Big)  \\ 
 &=&\delta\partial_z^2 u	-(\partial_zf)\partial_z \lambda   -(\partial_zf)(\partial_zu)\mathcal U. 
 \end{eqnarray*}
 Thus
 \begin{equation}\label{koj}
 		2\nu m^2\int_0^t \Big(  \comi z^\ell\partial_x^m\big[(\partial_xf) \partial_z^2 u-(\partial_zf)\partial_x\partial_z u\big],\ \comi z^\ell\partial_x^{m}\eta\Big)_{L^2}ds\leq \sum_{j=1}^4K_j
 \end{equation}
 with
 \begin{equation*}
 \left\{
 \begin{aligned}
 		K_1&=2\nu m^2\int_0^t \sum_{0\leq j\leq 4}{m\choose j}\Big(  \comi z^\ell   (\partial_x^{j}\delta) \partial_x^{m-j}\partial_z^2 u,\ \comi z^\ell\partial_x^{m}\eta\Big)_{L^2}ds,\\
K_2&=2\nu m^2\int_0^t \sum_{j=5}^m{m\choose j}\norm{\comi z^\ell  (\partial_x^{j}\delta) \partial_x^{m-j}\partial_z^2 u }_{L^2} \norm{\comi z^\ell\partial_x^{m}\eta}_{L^2}ds,\\
K_3 & =-2\nu m^2\int_0^t  \Big(\comi z^\ell   \partial_x^{m} \big[ 	(\partial_zf)\partial_z \lambda   \big] , \ \comi z^\ell\partial_x^{m}\eta\Big)_{L^2}ds,\\
K_4 &=  2\nu m^2\int_0^t  \big\|\comi z^\ell   \partial_x^{m} \big[  (\partial_zf)(\partial_zu)\mathcal U\big]\big\|_{L^2} \norm{\comi  z^\ell\partial_x^{m}\eta}_{L^2}ds.
 \end{aligned}
 \right.
 \end{equation*}
 To estimate $K_j, 1\leq j\leq 4,$ we need the following estimates from \cite[Lemma 5.2]{lmy}: 
  \begin{equation}\label{fmu}
	\forall\ t\in[0,T],\quad \sum_{k\leq 9}\big\|	\comi z^{-\ell}  \int_0^z\partial_x^{k}\mathcal U(t) dz \big\|_{L^2} +\sum_{\stackrel{k+j\leq 8}{0\leq j\leq 2}}\big\|	\partial_x^{k}\partial_z^j \mathcal  U(t)   \big\|_{L^2} \leq e^{CC_*^2}, 
\end{equation}
and
	\begin{equation}\label{fde}
	\forall\ t\in[0,T],\quad  \sum_{\stackrel{k+j\leq 8}{0\leq j\leq 2}}\norm{\partial_x^k\partial_z^j\lambda(t)}_{L^2}   \leq e^{CC_*^2}, 
\end{equation}
 where $C_*\geq 1$ is the constant in  \eqref{condi1}.     Then by \eqref{fmu}, \eqref{exi}, \eqref{emix} and \eqref{xi}  as well as \eqref{condi1},   following  the proof for  Lemma \ref{lefx}, we obtain
 \begin{equation}\label{k2k4}
K_2+K_4\leq  e^{CC_*^2} \frac{    [\inner{m-6}!]^{2\sigma}}{\rho^{2(m-6)}} \Big( \int_0^{t}  (\abs{\vec a(s)}_{ \rho,\sigma}^3+\abs{\vec a(s)}_{ \rho,\sigma}^4)  ds+      \int_0^{t}   \frac{\abs{\vec a(s)}_{\tilde \rho,\sigma}^2}{\tilde\rho-\rho} ds\Big).
 \end{equation}
 As for $K_3$, we first write it as 
 \begin{eqnarray*}
 \begin{aligned}
 	K_3& \leq  2\nu m^2\int_0^t \sum_{0\leq j\leq m-5}{m \choose j} \norm{\comi z^\ell      	(\partial_x^{j}\partial_zf)\partial_x^{m-j}\lambda   }_{L^2} \big\|\partial_z\big(\comi z^\ell\partial_x^{m} \eta\big)\big\|_{L^2}ds\\
 &\quad+	2\nu m^2\int_0^t \sum_{0\leq j\leq m-5}{m \choose j} \norm{\comi z^\ell    (\partial_x^{j}\partial_z^2f)\partial_x^{m-j}\lambda    }_{L^2} \norm{\comi z^\ell\partial_x^{m}  \eta}_{L^2}ds\\
 &\quad+	2\nu  m^2\int_0^t \sum_{0\leq j\leq m-5}{m \choose j} \norm{(\partial_z\comi z^\ell)    (\partial_x^{j}\partial_zf)\partial_x^{m-j}\lambda    }_{L^2} \norm{\comi z^\ell\partial_x^{m}  \eta}_{L^2}ds\\
 &\quad + 2\nu m^2\int_0^t \sum_{m-4\leq j\leq m}{m \choose j} \norm{\comi z^\ell      	(\partial_x^{j}\partial_zf)\partial_x^{m-j}\partial_z\lambda   }_{L^2} \norm{\comi z^\ell\partial_x^{m} \eta}_{L^2}ds.
 \end{aligned}
 \end{eqnarray*}
 Then by \eqref{fde}, it holds that 
  \begin{eqnarray}\label{k3}
 K_3 &\leq& \frac{\nu}{24}m^2\int_0^{t}\big\|\partial_z\big(\comi z^\ell  \partial_x^m\eta\big)\big\|_{L^2}^2ds\nonumber\\
 &+&e^{CC_* ^2} \frac{    [\inner{m-6}!]^{2\sigma}}{\rho^{2(m-6)}} \bigg( \int_0^{t}   \big(\abs{\vec a(s)}_{ \rho,\sigma}^3+\abs{\vec a(s)}_{ \rho,\sigma}^4\big)  ds+    \int_0^{t}   \frac{\abs{\vec a(s)}_{\tilde \rho,\sigma}^2}{\tilde\rho-\rho} ds\bigg).
 \end{eqnarray}
 It remains to estimate $K_1$. Again, note that 
 \begin{eqnarray*}
 \begin{aligned}
 	K_1&\leq 2\nu m^2\int_0^t \sum_{0\leq j\leq 4}{m\choose j}\norm{ \comi z^\ell   (\partial_x^{j} \delta) \partial_x^{m-j}\partial_z  u}_{L^2}\big\|\partial_z\big(\comi z^\ell\partial_x^{m}\eta\big)\big\|_{L^2}ds\\
 	&\quad+2\nu m^2\int_0^t \sum_{0\leq j\leq 4}{m\choose j}\norm{ \comi z^\ell   (\partial_x^{j}\partial_z\delta) \partial_x^{m-j}\partial_z  u}_{L^2}\norm{ \comi z^\ell\partial_x^{m}\eta}_{L^2}ds\\
 &\quad	+2\nu m^2\int_0^t \sum_{0\leq j\leq 4}{m\choose j}\norm{ (\partial_z\comi z^\ell)   (\partial_x^{j} \delta) \partial_x^{m-j}\partial_z  u}_{L^2}\norm{ \comi z^\ell\partial_x^{m}\eta}_{L^2}ds.
 \end{aligned}
 \end{eqnarray*}
Observe that the estimate \eqref{fde}  holds when $\lambda$ is replaced by $\delta$. Thus
  direct calculation shows that 
 \begin{eqnarray*}
 &&K_1\leq 	 \frac{\nu}{24}m^2\int_0^{t}\big\|\partial_z\big(\comi z^\ell\partial_x^{m}\eta\big)\big\|_{L^2}^2ds\\
 &&\qquad+  e^{CC_*^2}  m^2 \sum_{0\leq j\leq 4}\Big(\frac{m!}{j!(m-j)!}\Big)^2\int_0^t \norm{ \comi z^\ell    \partial_x^{m-j}\partial_z  u}_{L^2}^2 ds+e^{CC_*^2} \frac{    [\inner{m-6}!]^{2\sigma}}{\rho^{2(m-6)}}         \int_0^{t}   \frac{\abs{\vec a(s)}_{\tilde \rho,\sigma}^2}{\tilde\rho-\rho} ds.
 \end{eqnarray*}
 As for the second term on the right side,   we have
   \begin{eqnarray*}
 \norm{ \comi z^\ell  \partial_x^{m-j}\partial_z  u}_{L^2}^2\leq  \norm{ \comi z^\ell  \partial_x^{m-j-1}\partial_z  u}_{L^2}\norm{ \comi z^\ell  \partial_x^{m-j+1}\partial_z  u}_{L^2}.
   \end{eqnarray*}
Hence
 \begin{eqnarray*}
 \begin{aligned}
 	& e^{CC_*^2}  m^2 \sum_{0\leq j\leq 4}\Big(\frac{m!}{j!(m-j)!}\Big)^2\int_0^t \norm{ \comi z^\ell   (\partial_x^{j} \delta) \partial_x^{m-j}\partial_z  u}_{L^2}^2 ds\\
 	 &\leq   e^{CC_*^2}   m^4 \sum_{0\leq j\leq 4}\Big(\frac{m!}{j!(m-j)!}\Big)^2\int_0^t \norm{ \comi z^\ell     \partial_x^{m-j-1}\partial_z  u}_{L^2}^2  ds\\
 	&\quad  +  \sum_{0\leq j\leq 4}\Big(\frac{m!}{j!(m-j)!}\Big)^2\int_0^t  \norm{ \comi z^\ell     \partial_x^{m-j+1}\partial_z  u}_{L^2} ^2ds\\
 	&\leq    e^{CC_*^2} \frac{    [\inner{m-6}!]^{2\sigma}}{\rho^{2(m-6)}}     \int_0^{t}   \frac{\abs{\vec a(s)}_{\tilde \rho,\sigma}^2}{\tilde\rho-\rho} ds 
 	+C \frac{    [\inner{m-6}!]^{2\sigma}}{\rho^{2(m-6)}}    \norm{(u_0,f_0)}_{2\rho_0,\sigma,8}^2 \\
 	&\quad +  C  C_*^3\frac{    [\inner{m-6}!]^{2\sigma}}{\rho^{2(m-6)}}    \bigg(   \int_0^{t}  \big(\abs{\vec a(s)}_{\rho,\sigma}^2+\abs{\vec a(s)}_{\rho,\sigma}^4 \big)ds
 		+   	\int_0^{t}   \frac{ \abs{\vec a(s)}_{\tilde\rho,\sigma}^2}{\tilde\rho-\rho} ds\bigg),
 	 \end{aligned}
 \end{eqnarray*}
 where we have used  Proposition \ref{propuv++++} to control the second term in the first inequality. Thus,
 we can conclude that 
 \begin{eqnarray*}
 K_1&\leq & \frac{\nu}{24}m^2\int_0^{t}\big\|\partial_z\big(\comi z^\ell\partial_x^{m}\eta\big)\big\|_{L^2}^2ds+	C \frac{    [\inner{m-6}!]^{2\sigma}}{\rho^{2(m-6)}}    \norm{(u_0,f_0)}_{2\rho_0,\sigma}^2 \\
 	  &&+  e^{C  C_*^2}\frac{    [\inner{m-6}!]^{2\sigma}}{\rho^{2(m-6)}}    \bigg(   \int_0^{t}  \big(\abs{\vec a(s)}_{\rho,\sigma}^2+\abs{\vec a(s)}_{\rho,\sigma}^4 \big)ds
 		+   	\int_0^{t}   \frac{ \abs{\vec a(s)}_{\tilde\rho,\sigma}^2}{\tilde\rho-\rho} ds\bigg).
 \end{eqnarray*}
 Putting the above inequality and the estimates \eqref{k2k4}-\eqref{k3} into  \eqref{koj} gives
 the  upper bound for the first term on the left side in Lemma \ref{lex}.
 The second term can be estimated similarly and we omit the detail. Then the  proof of the lemma  is completed. 
  \end{proof}

  \begin{lemma}\label{++lex+++}
 Under Assumption \ref{assmain} we have, 
 	  for any  $t\in[0,T]$ and   for  any pair $\inner{\rho,\tilde\rho}$ with  $0<\rho<\tilde\rho< \rho_0\leq 1$,  
 	\begin{eqnarray*}
 	&& m^2\int_0^t \Big(  w(\partial_z \comi z^\ell)\partial_x^m\eta-2\nu (\partial_z \comi z^\ell)\partial_z\partial_x^m\eta,\ \comi z^\ell\partial_x^{m}\eta\Big)_{L^2}ds\\
 	&&\qquad+m^2\int_0^t \Big(   -\nu(\partial_z^2\comi z^\ell)\partial_x^m
\eta	-h(\partial_z\comi z^\ell)\partial_x^m\xi,\ \comi z^\ell\partial_x^{m}\eta\Big)_{L^2}ds\\
 		&&\leq    C\frac{  [\inner{m-6}!]^{2\sigma}}{\rho^{2(m-6)}}\int_0^{t}  \inner{\abs{\vec a(s)}_{ \rho,\sigma}^2+ \abs{\vec a(s)}_{ \rho,\sigma}^3} ds.
 	\end{eqnarray*}
 	
 \end{lemma}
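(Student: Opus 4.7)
The plan is to dispose of the four inner products individually, each being a lower-order term that arises from commuting the weight $\comi z^\ell$ with the dissipation and the transport by $w,h$. Two elementary ingredients will do the entire job: (i) the pointwise bounds $|\partial_z\comi z^\ell|\le C\comi z^{\ell-1}$ and $|\partial_z^2\comi z^\ell|\le C\comi z^{\ell-2}$, together with the essential boundary value $\partial_z\comi z^\ell|_{z=0}=0$; and (ii) the $L^\infty$ bounds $\norm{w}_{L^\infty}+\norm{h}_{L^\infty}\le C\abs{\vec a}_{\rho,\sigma}$. The latter follows from $w=-\int_0^z\partial_x u\,d\tilde z$ and $h=-\int_0^z\partial_x f\,d\tilde z$ via a one-dimensional Hardy-type control $|w(x,z)|\le C\norm{\comi z^\ell\partial_x u(x,\cdot)}_{L^2_z}$ (valid since $\ell>1/2$), followed by Sobolev embedding in $x$ against the low-order weighted part of $\abs{\vec a}_{\rho,\sigma}$. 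Combined with estimate \eqref{xi}, which gives $m\norm{\comi z^\ell\partial_x^m\eta}_{L^2}+m\norm{\comi z^\ell\partial_x^m\xi}_{L^2}\le\frac{[(m-6)!]^\sigma}{\rho^{m-6}}\abs{\vec a}_{\rho,\sigma}$, this is all the ammunition needed; in particular, no appeal to the constant $C_*$ or to the $\mathcal U$-structure is required.

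For the first and fourth terms (the $w$- and $h$-contributions) I would simply extract $\norm{w}_{L^\infty}$, resp.\ $\norm{h}_{L^\infty}$, use $|\partial_z\comi z^\ell|\le C\comi z^\ell$, and apply Cauchy--Schwarz to bound them by
\[
Cm^2\int_0^t\norm{w}_{L^\infty}\norm{\comi z^\ell\partial_x^m\eta}_{L^2}^2\,ds\ \ \text{and}\ \ Cm^2\int_0^t\norm{h}_{L^\infty}\norm{\comi z^\ell\partial_x^m\xi}_{L^2}\norm{\comi z^\ell\partial_x^m\eta}_{L^2}\,ds.
\]
Plugging in \eqref{xi} together with the $L^\infty$ bounds on $w,h$ produces in each case the cubic term $C\frac{[(m-6)!]^{2\sigma}}{\rho^{2(m-6)}}\int_0^t\abs{\vec a(s)}_{\rho,\sigma}^3\,ds$. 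The third term, with $\partial_z^2\comi z^\ell$, is the simplest: $|\partial_z^2\comi z^\ell\cdot\comi z^\ell|\le C\comi z^{2\ell-2}\le C\comi z^{2\ell}$, so after integrating in time and applying \eqref{xi} it collapses into the quadratic term $C\frac{[(m-6)!]^{2\sigma}}{\rho^{2(m-6)}}\int_0^t\abs{\vec a(s)}_{\rho,\sigma}^2\,ds$.

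The only slightly delicate piece is the second term, which involves $\partial_z\partial_x^m\eta$, a derivative not directly recorded in $\abs{\vec a}_{\rho,\sigma}$. Here I would integrate by parts in $z$, rewriting
\[
-2\nu m^2\bigl((\partial_z\comi z^\ell)\partial_z\partial_x^m\eta,\,\comi z^\ell\partial_x^m\eta\bigr)_{L^2}=-\nu m^2\bigl(\partial_z\comi z^\ell\cdot\comi z^\ell,\,\partial_z(\partial_x^m\eta)^2\bigr)_{L^2}.
\]
Since $\partial_z\comi z^\ell$ vanishes at $z=0$, the boundary contribution disappears, and the resulting bulk integral $\nu m^2\int_{\mathbb R_+^2}\partial_z(\partial_z\comi z^\ell\cdot\comi z^\ell)(\partial_x^m\eta)^2$ is, via $|\partial_z(\partial_z\comi z^\ell\cdot\comi z^\ell)|\le C\comi z^{2\ell-2}$, dominated once again by $Cm^2\norm{\comi z^\ell\partial_x^m\eta}_{L^2}^2$, yielding a quadratic contribution. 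Summing the four estimates produces the claimed bound.

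The main point to emphasise is that Lemma \ref{++lex+++} is genuinely routine compared with Lemmas \ref{lefx}--\ref{lex+++}: it collects only the benign weight-commutator and transport terms, none of which is affected by the loss of tangential derivative handled by the new cancellation mechanism. The only care required is the vanishing of $\partial_z\comi z^\ell$ at $z=0$, which kills the would-be boundary term in the $\partial_z\partial_x^m\eta$ integration by parts and lets us avoid introducing any dissipation absorption or $C_*$-dependent prefactor.
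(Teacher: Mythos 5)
Your argument is correct and is exactly the ``direct calculation using \eqref{xi}'' that the paper invokes while omitting the proof: the pointwise weight bounds plus $\norm{w}_{L^\infty}+\norm{h}_{L^\infty}\le C\abs{\vec a}_{\rho,\sigma}$ give the quadratic and cubic contributions, and the only nontrivial step --- the integration by parts in $z$ for the $(\partial_z\comi z^\ell)\partial_z\partial_x^m\eta$ term, with the boundary contribution killed by $\partial_z\comi z^\ell|_{z=0}=0$ --- is handled correctly and is needed to avoid any dissipation or $C_*$-dependent term on the right-hand side, consistent with the lemma as stated.
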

Since the proof of this lemma follows from  direct calculation and \eqref{xi}, we omit it for brevity. 
And now we are ready to prove Proposition \ref{prp+3}.

 \begin{proof}
 	[Proof of Proposition \ref{prp+3}]
 	In view of the  representation of $P_m$ given in \eqref{sypv},  we combine the estimates in Lemmas \ref{lefx}-\ref{++lex+++} to conclude
 	\begin{multline*}
 		m^2\int_0^t   \inner{P_m,\  \comi z^\ell\partial_x^m\eta}_{L^2} ds
 		\leq \frac{\nu}{2}m^2\int_0^{t}\big\|\partial_z\big(\comi z^\ell\partial_x^{m}\eta\big)\big\|_{L^2}^2ds+	C\frac{    [\inner{m-6}!]^{2\sigma}}{\rho^{2(m-6)}}    \norm{(u_0,f_0)}_{2\rho_0,\sigma,8}^2 \\
 	  +  e^{C  C_*^2}\frac{    [\inner{m-6}!]^{2\sigma}}{\rho^{2(m-6)}}    \bigg(   \int_0^{t}  \big(\abs{\vec a(s)}_{\rho,\sigma}^2+\abs{\vec a(s)}_{\rho,\sigma}^4 \big)ds
 		+   	\int_0^{t}   \frac{ \abs{\vec a(s)}_{\tilde\rho,\sigma}^2}{\tilde\rho-\rho} ds\bigg).
 	\end{multline*}
 	Similar  upper bound holds for 
 	\begin{eqnarray*}
 		m^2\int_0^t\inner{Q_m,\ \comi z^\ell\partial_x^m\xi}_{L^2} ds.
 	\end{eqnarray*} 
 	Then by  \eqref{exif}, we have
 	\begin{multline*}
 		 m^2  \inner{\norm{\comi z^\ell\partial_x^m\eta(t)}_{L^2}^2+\norm{\comi z^\ell\partial_x^m\xi(t)}_{L^2}^2}\\
 		 \leq   m^2  \inner{\norm{\comi z^\ell\partial_x^m\eta(0)}_{L^2}^2+\norm{\comi z^\ell\partial_x^m\xi(0)}_{L^2}^2
}+	C \frac{    [\inner{m-6}!]^{2\sigma}}{\rho^{2(m-6)}}    \norm{(u_0,f_0)}_{2\rho_0,\sigma,8}^2 \\
 	  +  e^{C  C_*^2}\frac{    [\inner{m-6}!]^{2\sigma}}{\rho^{2(m-6)}}    \bigg(   \int_0^{t}  \big(\abs{\vec a(s)}_{\rho,\sigma}^2+\abs{\vec a(s)}_{\rho,\sigma}^4 \big)ds
 		+   	\int_0^{t}   \frac{ \abs{\vec a(s)}_{\tilde\rho,\sigma}^2}{\tilde\rho-\rho} ds\bigg).
 	\end{multline*}
 	 Moreover,  following the argument for proving Lemma \ref{lefx} we can derive 
 	 \begin{multline*}
 	 m \norm{\comi z^\ell\partial_x^m\xi(0)}_{L^2}
 	 	\leq m \sum_{j\leq m}{m\choose j}\inner{\norm{\comi z^\ell(\partial_x^{j}f_0 )\partial_x^{m-j+1}f_0}_{L^2}+\norm{\comi z^\ell(\partial_x^{j}h(0))\partial_x^{m-j}\partial_zf_0}_{L^2}}\\
 	 	\leq C m \frac{    [\inner{m-6}!]^{\sigma}}{(2\rho_0)^{m-6}}  \norm{(u_0,f_0)}_{ 2\rho_0,\sigma,8}^2  \leq C _1\frac{    [\inner{m-6}!]^{\sigma}}{\rho^{m-6}}  \norm{(u_0,f_0)}_{2\rho_0,\sigma,8}^2,
 	 \end{multline*}
 	 where in the last inequality we have used the fact that $\rho\leq\rho_0$ as well as  $h|_{t=0}=-\int_0^z \partial_xf_0d\tilde z$.   Similar upper bound holds for $m \norm{\comi z^\ell\partial_x^m\eta(0)}_{L^2}$.  Thus combining the above inequalities yields the desired estimate in Proposition \ref{prp+3}  for $m\geq 6$.  The estimate for $m\leq 5$ is straightforward so that the proof of the proposition is completed.
 \end{proof}

 \subsection{Tangential derivatives of  $\lambda$ and $\delta$}\label{sublam}
  The estimate on the tangential derivatives of $\lambda$ and $\delta$
  defined in \eqref{lamd} is given in the following proposition.

 \begin{proposition}\label{prla}
 Under Assumption \ref{assmain} we have, 
 	  for any  $t\in[0,T]$ and   for  any pair $\inner{\rho,\tilde\rho}$ with  $0<\rho<\tilde\rho< \rho_0\leq 1$,  
 	  \begin{multline*}
 \sup_{m\geq 6} \frac{\rho^{2(m-6)}}	{  [\inner{m-6}!]^{2\sigma}}m \norm{  \partial_x^m \lambda}_{L^2}^2 + \sup_{m\leq 5}  \norm{ \partial_x^m \lambda}_{L^2}^2 \\
	 \leq   C \norm{(u_0,f_0)}_{2\rho_0,\sigma,8}^2+  C \int_0^{t}  \inner{\abs{\vec a(s)}_{ \rho,\sigma}^3+ \abs{\vec a(s)}_{ \rho,\sigma}^4} ds 
	+e^{CC_*^2}  \int_0^{t}   \frac{  \abs{\vec a(s)}_{\tilde \rho,\sigma}^2}{\tilde\rho-\rho}ds.
  	\end{multline*}
  	The above estimate also holds when $\lambda$ is replaced by $\delta$. 
 \end{proposition}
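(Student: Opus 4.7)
The plan is to mirror the proof of Proposition \ref{prpu}, exploiting the relation \eqref{ulm} which says that $\lambda$ satisfies a transport--diffusion equation whose leading source is $\partial_x\xi$. Concretely, applying $\partial_x$ to the first equation of \eqref{mhd+} and subtracting $(\partial_z u)$ times the $\partial_z$-derivative of \eqref{mau}, a calculation that exploits the cancellation $(\partial_x w)\partial_z u-(\partial_z u)\partial_x w=0$ coming from the combined velocity/$\mathcal U$ commutators, yields
\[
\big(\partial_t+u\partial_x+w\partial_z-\nu\partial_z^2\big)\lambda \;=\; \partial_x\xi + \mathcal R,
\]
where the remainder $\mathcal R$ collects quadratic terms such as $(\partial_x u)^2$, $(\partial_z\xi)\int_0^z\mathcal U\,d\tilde z$ and $(\partial_z^2 u)\mathcal U$, each carrying at most one extra tangential derivative. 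Since $\lambda|_{z=0}=\partial_x u|_{z=0}=0$ by the no-slip condition, I would then apply $\partial_x^m$ to the displayed equation for $m\geq 6$, take the $L^2$ inner product against $m\,\partial_x^m\lambda$, and integrate over $[0,t]$ without boundary contributions from the viscous term.

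The main obstacle is the top-order source
\[
m\int_0^t(\partial_x^{m+1}\xi,\,\partial_x^m\lambda)_{L^2}\,ds,
\]
because one extra tangential derivative falls on $\xi$ whose weight in \eqref{devea} is only $m$. The closure rests on the precise weight calibration in Definition \ref{deoa}: combining \eqref{xi} at index $m+1$ with \eqref{exi} at index $m$ gives
\[
m\,(\partial_x^{m+1}\xi,\partial_x^m\lambda)_{L^2}
\leq \frac{Cm}{(m+1)\sqrt m}\,\frac{[(m-5)!]^{\sigma}}{\tilde\rho^{m-5}}\,\frac{[(m-6)!]^{\sigma}}{\tilde\rho^{m-6}}\,|\vec a|_{\tilde\rho,\sigma}^2,
\]
and since $\sigma=3/2$ the arithmetic factor $(m-5)^{3/2} m^{-1/2}\leq Cm$ combines with the factorial identity \eqref{factor} to produce a clean bound $C[(m-6)!]^{2\sigma}\rho^{-2(m-6)}|\vec a|_{\tilde\rho,\sigma}^2/(\tilde\rho-\rho)$. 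It is precisely the gap of one power of $m$ between the weights of $\xi$ and $\lambda$ in Definition \ref{deoa} that makes this closure possible, and this step is the heart of the argument.

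The transport commutators $m\,{m\choose j}\big((\partial_x^j u)\partial_x^{m-j+1}\lambda+(\partial_x^j w)\partial_x^{m-j}\partial_z\lambda,\,\partial_x^m\lambda\big)_{L^2}$ are treated exactly as in the proof of Proposition \ref{prpu}: split the Leibniz sum at $j=[m/2]$, apply Sobolev embedding together with \eqref{emix} and \eqref{condi1}, and absorb the dangerous endpoint $j=m$ contribution of $(\partial_x^m w)\partial_z\lambda$ against $\|\partial_z\lambda\|_{L^\infty}\leq e^{CC_*^2}$ coming from \eqref{fde}. The remainder $m(\partial_x^m\mathcal R,\partial_x^m\lambda)_{L^2}$ is strictly lower order: each summand is either a bounded auxiliary factor from \eqref{fmu}--\eqref{fde} paired with a quantity estimated by \eqref{exi} and Proposition \ref{prpu}, or else of the product type $\|\cdot\|_{L_x^2(L_z^\infty)}\times\|\comi z^{-1}\int_0^z\partial_x^j\mathcal U\,d\tilde z\|_{L_x^2(L_z^\infty)}$ bounded via \eqref{uint+}, producing the cubic and quartic terms in $|\vec a|_{\rho,\sigma}$ along with the $e^{CC_*^2}(\tilde\rho-\rho)^{-1}$ term. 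The initial datum yields $m\|\partial_x^m\lambda(0)\|_{L^2}^2\leq C[(m-6)!]^{2\sigma}\rho^{-2(m-6)}\|(u_0,f_0)\|_{2\rho_0,\sigma,8}^2$, and the range $m\leq 5$ is immediate. The same strategy applies to $\delta$: the source becomes $\partial_x\eta$, the viscous-mismatch term $(\mu-\nu)(\partial_z f)\partial_z\partial_x^{m-1}\mathcal U$ appears just as in \eqref{vm}, and the integration by parts is justified by the Neumann trace $\partial_z\delta|_{z=0}=0$, which follows from $\partial_z f|_{z=0}=0$.
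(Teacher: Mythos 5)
Your proposal is correct and follows essentially the same route as the paper: the same equation $\big(\partial_t+u\partial_x+w\partial_z-\nu\partial_z^2\big)\lambda=\partial_x\xi-(\partial_xu)\partial_xu-(\partial_z\xi)\int_0^z\mathcal U\,d\tilde z+2\nu(\partial_z^2u)\mathcal U$, the same weighted energy estimate against $m\,\partial_x^m\lambda$ using $\lambda|_{z=0}=0$, and the identical weight-calibration argument for the critical term $m(\partial_x^{m+1}\xi,\partial_x^m\lambda)_{L^2}$ combining \eqref{xi}, \eqref{exi}, $\sigma=3/2$ and \eqref{factor}. The treatment of the commutators/remainders and the remark that $\partial_z\delta|_{z=0}=0$ handles the $\delta$ case likewise match the paper's proof.
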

 
 \begin{proof}
We apply  $\partial_x$ to the equation for $u$ in \eqref{mhd+} and multiply  the equation \eqref{mau}  by $\partial_z u$, and then the subtraction of these two equations gives   the equation for $\lambda$:
 	\begin{eqnarray*} 
	\big(\partial_t  +u\partial_x +w\partial_z  -\nu\partial_z^2\big)\lambda
	= \partial_x\xi  -(\partial_xu)\partial_xu-  (\partial_z\xi )  \int_0^z\mathcal U d\tilde z+2\nu(\partial_z^2 u)\mathcal U.
\end{eqnarray*}
Thus
\begin{eqnarray*}
 	\big(\partial_t+u\partial_x +w\partial_z-\nu\partial_z^2 \big)   \partial_x^m \lambda 
	  &=&  \partial_x^{m+1}\xi- \partial_x^m\Big[(\partial_xu)\partial_xu+(\partial_z\xi )  \int_0^z\mathcal U d\tilde z -2\nu(\partial_z^2 u)\mathcal U\Big]\\
	  &&- \sum_{j=1}^m{m\choose j} \Big[(\partial_x^j u)\partial_x^{m-j+1}\lambda+ (\partial_x^j w)\partial_x^{m-j}\partial_z\lambda\Big].
	\end{eqnarray*}
Taking inner product with $m   \partial_x^m \lambda$ and observing $\lambda|_{z=0}=0,$ we obtain
\begin{eqnarray}\label{dl}
\begin{aligned}
	&\frac{m}{2}\norm{  \partial_x^m \lambda(t)}_{L^2}^2+m\nu\int_0^t\norm{\partial_z  \partial_x^m \lambda(s) }_{L^2}^2ds\\
	&\leq \frac{m}{2}\norm{ \partial_x^m \lambda(0)}_{L^2}^2+  m\int_0^t\big(  \partial_x^{m+1}\xi,\   \partial_x^m \lambda\big)_{L^2}ds\\
	&\quad -  m\int_0^t\Big(  \partial_x^m\Big[(\partial_xu)\partial_xu+(\partial_z\xi )  \int_0^z\mathcal U d\tilde z -2\nu(\partial_z^2 u)\mathcal U\Big],\   \partial_x^m \lambda\Big)_{L^2}ds\\
	  &\quad-m \int_0^t\Big(  
	   \sum_{j=1}^m{m\choose j}  \Big[(\partial_x^j u)\partial_x^{m-j+1}\lambda+ (\partial_x^j w)\partial_x^{m-j}\partial_z\lambda\Big],\  \partial_x^m \lambda\Big)_{L^2}ds.
	  \end{aligned}
\end{eqnarray}
By \eqref{exi} and \eqref{xi}, when $\sigma=3/2$ we have
 \begin{eqnarray*}
 \begin{aligned}
& m\int_0^t\big(  \partial_x^{m+1}\xi,\   \partial_x^m \lambda\big)_{L^2}ds\leq          \int_0^{t}  m^{-1/2} \frac{ [\inner{m-5}!]^{\sigma}}{\tilde\rho^{m-5}}\frac{ [\inner{m-6}!]^{\sigma}}{\tilde\rho^{m-6}} \abs{\vec a(s)}_{\tilde \rho,\sigma}^2ds\\
 &\leq  C     \frac{    [\inner{m-6}!]^{2\sigma}}{\rho^{2(m-6)}}    \int_0^{t} \frac{ m^{\sigma-\frac{1}{2}}}{\tilde\rho} \frac{\rho^{2(m-6)}}{\tilde\rho^{2(m-6)}} \abs{\vec a(s)}_{\tilde \rho,\sigma}^2ds \leq  C     \frac{    [\inner{m-6}!]^{2\sigma}}{\rho^{2(m-6)}}    \int_0^{t}   \frac{ \abs{\vec a(s)}_{\tilde \rho,\sigma}^2}{\tilde\rho-\rho}ds,
 \end{aligned}
 \end{eqnarray*} 
 where in the last inequality we have used  \eqref{factor}.  Moreover, similar to the proofs of  Lemmas \ref{lefx} and \ref{lehx}, we obtain
 \begin{eqnarray*}
 \begin{aligned}
 &-  m\int_0^t\Big(  \partial_x^m\Big[(\partial_xu)\partial_xu+(\partial_z\xi )  \int_0^z\mathcal U d\tilde z -2\nu(\partial_z^2 u)\mathcal U\Big],\   \partial_x^m \lambda\Big)_{L^2}ds\\
	  &\qquad-m \int_0^t\Big(  
	   \sum_{j=1}^m{m\choose j}  \Big[(\partial_x^j u)\partial_x^{m-j+1}\lambda+ (\partial_x^j w)\partial_x^{m-j}\partial_z\lambda\Big],\  \partial_x^m \lambda\Big)_{L^2}ds\\
	   &\leq    \frac{\nu}{2} m \int_0^{t}\norm{ \partial_z  \partial_x^m\lambda}_{L^2}^2ds 
	+  C\frac{  [\inner{m-6}!]^{2\sigma}}{\rho^{2(m-6)}}\int_0^{t}  \inner{\abs{\vec a(s)}_{ \rho,\sigma}^3+ \abs{\vec a(s)}_{ \rho,\sigma}^4} ds+e^{CC_*^2} \frac{  [\inner{m-6}!]^{2\sigma}}{\rho^{2(m-6)}}\int_0^{t}   \frac{  \abs{\vec a(s)}_{\tilde \rho,\sigma}^2}{\tilde\rho-\rho}ds.
	  \end{aligned}
 \end{eqnarray*}
 Combining the above inequalities with \eqref{dl} gives
   \begin{eqnarray*}
  	&&m\norm{  \partial_x^m \lambda}_{L^2}^2 
	 \leq  m\norm{  \partial_x^m \lambda(0)}_{L^2}^2+  C\frac{  [\inner{m-6}!]^{2\sigma}}{\rho^{2(m-6)}}\int_0^{t}  \inner{\abs{\vec a(s)}_{ \rho,\sigma}^3+ \abs{\vec a(s)}_{ \rho,\sigma}^4} ds\\
	&&\qquad +e^{CC_*^2} \frac{  [\inner{m-6}!]^{2\sigma}}{\rho^{2(m-6)}}\int_0^{t}   \frac{  \abs{\vec a(s)}_{\tilde \rho,\sigma}^2}{\tilde\rho-\rho}ds.
  	\end{eqnarray*}
Note that $\lambda|_{t=0}=\partial_xu_0$. Hence 
\begin{eqnarray*}
	m \norm{  \partial_x^m \lambda (0)}_{L^2}^2  \leq   m \frac{[(m-6)!]^{2\sigma}  }{  (2\rho_0)^{2(m-6)}} \norm{(u_0,f_0)}_{2\rho_0,\sigma,8}^2\leq C \frac{ [(m-6)!]^{2\sigma}  }{  \rho^{2(m-6)}} \norm{(u_0,f_0)}_{2\rho_0,\sigma,8}^2
\end{eqnarray*}
because $\rho\leq\rho_0.$
Thus  we obtain the desired estimate on $\lambda$  for $m\geq 6$. Again, the estimate
for $m\leq 5$ is straightforward. Note that the upper bound for $\delta$ can be derived similarly because
there is  no non-zero boundary terms in the integration by parts due to the fact that $\partial_z\delta|_{z=0}=0$.  	The  proof of the proposition is  completed. 
 \end{proof}

 \subsection{Time  derivatives} 
 The estimate  involving  $t$-derivatives is stated as follows.  Note it is only in this estimate
  that we need  the normal derivatives of the initial data $u_0$ and $f_0$ up to the 8th order.   
 
  \begin{proposition}\label{lemtime}
 	 Under Assumption \ref{assmain} we have,  
 	  for any    $t\in[0,T]$ and  
  any pair $\inner{\rho,\tilde\rho}$ with $0<\rho<\tilde\rho<\rho_0\leq 1$,
   	 \begin{eqnarray*}
   	 \begin{aligned}
 & \sup_{\stackrel{1\leq i\leq 4}{ m+i  \geq 7}}  \frac{\rho^{2(m+i-7)}} {  [(m+i-7)!]^{2\sigma}  }  	\inner{\norm{\comi z^{\ell}\partial_t^i\partial_x^m  u (t)}_{L^2}^2+\norm{\comi z^{\ell}\partial_t^i\partial_x^m  f (t)}_{L^2}^2}    \\
  &\qquad+ \sup_{\stackrel{1\leq i\leq 4}{ m+i  \geq 7}}  \frac{\rho^{2(m+i-7)}} {  [(m+i-7)!]^{2\sigma}  } \int_0^t 	\inner{\norm{\comi z^{\ell}\partial_z\partial_t^i\partial_x^m  u }_{L^2}^2+\norm{\comi z^{\ell}\partial_z\partial_t^i\partial_x^m  f  }_{L^2}^2}ds    \\
 &\qquad+\sup_{\stackrel{1\leq i\leq 4}{ m+i\geq 6}}   \frac{\rho^{m+i-6}}{[\inner{m+i-6}!]^{\sigma}}     \norm{ \partial_t^i\partial_x^m\mathcal U(t)}_{L^2} \\
	   &\qquad+\sup_{\stackrel{1\leq i\leq 4}{ m+i\geq 6}}    \frac{\rho^{m+i-6}}{[\inner{m+i-6}!]^{\sigma}}   \inner{    m^{1 / 2}  \norm{ \partial_t^i\partial_x^m \lambda(t)}_{L^2}+m^{1 / 2}  \norm{ \partial_t^i\partial_x^m \delta(t)}_{L^2}}\\
	   &\qquad +  \sup_{\stackrel{1\leq i\leq 4}{ m+i\geq 6}}   \frac{ \rho^{m+i-6}}{ [\inner{m+i-6}!]^{\sigma}}  \inner{m\norm{  \comi z^\ell \partial_t^i\partial_x^m\xi (t)}_{L^2}+m\norm{  \comi z^\ell \partial_t^i\partial_x^m\eta(t) }_{L^2}}  
	 \\
 		&\qquad +\sup_{\stackrel{1\leq i \leq 4}{ m+i \leq 6}}    \inner{     \norm{ \partial_t^i \partial_x^m u(t)}_{L^2}+   \norm{ \partial_t^i \partial_x^m f(t)}_{L^2}}+	\sup_{\stackrel{1\leq i\leq 4}{ m+i\leq 5}}     \norm{\partial_t^i\partial_x^m  \mathcal U(t)}_{L^2}      
	 		   \\
 		& \qquad+	\sup_{\stackrel{1\leq i\leq 4}{ m+i\leq 5}}     \big(     \norm{  \partial_t^i\partial_x^m\lambda }_{L^2}  +   \norm{  \partial_t^i\partial_x^m\delta }_{L^2}+  \norm{  \comi z^\ell \partial_t^i\partial_x^m\xi }_{L^2}+ \norm{ \comi z^\ell \partial_t^i \partial_x^m\eta }_{L^2}  
	 		   \big)\\
 		 &\leq  C\inner{\norm{(u_0,f_0)}_{2\rho_0,\sigma,8}^2+\norm{(u_0,f_0)}_{2\rho_0,\sigma,8}^4}		 +e^{CC_*^2} \bigg(    \int_0^{t}  \big(\abs{\vec a(s)}_{\rho,\sigma}^2+\abs{\vec a(s)}_{\rho,\sigma}^4 \big)ds +
 		  \int_0^{t}  \frac{  \abs{\vec a(s)}_{ \tilde\rho,\sigma}^2}{\tilde\rho-\rho}\,ds\bigg).
 		  \end{aligned}
 	\end{eqnarray*} 
 \end{proposition}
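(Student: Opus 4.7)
\medskip

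\noindent\textbf{Proof proposal.}  The plan is to reduce Proposition \ref{lemtime} to a $\partial_t^i$-differentiated version of the four estimates already proved in Propositions \ref{prpu}, \ref{propuv++++}, \ref{prp+3} and \ref{prla}. The key observation is that although the norm $|\vec a|_{\rho,\sigma}$ mixes a time derivative index $i$ with a tangential index $m$, the pair $(\partial_t^i\partial_x^m u,\partial_t^i\partial_x^m f,\ldots)$ plays exactly the same structural role as $(\partial_x^{m+i}u,\partial_x^{m+i}f,\ldots)$ once we agree to measure Gevrey regularity with the combined index $m+i$. Since $\partial_t$ commutes with every spatial differential operator appearing in the system \eqref{mhd+} and in the defining relations \eqref{mau}--\eqref{varpsi}, applying $\partial_t^i\partial_x^m$ to those equations produces an evolution equation for $\partial_t^i\partial_x^m u$ (and similarly for the other components of $\vec a$) whose right-hand side has exactly the same algebraic shape as in the pure-tangential case, only with extra binomial sums in $i$ accompanying those in $m$.

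First I would treat the initial-time contributions. For $i\geq 1$ the quantity $\partial_t^i\partial_x^m u(0)$ is not given by the data, so I would use the momentum equation $\partial_t u=\nu\partial_z^2 u-(u\partial_x+w\partial_z)u+(f\partial_x+h\partial_z)f$ (and its magnetic analogue) to rewrite $\partial_t^i$ inductively as a polynomial in $\partial_x,\partial_z$ of combined order $\leq 2i\leq 8$, acting on $u_0,f_0$ (and their non-local $z$-integrals defining $w_0,h_0$). Counting derivatives, a term of type $\partial_t^i\partial_x^m$ becomes a sum of products of up to $O(1)$ factors, each carrying at most $m+2i$ tangential and $8$ normal derivatives of $u_0,f_0$. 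A standard Gevrey/product argument as in the proof of Lemma \ref{lefx}, combined with the initial datum being in $X_{2\rho_0,\sigma,8}$, shows that these initial contributions are bounded by $C[(m+i-7)!]^{2\sigma}\rho^{-2(m+i-7)}\bigl(\|(u_0,f_0)\|_{2\rho_0,\sigma,8}^2+\|(u_0,f_0)\|_{2\rho_0,\sigma,8}^4\bigr)$, after using the inequality $(2\rho_0)^{-(m+i-7)}\leq 2^{-(m+i-7)}\rho^{-(m+i-7)}$. This is where the requirement $\ell+j$ up to $j=8$ for the initial data is essential.

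Next, for $t>0$, I would apply $\partial_t^i\partial_x^m$ to each of the master equations already used: equation \eqref{mau} for $\mathcal U$, the $u$- and $f$-equations in \eqref{mhd+}, the $\xi,\eta$-system \eqref{sypv}, and the $\lambda$-equation derived in Subsection \ref{sublam}. After commutation, the extra commutator terms are binomial sums of products $(\partial_t^{i_1}\partial_x^{m_1}\cdot)(\partial_t^{i_2}\partial_x^{m_2}\cdot)$ with $i_1+i_2=i$, $m_1+m_2=m$; splitting as in \eqref{feu} via $L^\infty$ vs.\ $L^2_x(L^\infty_z)$ and invoking the elementary combinatorial identity $[(m+i-7)!]/[(m_1+i_1-k)!(m_2+i_2-k')!]\leq C(m+i)^{\cdots}$ together with \eqref{factor}, one reproduces verbatim the estimates of Lemmas \ref{lefx}--\ref{++lex+++}. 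The same $\psi_m$--trick (subtracting $(\partial_t^i\partial_zu)\int_0^z\partial_t^i\partial_x^{m-1}\mathcal U\,d\tilde z$) eliminates the loss of $x$-derivative in $\partial_t^i\partial_x^m w$, and the $(\xi,\eta)$-cancellation in \eqref{sypv} survives $\partial_t^i$ untouched because $\partial_t$ commutes with the transport structure and with the divergence-free constraints. Collecting these four reproduced estimates yields simultaneous bounds of the same type as \eqref{aes}, uniformly for $1\leq i\leq 4$.

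The main obstacle I anticipate is bookkeeping rather than analysis: one must verify that the bounds on the auxiliary objects $\mathcal U_i:=\partial_t^i\mathcal U$, $\lambda_i:=\partial_t^i\lambda$, $\delta_i:=\partial_t^i\delta$ follow a closed system after $\partial_t^i$-differentiation, with no new loss-of-derivative term appearing beyond those already tamed by the $\mathcal U,\lambda,\xi$ hierarchy of fractional weights $1,m^{1/2},m$ in \eqref{devea}. Since \eqref{ulm} is preserved under $\partial_t^i$ (it is itself a time-independent identity), the same fractional balancing works with $m+i-6$ replacing $m-6$; the commutator contributions between $\partial_t^i$ and the transport/dissipation operators $\partial_t+u\partial_x+w\partial_z-\nu\partial_z^2$ are again of lower total order and can be absorbed either via \eqref{condi1} (for small $i_1$) or via $|\vec a|_{\tilde\rho,\sigma}/(\tilde\rho-\rho)$ (for large $i_1$), exactly matching the structure of the right-hand side of \eqref{aes}. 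Once all pieces are in place, summing up the bounds on each component produces the claimed estimate in Proposition \ref{lemtime}.
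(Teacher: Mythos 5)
Your proposal takes essentially the same route as the paper: the paper's own proof of Proposition \ref{lemtime} is precisely a remark that the arguments of Subsections \ref{subu}--\ref{sublam} go through with $\partial_x^m$ replaced by $\partial_t^i\partial_x^m$, and that the only new ingredient is expressing $\partial_t^i(u,f)|_{t=0}$ for $1\leq i\leq 4$ via the evolution equations (hence needing up to eight normal derivatives of the data, which is why $(u_0,f_0)\in X_{2\rho_0,\sigma,8}$). Your treatment of the initial-time terms and of the extra binomial sums in $i$ matches the paper's intent, so the proposal is correct.
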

 
 \begin{proof}
 	The proof is similar as those in the previous Subsections \ref{subu}-\ref{sublam}, with the tangential derivatives $\partial_x^m$  replaced by $\partial_t^i\partial_x^m$.  The main difference arises from the    initial data.   Note that
 	\begin{eqnarray*}
 		\partial_t u|_{t=0}=\nu\partial_z^2u_0-u_0\partial_xu_0-w_0\partial_zu_0+f_0\partial_xf_0+h_0\partial_zf_0
 	\end{eqnarray*}
 	with $w_0=-\int_0^z\partial_xu_0d\tilde z$ and $h_0=-\int_0^z\partial_xf_0d\tilde z$.
 	Similar expressions hold for   $\partial_t^i u|_{t=0}$,  $2\leq i\leq 4$ in terms of $u_0$ and $f_0$.   Thus we can control the terms $\partial_t^i u|_{t=0}, 1\leq i\leq 4,$ by the initial data $u_0$ and $f_0$ if
 the  normal derivatives of $u_0$ and  $f_0$ are up to the 8th order  rather than 4-th.    Other than the difference
 in the order of differentiation on the initial data,  there is  no essential difference from the argument in the previous subsections on tangential detivatives. Hence,   we omit the detail of the  proof.
 \end{proof}
 
 \subsection{Normal derivatives of $u$ and $f$} \label{secno}

 It remains to estimate the  normal derivatives of $u$ and $f$ in the a priori estimate
 and it is given in the following proposition.
 
  \begin{proposition}  \label{lemnor}
  Under Assumption \ref{assmain} we have,  
 	  for any    $t\in[0,T]$ and  
  any pair $\inner{\rho,\tilde\rho}$ with $0<\rho<\tilde\rho<\rho_0\leq 1$,
   	 \begin{multline*}	
  \sup_{\stackrel{1\leq i+ j\leq 4}{ m+i+j \geq 7}}  \frac{\rho^{2(m+i+j-7)}} {  [(m+i+j-7)!]^{2\sigma}  }  	\norm{\comi z^{\ell+j}\partial_t^i\partial_x^m \partial_z^ju (t)}_{L^2}^2\\
  +\sup_{\stackrel{1\leq i+ j\leq 4}{ m+i+j \geq 7}}  \frac{\rho^{2(m+i+j-7)}} {  [(m+i+j-7)!]^{2\sigma}  }  \int_0^t\norm{\comi z^{\ell+j}\partial_t^i\partial_x^m \partial_z^{j+1}u (s)}_{L^2}^2ds 
  +\sup_{\stackrel{1\leq i+j\leq 4} {m+i+j \leq 6}}   \norm{\comi z^{\ell+j}\partial_t^i\partial_x^m \partial_z^ju (t)}_{L^2}^2  \\	
 		 \leq  C\inner{\norm{(u_0,f_0)}_{2\rho_0,\sigma,8}^2+\norm{(u_0,f_0)}_{2\rho_0,\sigma,8}^4}
 		 +e^{CC_*^2} \bigg(    \int_0^{t}  \big(\abs{\vec a(s)}_{\rho,\sigma}^2+\abs{\vec a(s)}_{\rho,\sigma}^4 \big)ds +
 		  \int_0^{t}  \frac{  \abs{\vec a(s)}_{ \tilde\rho,\sigma}^2}{\tilde\rho-\rho}\,ds\bigg),
 	\end{multline*} 
 	where $C_*\geq 1$ is the constant given in \eqref{condi1}.
 The above estimate also holds when $\comi z^{\ell+j}\partial_t^i\partial_x^m \partial_z^ju$ is  replaced by $\comi z^{\ell+j}\partial_t^i\partial_x^m \partial_z^jf$.
 \end{proposition}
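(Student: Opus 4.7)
The strategy I would adopt is to eliminate normal derivatives in favor of time and tangential derivatives by using the parabolic equations themselves, since the bounds for time and tangential derivatives have already been established in Propositions \ref{propuv++++}, \ref{prp+3}, \ref{prla}, and \ref{lemtime}. From \eqref{mhd+} we have the pointwise identity
\begin{equation*}
\nu \partial_z^2 u = \partial_t u + u \partial_x u + w \partial_z u - \xi,\qquad \mu \partial_z^2 f = \partial_t f + u \partial_x f + w \partial_z f - \eta,
\end{equation*}
so each application of $\partial_z^2$ trades one order of normal differentiation for either one order of time differentiation or one order of tangential differentiation (plus nonlinear products with strictly lower normal derivatives). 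This suggests a downward induction on $j$: starting from a pair $(i,j)$ with $j\geq 2$, apply $\partial_t^i\partial_x^m\partial_z^{j-2}$ to the equation above and expand by the Leibniz formula to express $\partial_t^i\partial_x^m\partial_z^j u$ in terms of $\partial_t^{i+1}\partial_x^m\partial_z^{j-2}u$, products of lower-order derivatives of $u,w$, and $\partial_t^i\partial_x^m\partial_z^{j-2}\xi$ (and analogously for $f$ and $\eta$).

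For the base cases I would distinguish parities. When $j=2$ or $j=4$ (even), the inductive step lands on $j-2\in\{0,2\}$, where the $j=0$ bound is Proposition \ref{propuv++++} (tangential and time derivatives combined with Proposition \ref{lemtime}) and the $\xi,\eta$ contributions are furnished by Proposition \ref{prp+3}. For odd $j\in\{1,3\}$, I would first establish the $j=1$ estimate via a weighted integration by parts in $z$: since $u|_{z=0}=0$ and $\partial_z f|_{z=0}=0$, both boundary terms vanish, giving
\begin{equation*}
\bigl\|\langle z\rangle^{\ell+1}\partial_z(\partial_t^i\partial_x^m u)\bigr\|_{L^2}^2 \leq C\bigl\|\langle z\rangle^{\ell+1}\partial_t^i\partial_x^m u\bigr\|_{L^2}\bigl\|\langle z\rangle^{\ell+1}\partial_z^2\partial_t^i\partial_x^m u\bigr\|_{L^2} + \text{lower order},
\end{equation*}
which reduces $j=1$ to $j=0$ and $j=2$ cases already in hand; an analogous step handles $j=3$ via the equation-based reduction to $j=1$.

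The Leibniz expansions of the nonlinear terms $u\partial_x u$, $w\partial_z u$ (and their magnetic counterparts) would be split into low-high and high-low by the index cutoff $[m/2]$, exactly as in the proofs of Lemmas \ref{lefx}--\ref{lex+++}. On the low-order side I would use the pointwise bound \eqref{condi1} together with the Sobolev embeddings cited there, while the high-order side is controlled by $|\vec a|_{r,\sigma}$; the weight-gain in $w=-\int_0^z\partial_x u\,d\tilde z$ is absorbed by the assumption that $\ell>1/2$, so that $\|\partial_x^j w\|_{L^\infty_z}\leq C\|\langle z\rangle^\ell \partial_x^{j+1}u\|_{L^2_z}$. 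The Gevrey combinatorics reduce to sums of the form $\sum j^{-5}(m-j)^{-5}[(m-6)!]^\sigma/\rho^{m-6}$ already handled by the Stirling-type inequality used in Lemma \ref{lefx} at $\sigma=3/2$, and the loss of one tangential derivative appearing in the cross term $(\partial_x^{m+1}\xi)$ factor is handled exactly as in Proposition \ref{prla} using \eqref{factor}.

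The main obstacle, as in the tangential case, is the precise combinatorial accounting with the weight factor $\langle z\rangle^{\ell+j}$ growing with $j$: each time we differentiate in $z$ and distribute the weight by the Leibniz rule, we produce terms with a lower-order weight on an extra derivative of $u$ or $f$, and one must verify that these never exceed the a priori norm at the next-lower $j$. Since the induction proceeds downward in $j$, this is consistent, but care is needed to ensure that the constant blows up only as $e^{CC_*^2}$ rather than accumulating additional powers of $C_*$ at each step — as in Proposition \ref{lemtime}, a finite induction of depth at most four keeps the constants controlled, and the initial datum contribution is absorbed by the fact that the hypothesis $(u_0,f_0)\in X_{2\rho_0,\sigma,8}$ provides eight normal derivatives, exactly matching the maximum order $m+i+j\leq$ (small) $+$ $j\leq 4$ plus Leibniz expansions at time $t=0$ via the equation.
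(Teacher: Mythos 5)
Your strategy is genuinely different from the paper's. The paper proves this proposition by a parabolic energy estimate: it applies $\comi z^{\ell+j}\partial_t^i\partial_z^j$ directly to the evolution equation for $u$ (and for $f$), takes the $L^2$ inner product with $\comi z^{\ell+j}\partial_t^i\partial_x^m\partial_z^j u$, and then must confront two things your reduction would bypass: the nonzero boundary term $\nu\int_{\mathbb R}\big(\partial_t^i\partial_x^m\partial_z^j u\big)\big(\partial_t^i\partial_x^m\partial_z^{j+1}u\big)\big|_{z=0}\,dx$ arising from integrating the dissipation by parts (they dispose of it by substituting the equation at $z=0$ to lower the order), and the unbounded transport term $(f\partial_x+h\partial_z)\comi z^{\ell+j}\partial_t^i\partial_x^m\partial_z^j f$, which is only controlled after summing the $u$- and $f$-estimates and invoking the skew-symmetry of $f\partial_x+h\partial_z$. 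Your proposal instead uses the equation algebraically as an ``elliptic'' identity to convert $\partial_z^2$ into $\partial_t$ plus lower-order products, which, if it closed, would avoid boundary terms and the cancellation mechanism entirely. That is a real simplification worth noting.

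However there is a concrete gap in your reduction, and you flag it yourself but dismiss it too quickly: the weight $\comi z^{\ell+j}$ is calibrated to the number $j$ of normal derivatives, and your trade destroys that calibration. Multiplying $\nu\partial_z^j u = \partial_t\partial_z^{j-2}u+\partial_z^{j-2}(u\partial_xu+w\partial_zu-\xi)$ by $\comi z^{\ell+j}$ produces, among others, the quantity $\norm{\comi z^{\ell+j}\partial_t^{i+1}\partial_x^m\partial_z^{j-2}u}_{L^2}$. Your induction (or Proposition~\ref{lemtime} for the base case $j-2=0$) only controls $\norm{\comi z^{\ell+(j-2)}\partial_t^{i+1}\partial_x^m\partial_z^{j-2}u}_{L^2}$, which carries a weight two powers \emph{weaker} than what you need, and nothing in the a~priori norm supplies the extra $\comi z^{2}$. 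The same mismatch already appears at $j=2$, $i=0$: you would need $\norm{\comi z^{\ell+2}\partial_t\partial_x^m u}_{L^2}$ whereas Proposition~\ref{lemtime} only gives $\norm{\comi z^{\ell}\partial_t\partial_x^m u}_{L^2}$. (Your $j=1$ interpolation does track weights correctly --- $\norm{\comi z^{\ell+1}\partial_z u}_{L^2}^2 \lesssim \norm{\comi z^{\ell+2}\partial_z^2 u}_{L^2}\norm{\comi z^{\ell} u}_{L^2}+\dots$ --- but this does not rescue the $j\ge 2$ step.) The energy-method route in the paper sidesteps this precisely because one never moves $\partial_z^2$ to the other side: the weight $\comi z^{\ell+j}$ rides along with $\partial_z^j$ throughout, and the only $z$-derivative that appears with an ``extra'' order is $\partial_z^{j+1}u$, which carries the \emph{smaller} weight $\comi z^{\ell+j}$ and is absorbed into the dissipation term. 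To repair your argument you would either need stronger weight control in the lower-order estimates, or you would be pushed back toward the paper's energy approach.
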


 \begin{proof}
 We apply induction on $j$, the order of normal derivatives. The validity for $j=0$ follows from Proposition \ref{lemtime}.   Now for given $j\geq 1$ and for any    $i$ and $m$,  suppose the estimate
  	 \begin{equation}	
  	 \label{ina}
  	 \begin{aligned}
  &\norm{\comi z^{\ell+k}\partial_t^i\partial_x^m \partial_z^ku (t)}_{L^2}^2+\norm{\comi z^{\ell+k}\partial_t^i\partial_x^m \partial_z^kf (t)}_{L^2}^2\\
  &\quad+\int_0^t \inner{ \norm{\comi z^{\ell+k}\partial_t^i\partial_x^m \partial_z^{k+1}u (s)}_{L^2}^2+ \norm{\comi z^{\ell+k}\partial_t^i\partial_x^m \partial_z^{k+1}f (s)}_{L^2}^2}ds  \\	
 		& \leq  C\frac{  [(m+i+k-7)!]^{2\sigma}  }{\rho^{2(m+i+k-7)}}  \inner{\norm{(u_0,f_0)}_{2\rho_0,\sigma,8}^2+\norm{(u_0,f_0)}_{2\rho_0,\sigma,8}^4}\\
 		 &\quad+e^{CC_*^2}\frac{  [(m+i+k-7)!]^{2\sigma}  }{\rho^{2(m+i+k-7)}}    \bigg(    \int_0^{t}  \big(\abs{\vec a(s)}_{\rho,\sigma}^2+\abs{\vec a(s)}_{\rho,\sigma}^4 \big)ds +
 		  \int_0^{t}  \frac{  \abs{\vec a(s)}_{ \tilde\rho,\sigma}^2}{\tilde\rho-\rho}\,ds\bigg) 
 		  \end{aligned}
 	\end{equation} 
 	holds for  any $k\leq j-1$ with $ i+ k\leq 4$ and   $m+i+k\geq 7$, 
 	 we will show the above estimate  holds for $k=j$.  To do so, applying $\comi z^{\ell+j}\partial_t^i \partial_z^j$, $i+j\leq 4,$ to    equation \eqref{zu}  and  observing  
 \begin{eqnarray*}
 \partial_x^m \xi=(f\partial_x +h\partial_z )\partial_x^m f+\underbrace{ \sum_{1\leq k\leq m}{m\choose k}\Big[(\partial_x^k f)\partial_x^{m-k+1}f+  (\partial_x^k h)\partial_x^{m-k}\partial_zf\Big]}_{\stackrel{\rm def}{=} H_m},
 \end{eqnarray*}
 we obtain, with $H_m$ defined above,   
  \begin{eqnarray*}
  \begin{aligned}
 &\big(\partial_t+u\partial_x +w\partial_z-\nu\partial_z^2\big)  \comi z^{\ell+j} \partial_t^i \partial_x^m  \partial_z^j u=(f\partial_x+h\partial_z) \comi z^{\ell+j} \partial_t^i \partial_z^j \partial_x^m f + \comi z^{\ell+j} \partial_t^i\partial_z^j H_{m}\\
&\quad  +\big[ \comi z^{\ell+j}\partial_t^i\partial_z^j, \ f\partial_x +h\partial_z\big]  \partial_x^m f 
   -  \comi z^{\ell+j} \partial_t^i\partial_z^j\big[ (\partial_x^mw)\partial_z u \big]\\ 
&\quad + \comi z^{\ell+j}\partial_t^i \partial_z^j F_{m}+\big[u\partial_x +w\partial_z-\nu\partial_z^2,  \ \comi z^{\ell+j}\partial_t^i\partial_z^j \big]  \partial_x^m u,
 \end{aligned}
\end{eqnarray*}
 where
  $F_m$ is defined in \eqref{zu} and $[T_1,T_2]=T_1T_2-T_2T_1$ stands for the commutator of two operators $T_1, T_2$.  
  Following the argument used in the proof of Lemma \ref{lefx} (see also the proof of \cite[Lemma 4.3]{lmy}), we have
    \begin{multline*} 
\Big(- \comi z^{\ell+j} \partial_t^i\partial_z^j\big[ (\partial_x^mw)\partial_z u \big]+\comi z^{\ell+j} \partial_t^i\partial_z^j F_{m},\ \comi z^{\ell+j}  \partial_t^i\partial_x^m  \partial_z^j u\Big)_{L^2}\\
 +\Big(  \big[u\partial_x+ w\partial_z-\nu\partial_z^2,  \ \comi z^{\ell+j}\partial_t^i\partial_z^j \big]  \partial_x^m u,\ \comi z^{\ell+j}  \partial_t^i\partial_x^m  \partial_z^j u\Big)_{L^2}\\
 	\leq \frac{\nu}{4}\big\|\partial_z\big(\comi z^{\ell+j}  \partial_t^i\partial_x^m  \partial_z^j u\big)\big\|_{L^2}^2+ C\frac{   [\inner{m+i+j-7}!]^{2\sigma}}{\rho^{2(m+i+j-7)}}   \big(\abs{\vec a}_{ \rho,\sigma}^2+   \abs{\vec a}_{ \rho,\sigma}^4 \big),
 \end{multline*}  
 and
 \begin{multline*}
 \Big( \comi z^{\ell+j} \partial_t^i\partial_z^j H_{m},\ \comi z^{\ell+j}  \partial_t^i\partial_x^m  \partial_z^j u\Big)_{L^2}
 +\Big(  \big[f\partial_x+ h\partial_z,  \ \comi z^{\ell+j}\partial_t^i\partial_z^j \big]  \partial_x^m f,\ \comi z^{\ell+j}  \partial_t^i\partial_x^m  \partial_z^j u\Big)_{L^2}\\
 	\leq \frac{\nu}{4}\big\|\partial_z\big(\comi z^{\ell+j}  \partial_t^i\partial_x^m  \partial_z^j u\big)\big\|_{L^2}^2+ C\frac{   [\inner{m+i+j-7}!]^{2\sigma}}{\rho^{2(m+i+j-7)}}   \big(\abs{\vec a}_{ \rho,\sigma}^2+   \abs{\vec a}_{ \rho,\sigma}^4 \big).	
 \end{multline*}
  Hence, combining the above inequalities gives
 \begin{equation} \label{elb}
 \begin{aligned}
 	&\frac{1}{2}\frac{d}{dt}\norm{\comi z^{\ell+j} \partial_t^i \partial_x^m  \partial_z^j u}_{L^2}^2+\frac{\nu}{2}\big\|\partial_z\big(\comi z^{\ell+j} \partial_t^i \partial_x^m  \partial_z^j u\big)\big\|_{L^2}^2 \\
& \leq  \Big|\nu \int_{\mathbb R} \big( \partial_t^i \partial_x^m  \partial_z^j u\big)   \big(\partial_t^i\partial_x^m  \partial_z^{j+1} u\big)\big|_{z=0}  dx \Big| +C\frac{   [\inner{m+i+j-7}!]^{2\sigma}}{\rho^{2(m+i+j-7)}}   (\abs{\vec a}_{ \rho,\sigma}^2+   \abs{\vec a}_{ \rho,\sigma}^4)\\
&\quad+\Big( (f\partial_x+h\partial_z) \comi z^{\ell+j} \partial_t^i \partial_x^m\partial_z^j  f ,\ \comi z^{\ell+j} \partial_t^i \partial_x^m  \partial_z^j u\Big)_{L^2}.
\end{aligned}
 \end{equation}
For the boundary term, since  $\norm{\omega}_{L^\infty(\mathbb R_+)}^2\leq 2\norm{\partial_z\omega}_{L^2(\mathbb R_+)} \norm{ \omega}_{L^2(\mathbb R_+)} $ if $\omega\rightarrow 0$ as $z\rightarrow+\infty$ and 
\begin{eqnarray*}
	\nu\partial_t^i\partial_x^m  \partial_z^{j+1} u|_{z=0} 
	&=&\partial_t^{i+1}\partial_x^m  \partial_z^{j-1} u|_{z=0} 
	+\partial_t^{i}\partial_x^m  \partial_z^{j-1} \big(u\partial_x u+w\partial_zu-f\partial_xf-h\partial_zf\big)\big|_{z=0}\\
	&=&\partial_t^{i+1}\partial_x^m  \partial_z^{j-1} u|_{z=0} 
	+\partial_t^{i}\partial_x^m  \partial_z^{j-1} \big(u\partial_x u -f\partial_xf\big)\big|_{z=0}\\
	&&+  \sum_{k=1}^{j-1}{{j-1}\choose k}\partial_t^{i}\partial_x^m\Big(  ( \partial_z^{k} w)\partial_z^{j-k} u-( \partial_z^{k}h)\partial_z^{j-k}f\Big)\Big|_{z=0},
\end{eqnarray*}
we have
\begin{eqnarray*}
\begin{aligned}
	& \nu^2\frac{\rho}{m^{\sigma}}\norm{    \big(\partial_t^i\partial_x^m  \partial_z^{j+1} u\big) |_{z=0} }_{L_x^2}^2
\leq  C \frac{\rho }{m^{\sigma}}\norm{  \partial_t^{i+1}\partial_x^m  \partial_z^{j-1} u }_{L^2} \norm{  \partial_t^{i+1}\partial_x^m  \partial_z^{j} u }_{L^2} \\
& \quad  +C \frac{\rho }{m^{\sigma}}\norm{  \partial_t^{i}\partial_x^m  \partial_z^{j-1} \big(u\partial_x u -f\partial_xf\big) }_{L^2} \norm{ \partial_t^{i}\partial_x^m  \partial_z^{j} \big(u\partial_x u -f\partial_xf\big)}_{L^2} \\
& \quad  +C\sum_{k=1}^{j-1}  \frac{\rho }{m^{\sigma}}\norm{ \partial_t^{i}\partial_x^m\big( ( \partial_z^{k} w)\partial_z^{j-k} u-( \partial_z^{k}h)\partial_z^{j-k}f\big) }_{L^2} \times \norm{\partial_z \partial_t^{i}\partial_x^m\big( ( \partial_z^{k} w)\partial_z^{j-k} u-( \partial_z^{k}h)\partial_z^{j-k}f\big)}_{L^2} \\
&\leq  \norm{  \partial_t^{i+1}\partial_x^m  \partial_z^{j} u }_{L^2}^2+C\frac{   [\inner{m+i+j-7}!]^{2\sigma}}{\rho^{2(m+i+j-7)}}     \inner{\abs{\vec a}_{ \rho,\sigma}^2+\abs{\vec a}_{ \rho,\sigma}^4},
\end{aligned}
\end{eqnarray*}
where we have used again the argument similar to  the proof of Lemma \ref{lefx}, and the estimate for $p+q\leq 4$,
\begin{eqnarray*}
	 \norm{\comi z^{\ell+q}\partial_t^p\partial_x^m\partial_z^qu}_{L^2}+\norm{\comi z^{\ell+q}\partial_t^p\partial_x^m\partial_z^q f}_{L^2}
	 \leq 
	  \frac{   [\inner{m+p+q-7}!]^{ \sigma}}{\rho^{ (m+p+q-7)}}\abs{\vec a}_{\rho,\sigma}.
\end{eqnarray*}
Moreover,
\begin{eqnarray*}
	    \frac{m^{\sigma}}{\rho}\norm{ \big( \partial_t^i \partial_x^m  \partial_z^j u\big)   |_{z=0} }_{L_x^2}^2  &
	    \leq &C \frac{m^{\sigma}}{\rho }\norm{ \partial_t^i \partial_x^m  \partial_z^j u }_{L^2}\norm{ \partial_t^i \partial_x^m  \partial_z^{j+1} u }_{L^2} \\
 &\leq & \frac{\nu}{4} \norm{ \partial_t^i \partial_x^m  \partial_z^{j+1} u }_{L^2}^2+C\frac{m^{2\sigma}}{\rho^2 }\norm{ \partial_t^i \partial_x^m  \partial_z^j u }_{L^2}^2. 
\end{eqnarray*}
Hence,
\begin{multline*}
 \Big|\nu \int_{\mathbb R} \big( \partial_t^i \partial_x^m  \partial_z^j u\big)   \big(\partial_t^i\partial_x^m  \partial_z^{j+1} u\big)\big|_{z=0}  dx \Big|  
 \leq  \frac{\nu}{4} \norm{ \partial_t^i \partial_x^m  \partial_z^{j+1} u }_{L^2}^2+\norm{  \partial_t^{i+1}\partial_x^m  \partial_z^{j} u }_{L^2}^2\\
 +C\frac{m^{2\sigma}}{\rho^2 }\norm{ \partial_t^i \partial_x^m  \partial_z^j u }_{L^2}^2+C\frac{   [\inner{m+i+j-7}!]^{2\sigma}}{\rho^{2(m+i+j-7)}} \inner{    \abs{\vec a}_{ \rho,\sigma}^2+\abs{\vec a}_{ \rho,\sigma}^4},
\end{multline*}
which together with  \eqref{elb} implies
\begin{eqnarray*}
	 \begin{aligned}
 	&\norm{\comi z^{\ell+j} \partial_t^i \partial_x^m  \partial_z^j u(t)}_{L^2}^2+ \frac{\nu}{2} \int_0^t\norm{ \comi z^{\ell+j} \partial_t^i \partial_x^m  \partial_z^{j+1} u(s) }_{L^2}^2ds \\
& \leq \norm{\comi z^{\ell+j} \partial_t^i \partial_x^m  \partial_z^j u(0)}_{L^2}^2+ C\frac{m^{2\sigma}}{\rho^2 }\int_0^t \norm{ \partial_t^i \partial_x^m  \partial_z^j u(s)}_{L^2}^2ds+\int_0^t \norm{ \partial_t^{i+1} \partial_x^m  \partial_z^j u(s)}_{L^2}^2ds \\
&\quad+C\frac{   [\inner{m+i+j-7}!]^{2\sigma}}{\rho^{2(m+i+j-7)}}  \int_0^t (\abs{\vec a(s)}_{ \rho,\sigma}^2+   \abs{\vec a(s)}_{ \rho,\sigma}^4)ds+\Big( (f\partial_x+h\partial_z) \comi z^{\ell+j} \partial_t^i \partial_x^m\partial_z^j  f ,\ \comi z^{\ell+j} \partial_t^i \partial_x^m  \partial_z^j u\Big)_{L^2} \\
& \leq C\frac{  [(m+i+j-7)!]^{2\sigma}  }{\rho^{2(m+i+j-7)}}  \inner{\norm{(u_0,f_0)}_{2\rho_0,\sigma,8}^2+\norm{(u_0,f_0)}_{2\rho_0,\sigma,8}^4}\\
&\quad  +e^{CC_*^2}\frac{  [(m+i+j-7)!]^{2\sigma}  }{\rho^{2(m+i+j-7)}}    \bigg(    \int_0^{t}  \big(\abs{\vec a(s)}_{\rho,\sigma}^2+\abs{\vec a(s)}_{\rho,\sigma}^4 \big)ds +
 		  \int_0^{t}  \frac{  \abs{\vec a(s)}_{ \tilde\rho,\sigma}^2}{\tilde\rho-\rho}\,ds\bigg) \\
&\quad+\Big( (f\partial_x+h\partial_z) \comi z^{\ell+j} \partial_t^i \partial_x^m  \partial_z^j f ,\ \comi z^{\ell+j} \partial_t^i \partial_x^m  \partial_z^j u\Big)_{L^2},
\end{aligned}
\end{eqnarray*}
where in the last inequality we have used  the induction assumption \eqref{ina}. Similarly,
\begin{eqnarray*}
	 \begin{aligned}
 	&\norm{\comi z^{\ell+j} \partial_t^i \partial_x^m  \partial_z^j f(t)}_{L^2}^2+ \frac{\mu}{2}\int_0^t\norm{ \comi z^{\ell+j} \partial_t^i \partial_x^m  \partial_z^{j+1} f(s)}_{L^2}^2ds \\
& \leq C\frac{  [(m+i+j-7)!]^{2\sigma}  }{\rho^{2(m+i+j-7)}}  \inner{\norm{(u_0,f_0)}_{2\rho_0,\sigma,8}^2+\norm{(u_0,f_0)}_{2\rho_0,\sigma,8}^4}\\
&\quad   +e^{CC_*^2}\frac{  [(m+i+j-7)!]^{2\sigma}  }{\rho^{2(m+i+j-7)}}    \bigg(    \int_0^{t}  \big(\abs{\vec a(s)}_{\rho,\sigma}^2+\abs{\vec a(s)}_{\rho,\sigma}^4 \big)ds +
 		  \int_0^{t}  \frac{  \abs{\vec a(s)}_{ \tilde\rho,\sigma}^2}{\tilde\rho-\rho}\,ds\bigg)\\
&\quad+\Big( (f\partial_x+h\partial_z) \comi z^{\ell+j} \partial_t^i \partial_x^m\partial_z^j  u ,\ \comi z^{\ell+j} \partial_t^i \partial_x^m  \partial_z^j f\Big)_{L^2}.
\end{aligned}
\end{eqnarray*}
Taking the summation of the above two estimates and noticing
\begin{multline*}
	\Big( (f\partial_x+h\partial_z) \comi z^{\ell+j} \partial_t^i \partial_x^m  \partial_z^j f ,\ \comi z^{\ell+j} \partial_t^i \partial_x^m  \partial_z^j u\Big)_{L^2}
	+\Big( (f\partial_x+h\partial_z) \comi z^{\ell+j} \partial_t^i  \partial_x^m \partial_z^j u ,\ \comi z^{\ell+j} \partial_t^i \partial_x^m  \partial_z^j f\Big)_{L^2}=0,
\end{multline*}
we show \eqref{ina} holds for $k=j$.   Thus  the proof of the proposition is  completed. 
   \end{proof}

 \subsection{2D MHD boundary layer}\label{subpr}
 
 By combining  the estimates in Propositions \ref{prpu}-\ref{prp+3} and \ref{prla}-\ref{lemnor},  we obtain the a priori estimate \eqref{aes} in Theorem \ref{apriori} that  enables us to prove the well-posedness of the MHD boundary layer system \eqref{mhd+}.  Precisely, for given intial datum $(u_0,f_0)\in X_{2\rho_0,\sigma,8}$, 
 as  in \cite[Section 7]{MR4055987},  we first construct 
 the  approximate solutions $(u_\eps,f_\eps)\in L^\infty\big([0,   T_\eps]; X_{3\rho_0/2, \sigma, 8}\big)$ to the regularized MHD boundary layer system
\begin{equation}
\label{repradtl}
\left\{
	\begin{aligned}
		&\big(\partial_t  +u_\eps\partial_x +w_\eps\partial_z -\eps\partial_x^2-\nu\partial_z^2\big) u_\eps=\xi_\eps,\\
		&\big(\partial_t  +u_\eps\partial_x +w_\eps\partial_z   -\eps\partial_x^2-\mu\partial_z^2\big) f_\eps= \eta_\eps,\\
		&\big(\partial_t  +u_\eps\partial_x +w_\eps\partial_z   -\eps\partial_x^2-\mu\partial_z^2\big) h_\eps= f_\eps\partial_xw_\eps-h_\eps\partial_xu_\eps,\\
		&\partial_xu_\eps+\partial_zw_\eps=\partial_xf_\eps+\partial_zh_\eps=0,\\
		& (u_\eps,w_\eps)|_{z=0}=(\partial_zf_\eps,h_\eps)|_{z=0}=(0,0), \quad (u_\eps, f_\eps)|_{z\rightarrow+\infty}=(0,0),\\
		& (u_\eps,f_\eps)|_{t=0}=(u_0, f_0),
	\end{aligned}
	\right.
\end{equation}
with $\xi_\eps=(f_\eps\partial_x+h_\eps\partial_z)f_\eps$ and $\eta_\eps=(f_\eps\partial_x+h_\eps\partial_z)u_\eps$. Then we derive a uniform estimate on the approximate solutions $(u_\eps, f_\eps)$  so that we can
take the $\eps\rightarrow 0$ to have existence of solution in a time interval independent of $\eps$.           
For this,  we  define $\vec a_\eps$  in the same way as   $\vec a$  given in Definition \ref{deoa},  with the functions  replaced accordingly by those derived from \eqref{repradtl}.   Then the a priori estimate \eqref{aes} in Theorem \ref{apriori}  holds with $\vec a$ replaced by $\vec a_\eps$.  Hence, we can derive,  repeating the argument in \cite[Section 6]{lmy}, the uniform upper bound with respect to $\eps$ of  the  approximate solutions  $(u_\eps,f_\eps)$ in $L^\infty\inner{[0,T]; X_{\rho,\sigma,4}}$ for some $0<\rho<\rho_0$ and some $T$ independent of $\eps$. 
By taking $\varepsilon\rightarrow 0$,  we conclude, by compactness argument that  the limit $(u,f)$ solves \eqref{mhd+}. The uniqueness of solution follows from a similar argument used in   \cite[Subsection 8.2]{MR4055987}.  Therefore, 
We complete the proof of  Theorem \ref{th2d} for $\sigma=3/2$.
We remark   that as in  \cite[Section 8]{2017ly},   it is straightforward to modify the proof for $1<\sigma<3/2$.

 \section{3D MHD boundary layer} \label{sec3d}
Now we consider the 3D MHD boundary layer and use $(u,v,w)$ and $(f,g,h)$ to denote velocity and magnetic fields respectively, and denote by $(x,y,z)$ the spatial variables in $\mathbb R_+^3$.  Then the
  MHD boundary layer system \eqref{3dimmhd} in three-dimensional space is
 \begin{equation}\label{fla}
	\left\{
	\begin{aligned}
		& \big(\partial_t+u\partial_x +v\partial_y +w\partial_z-\nu\partial_z^2\big)u-\big(f\partial_x+g\partial_y+h\partial_z)f =0,\\
		& \big(\partial_t+u\partial_x +v\partial_y +w\partial_z-\nu\partial_z^2\big)v-\big(f\partial_x+g\partial_y+h\partial_z)g =0,\\
		& \big(\partial_t+u\partial_x +v\partial_y +w\partial_z-\mu\partial_z^2\big)f-\big(f\partial_x+g\partial_y+h\partial_z)u=0,  \\
		& \big(\partial_t+u\partial_x +v\partial_y +w\partial_z-\mu\partial_z^2\big)g-\big(f\partial_x+g\partial_y+h\partial_z)v=0,  \\ 
		& \big(\partial_t+u\partial_x +v\partial_y +w\partial_z-\mu\partial_z^2\big) h=f\partial_xw+g\partial_yw-h\partial_xu-h\partial_y v,  		 
	\end{aligned}
	\right.
\end{equation}
 with the divergence free and initial-boundary conditions
\begin{eqnarray*}
\left\{
\begin{aligned}
&	\partial_xu+\partial_yv+\partial_zw=\partial_xf+\partial_yg+\partial_zh=0,\\
&(u,v,w)|_{z=0}=(\partial_zf,\partial_zg,h)_{z=0}=(0,0,0), \qquad (u,v,f,g)|_{z\rightarrow +\infty}=(0,0,0,0),\\
&(u,v)|_{t=0}=(u_0,v_0),\quad (f,g)|_{t=0}=(f_0,g_0).
\end{aligned}
\right.
\end{eqnarray*}
The proof of well-posedness of this system in Gevrey function space with index $3/2$
 is  similar to the proof for  2D case with slight modification.   Precisely,  instead of the scalar auxiliary functions $\mathcal U, \lambda$ and $\delta$ in \eqref{mau} and \eqref{lamd}  we introduce here  the vector-valued functions
  $\bm{\mathcal U}=(\mathcal U_1, \mathcal U_2)$,  $\bm{\lambda}=(\lambda_1,\lambda_2, \tilde \lambda_1, \tilde\lambda_2)$ and $\bm{\delta}=(\delta_1, \delta_2, \tilde \delta_1, \tilde\delta_2)$, where   
\begin{eqnarray*}
\left\{
 		\begin{aligned}
&   \big(\partial_t+u\partial_x+v\partial_y+w\partial_z-\nu\partial_z^2\big)    \int_0^z\mathcal U_1(t,x,y,\tilde z) d\tilde z  =  -\partial_x w(t,x,y,z),\\
& \big(\partial_t+u\partial_x+v\partial_y+w\partial_z-\nu\partial_z^2\big)    \int_0^z \mathcal U_2(t,x,y,\tilde z) d\tilde z  =  -\partial_y w(t,x,y,z),\\
& \mathcal U_j|_{t=0}=0, \quad \partial_z\mathcal U_j|_{z=0}=\mathcal U_j|_{z\rightarrow+\infty}=0,\quad j=1,2,
   \end{aligned}
   \right.
\end{eqnarray*}
and 
\begin{eqnarray*}
\left\{
\begin{aligned}
	&\lambda_1= \partial_x u-(\partial_z u)\int_0^z\mathcal U_1  d\tilde z,\qquad   \lambda_2= \partial_y u-(\partial_z u)\int_0^z \mathcal U_2  d\tilde z,\\
	&\tilde\lambda_1= \partial_x v-(\partial_z v)\int_0^z\mathcal U_1  d\tilde z,\qquad   \tilde\lambda_2= \partial_y v-(\partial_z v)\int_0^z \mathcal U_2  d\tilde z,\\
	&\delta_1= \partial_x f-(\partial_z f)\int_0^z\mathcal U_1  d\tilde z,\qquad   \delta_2= \partial_y f-(\partial_z f)\int_0^z \mathcal U_2  d\tilde z,\\
	&\tilde\delta_1= \partial_x g-(\partial_z g)\int_0^z\mathcal U_1  d\tilde z,\qquad   \tilde\delta_2= \partial_y g-(\partial_z g)\int_0^z \mathcal U_2  d\tilde z.
	\end{aligned}
	\right.
\end{eqnarray*}
Moreover,  corresponding to \eqref{varpsi}, set $\bm \xi=(\xi_1,\xi_2)$ and $\bm \eta=(\eta_1,\eta_2)$ by 
\begin{equation*} 
\left\{
\begin{aligned}
		&\xi_1=(f\partial_x+g\partial_y+h\partial_z)f, \quad \xi_2=(f\partial_x+g\partial_y+h\partial_z)g,\\
		&\eta_1=(f\partial_x+g\partial_y+h\partial_z)u, \quad \eta_2=(f\partial_x+g\partial_y+h\partial_z)v.
	\end{aligned}
	\right.
\end{equation*}
We remark that  as in the 2D case,  here we can also apply the cancellation mechanism so that the highest order term $\partial_x w$ does not appear in the evolution equations solved by $\xi_j,\eta_j, j=1,2$.    
With the above functions,  we  set accordingly   $\vec a=(u,v,f,g, \bm{\mathcal U}, \bm \lambda,\bm\delta,\bm\xi,\bm\eta)$  and define $\abs{\vec a}_{\rho,\sigma}$ in the same way as in Definition \ref{deoa} with the tangential derivative  $\partial_x^m$  replaced by 
  $\partial_x^{\alpha_1}\partial_y^{\alpha_2}$.
    Then the a priori estimate stated in Theorem \ref{apriori} also holds for the new function
    $\vec a$. For example,  we can repeat the argument used  in Subsections \ref{subu} to get the desired estimate on the $L^2$ norm of  $\partial_x^m \bm{\mathcal U}$ and $\partial_y^m\bm{\mathcal U}$, and the estimate for  $\partial_x^{\alpha_1}\partial_y^{\alpha_2}\bm{ \mathcal U}$ will follow from the inequality
\begin{eqnarray*}
	\norm{\partial_x^{\alpha_1}\partial_y^{\alpha_2} \bm{ \mathcal U}}_{L^2(\mathbb R_+^3)}\leq C \inner{\norm{\partial_x^{\alpha_1+\alpha_2}  \bm{ \mathcal U}}_{L^2(\mathbb R_+^3)}+\norm{\partial_y^{\alpha_1+\alpha_2} \bm{ \mathcal U}}_{L^2(\mathbb R_+^3)}}.
\end{eqnarray*}
Similar argument holds for  the estimates on the other  functions $\bm\lambda, \bm\delta, \cdots$.
  Again from the  a priori estimate we can derive the existence and uniqueness of solution to the 3D MHD boundary layer system \eqref{fla} with corresponding initial and boundary conditions.  
  Since there is no extra difficulty in the proof for the 3D case, we omit the detail  for brevity.

 \bigskip
 {\bf Acknowledgements}. The research of the first author was supported by NSFC (Nos. 11961160716, 11771342, 11871054) and the  Fundamental Research Funds for the Central Universities(No.2042020kf0210).    
   The research of the second author was supported by the General Research Fund of Hong Kong, CityU No.11304419. 
   

\end{document}